\documentclass[a4paper]{article}
\usepackage{amsmath,amsthm,amsfonts,thmtools}
\usepackage{amssymb}
\usepackage[cmintegrals]{newtxmath}
\usepackage{graphicx}
\usepackage{listings}
\usepackage[font=footnotesize]{caption}
\usepackage[font=scriptsize]{subcaption}
\usepackage{euscript}
\usepackage{units}
\usepackage{enumerate}
\usepackage{enumitem}
\usepackage{xcolor}
\usepackage{todonotes}
\usepackage{booktabs}
\usepackage{tikz}
\usepackage{comment}
 
\DeclareMathAlphabet{\pazocal}{OMS}{zplm}{m}{n}

\usepackage[bookmarks=true,hidelinks]{hyperref}
\usepackage{bbm}
\usepackage{multirow}

\usepackage{mathtools}
\usepackage{subcaption} 
\usepackage{graphicx}
\usepackage{pgfplots}
\usepackage{float}
\usepackage{algpseudocode}
\usepackage{bbold}
\usepackage{cite}
\usepackage{microtype}
\usepackage{rotating}
\usepackage{enumitem}
\usepackage{bm}
\usepackage{hyperref}

\textwidth=16cm \oddsidemargin=0cm \evensidemargin=0cm \textheight=23cm \voffset=-1cm

\numberwithin{equation}{section}
\newtheorem{theorem}{Theorem}[section]

\newtheorem{lemma}[theorem]{Lemma}

\newtheorem{assumption}[theorem]{Assumption}
\newtheorem{algorithm}[theorem]{Algorithm}
\newtheorem{definition}[theorem]{Definition}

\numberwithin{equation}{section}

\theoremstyle{definition}

\newtheoremstyle{myremarkstyle}{}{}{}{}{\bfseries}{.}{ }{}
\theoremstyle{myremarkstyle}
\declaretheorem[name=Remark,qed=$\blacksquare$,numberlike=theorem]{remark}



\makeatletter
\newcommand*{\intavg}{%
  \mint@l{-}{}%
}
\newcommand*{\mint@l}[2]{%
  \@ifnextchar\limits{%
    \mint@l{#1}%
  }{%
    \@ifnextchar\nolimits{%
      \mint@l{#1}%
    }{%
      \@ifnextchar\displaylimits{%
        \mint@l{#1}%
      }{%
        \mint@s{#2}{#1}%
      }%
    }%
  }%
}
\newcommand*{\mint@s}[2]{%
  \@ifnextchar_{%
    \mint@sub{#1}{#2}%
  }{%
    \@ifnextchar^{%
      \mint@sup{#1}{#2}%
    }{%
      \mint@{#1}{#2}{}{}%
    }%
  }%
}
\def\mint@sub#1#2_#3{%
  \@ifnextchar^{%
    \mint@sub@sup{#1}{#2}{#3}%
  }{%
    \mint@{#1}{#2}{#3}{}%
  }%
}
\def\mint@sup#1#2^#3{%
  \@ifnextchar_{%
    \mint@sub@sup{#1}{#2}{#3}%
  }{%
    \mint@{#1}{#2}{}{#3}%
  }%
}
\def\mint@sub@sup#1#2#3^#4{%
  \mint@{#1}{#2}{#3}{#4}%
}
\def\mint@sup@sub#1#2#3_#4{%
  \mint@{#1}{#2}{#4}{#3}%
}
\newcommand*{\mint@}[4]{%
  \mathop{}%
  \mkern-\thinmuskip
  \mathchoice{%
    \mint@@{#1}{#2}{#3}{#4}%
        \displaystyle\textstyle\scriptstyle
  }{%
    \mint@@{#1}{#2}{#3}{#4}%
        \textstyle\scriptstyle\scriptstyle
  }{%
    \mint@@{#1}{#2}{#3}{#4}%
        \scriptstyle\scriptscriptstyle\scriptscriptstyle
  }{%
    \mint@@{#1}{#2}{#3}{#4}%
        \scriptscriptstyle\scriptscriptstyle\scriptscriptstyle
  }%
  \mkern-\thinmuskip
  \int#1%
  \ifx\\#3\\\else_{#3}\fi
  \ifx\\#4\\\else^{#4}\fi  
}
\newcommand*{\mint@@}[7]{%
  \begingroup
    \sbox0{$#5\int\m@th$}%
    \sbox2{$#5\int_{}\m@th$}%
    \dimen2=\wd0 %
    \let\mint@limits=#1\relax
    \ifx\mint@limits\relax
      \sbox4{$#5\int_{\kern1sp}^{\kern1sp}\m@th$}%
      \ifdim\wd4>\wd2 %
        \let\mint@limits=\nolimits
      \else
        \let\mint@limits=\limits
      \fi
    \fi
    \ifx\mint@limits\displaylimits
      \ifx#5\displaystyle
        \let\mint@limits=\limits
      \fi
    \fi
    \ifx\mint@limits\limits
      \sbox0{$#7#3\m@th$}%
      \sbox2{$#7#4\m@th$}%
      \ifdim\wd0>\dimen2 %
        \dimen2=\wd0 %
      \fi
      \ifdim\wd2>\dimen2 %
        \dimen2=\wd2 %
      \fi
    \fi
    \rlap{%
      $#5%
        \vcenter{%
          \hbox to\dimen2{%
            \hss
            $#6{#2}\m@th$%
            \hss
          }%
        }%
      $%
    }%
  \endgroup
}

\def\XXint#1#2#3{{\setbox0=\hbox{$#1{#2#3}{\int}$ }
		\vcenter{\hbox{$#2#3$ }}\kern-.6\wd0}}

\renewcommand{\div}{{\rm div}}
\renewcommand{\geq}{\geqslant}
\renewcommand{\leq}{\leqslant}



\renewcommand{\epsilon}{\varepsilon}
\renewcommand{\phi}{\varphi}

\newcommand{\nl}{{\mathbf n}}

\renewcommand{\div}{{\rm div}}

\newcommand{\R}{\mathbb{R}}
\newcommand{\N}{\mathbb{N}}

\newcommand{\bu}{{\bf u}}		




\newcommand{\f}{\mathbf{f}}
\newcommand{\g}{\mathbf{g}}

\renewcommand{\g}{{\bf g}}

\newcommand{\map}{\EuScript{L}}
\newcommand{\bc}{\EuScript{B}}
\newcommand{\train}{\EuScript{S}}

\newcommand{\er}{\EuScript{E}}

\newcommand{\cost}{\EuScript{C}}

\newcommand{\df}{\EuScript{D}}
\newcommand{\dom}{\mathbb{D}}
\newcommand{\bv}{\mathbf{v}}

\newcommand{\res}{\EuScript{R}}

\newcommand{\bD}{\partial D}
\newcommand{\bdom}{\partial \dom}
\newcommand{\tr}{\EuScript{T}}

\begin{document}

\date{\today}

\title{Estimates on the generalization error of Physics Informed Neural Networks (PINNs) for approximating \\a class of inverse problems for PDEs}

\author{Siddhartha Mishra \thanks{Seminar for Applied Mathematics (SAM), D-Math \newline
  ETH Z\"urich, R\"amistrasse 101, 
  Z\"urich-8092, Switzerland} and
  Roberto Molinaro \thanks{Seminar for Applied Mathematics (SAM), D-Math \newline
  ETH Z\"urich, R\"amistrasse 101, 
  Z\"urich-8092, Switzerland.}}

\maketitle
\begin{abstract}
Physics informed neural networks (PINNs) have recently been very successfully applied for efficiently approximating inverse problems for PDEs. We focus on a particular class of inverse problems, the so-called data assimilation or unique continuation problems, and prove rigorous estimates on the generalization error of PINNs approximating them. An abstract framework is presented and conditional stability estimates for the underlying inverse problem are employed to derive the estimate on the PINN generalization error, providing rigorous justification for the use of PINNs in this context. The abstract framework is illustrated with examples of four prototypical linear PDEs. Numerical experiments, validating the proposed theory, are also presented.

\end{abstract}

\section{Introduction}
Partial Differential Equations (PDEs) are widely used in modeling of phenomena of interest in physics, biology and engineering. A wide variety of numerical methods, such as finite difference, finite element, finite volume and spectral methods, have been devised to numerically simulate solutions of different types of PDEs. Despite their widespread use, these methods may not suffice to solve many \emph{large scale} problems with PDEs such as uncertainty quantification (UQ), PDEs in very high dimensions, deterministic and Bayesian inverse problems and PDEs with multiple scales and modeling multiphysics. A key challenge in the simulation of these problems is the very high computational cost, even on state of the art high performance computing platforms. 

\emph{Deep learning}, based on \emph{deep neural networks} (DNNs) \cite{DLbook}, i.e. functions formed by compositions of affine transformations and scalar non-linear activation functions, has been very successful in tackling diverse tasks in science and engineering \cite{DLnat} such as at image and text classification, computer vision, text and speech recognition, autonomous systems and robotics, game intelligence and even protein folding \cite{Dfold}.

Deep learning is being increasingly used in the numerical approximations of PDEs. A brief and very incomplete survey of this rapidly growing body of literature is as follows, one approach in using deep neural networks for numerically approximating PDEs is based on explicit (or semi-implicit) representation formulas such as the Feynman-Kac formula for parabolic (and elliptic) PDEs, whose compositional structure is in turn utilized for approximation by DNNs. This approach is presented and analyzed for a variety of (parametric) elliptic, parabolic and linear transport PDEs in \cite{E1,HEJ1,Jent1,Pet1} and references therein.

Another strategy is to augment existing numerical methods by adding deep learning based modules to them, for instance by learning free parameters of numerical schemes from data \cite{SM1,DR1} and references therein. 

A third approach is to use deep neural networks to learn \emph{observables} or quantities of interest of the solutions of the underlying PDEs, from data. This approach has been described in \cite{LMR1,LMM1,MR1,LMPR1} in the context of uncertainty quantification and PDE constrained optimization and \cite{QUAT1} for model order reduction, among others. 

As deep neural networks possess the so-called \emph{universal approximation property} i.e any continuous function can be approximated by DNNs, see \cite{Bar1,Hor1,Cy1,YAR1} and references therein, it is natural to use them as ansatz spaces for the solutions of PDEs, in particular by collocating the PDE residual at suitably chosen training points. This approach was first proposed in \cite{Lag1,Lag2}. More recently, it has been revived and developed in significantly greater detail by the pioneering contributions of Karniadakis and collaborators, starting with \cite{KAR1,KAR2}. These authors have termed the underlying neural networks as \emph{Physics Informed Neural Networks} (PINNs) and we will continue to use this by now widely accepted terminology in this paper. There has been an explosive growth of papers that present PINNs for different applications and a very incomplete list of references includes \cite{KAR4,KAR5,KAR6,KAR7,KAR8, shukla, jag1, jag2}. 

PINNs have already emerged as an attractive framework for approximating solutions of PDEs. They are particularly successful for approximating the so-called \emph{inverse problems} involving PDEs, see \cite{KAR2,KAR4,KAR6,KAR8} and references therein. For such inverse problems, one does not necessarily have complete information on the inputs to the PDEs, such as initial data, boundary conditions and coefficients. Hence, the so-called \emph{forward problem} cannot be solved uniquely. However, we have access to (noisy) data for (observables) of the underlying solution. The aim is to use this data in order to determine the unknown inputs of the PDEs and consequently its solution. PINNs are very promising in solving inverse problems efficiently on account of their ability to integrate data and PDEs. This is seen in the robustness and accuracy of the results (see \cite{KAR2,KAR4,KAR6} and references therein). Another very attractive feature of PINNs is simplicity of their implementation. In particular, essentially the same code (with very minor modifications) can be used to approximate both forward and inverse problems. 

Why are PINNs so efficient at approximating solutions to both forward and inverse problems for PDEs? Very few rigorous results on the approximation error with PINNs are available, with two recent notable exceptions. In \cite{DAR1}, the authors investigate convergence of PINNs to solutions of the forward problem for linear elliptic and parabolic PDEs, on increasing the number of training samples. They prove convergence under certain assumptions on the training loss, based on H\"older regularized loss functions. 

On the other hand, in another recent paper \cite{MM1}, the authors estimate the generalization (approximation) error for PINNs approximating forward problems for a large class of linear and nonlinear PDEs in terms of the training error and the number of training samples. The key point in \cite{MM1} is the leveraging of \emph{stability estimates} for classical solutions of the underlying PDE, together with estimates on quadrature errors, to derive upper bounds on the generalization error of PINNs. 

The aforementioned articles provide rigorous justification for using PINNs to approximate forward problems for certain PDEs. However, to the best of the authors' knowledge, there is no rigorous estimate for the approximation error of PINNs for inverse problems. The main aim of this article is to \emph {derive rigorous bounds on the generalization errors of PINNs for approximating solutions to  a class of inverse problems for PDEs}. We will focus on the so-called \emph{unique continuation} or \emph{data assimilation} problems, where some inputs to the PDEs are unknown and need to be inferred from (possibly noisy) measurements of certain observables of the underlying solution. These are exactly the class of inverse problems, which were considered in recent papers \cite{KAR2,KAR4,KAR6} and references therein. 

Our strategy for deriving bounds on the generalization errors follows the recent paper \cite{MM1} and is based on \emph{conditional stability estimates} on the underlying solution of the inverse problem. Such stability estimates have been widely investigated in the theoretical PDE literature and can be derived for a variety of linear and non-linear PDEs, for instance from the well-known \emph{three-balls} inequalities \cite{ALE1} and references therein, and \emph{Carleman estimates} \cite{IBook} and references therein. We will formalize a very general framework for the unique continuation (data assimilation) inverse problem for an abstract PDE, with a wide variety of potential applications. Our derived generalization error estimate will show that the error due to approximating the underlying inverse problem with a trained PINN will be sufficiently low as long as 
\begin{itemize}
    \item The training error is low i.e, the PINN has been trained well. This error is computed and monitored during the training process. Hence, it is available \emph{a posteriori}.
    \item The number of training (collocation) points is sufficiently large. This number is determined by the error due to an underlying quadrature rule.
    \item The solution to the underlying inverse problem is \emph{conditionally stable} i.e, it is stable as long as the underlying solution and the approximating PINN are sufficiently regular. 
\end{itemize}
Thus, with the derived error estimate, we identify possible mechanisms by which the PINNs are able to approximate inverse problems for PDEs so well and provide a rigorous justification for their successful use. 

We will exemplify the proposed abstract framework by applying to four well-known model problems, namely the data assimilation problems for the Poisson, Heat, Wave and Stokes equations, covering the whole spectrum of linear elliptic, parabolic, hyperbolic and indefinite PDEs. For each of these examples, we derive bounds on the generalization error and also provide numerical experiments to validate the proposed theory.

The rest of the paper is organized as follows, in section \ref{sec:2}, we describe the PINNs algorithm for the unique continuation (data assimilation) problem for an abstract PDE and derive an estimate on the generalization error. In sections \ref{sec:3},\ref{sec:4},\ref{sec:5} and \ref{sec:6}, we consider the Poisson, Heat, Wave and Stokes equations, respectively. 
\section{An abstract framework for Physics informed Neural Networks approximating inverse problems for PDEs}
\label{sec:2}
\subsection{The underlying abstract PDE}
\label{sec:21}
Let $\dom \subset \R^{\bar{d}}$, for some $\bar{d} \geq 1$, be the underlying domain, with smooth ($C^1$) boundary, denoted by $\partial \dom$. We include space-time domains by setting 
$\dom = (0,T) \times D \subset \R^d$ with $d \geq 1$. In this case $\bar{d} = d +1$ and the \emph{space-time boundary} is $\bdom = \bD \times (0,T) \cup D \times \{t=0\}$, with $\bD$ denoting the smooth boundary of $D$. Let $X = L^{p_x}(\dom;\R^m)$, $Y = L^{p_y}(\dom;\R^m)$ and $Z = L^{p_z}(\dom^{\prime};\R^m)$, for some $\dom^{\prime} \subset \dom$, with $m \geq 1$ and $1 \leq p_x,p_y,p_z < \infty$ be the underlying spaces and $X^{\ast} \subset X$, $Y^{\ast} \subset Y$, $Z^{\ast} \subset Z$ be Banach spaces. Let $\bar{X} \subset L^{\bar{p}_x}(\bdom;\R^m)$ and 
$\bar{Y} \subset L^{\bar{p}_y}(\bdom;\R^m)$, with $1 \leq \bar{p}_x,\bar{p}_y < \infty$  be Banach spaces.
\subsubsection{The Forward problem}
The abstract PDE is represented by the following differential equation,
\begin{equation}
    \label{eq:pde}
    \df(\bu) = \f.
\end{equation}
Here, the \emph{differential operator} is a mapping, $\df: X^{\ast} \mapsto Y^{\ast}$ and the \emph{input} $\f \in Y^{\ast}$, such that 
\begin{equation}
\label{eq:assm1}
\begin{aligned}
&(H1): \quad \|\df(\bu)\|_{Y^{\ast}} < +\infty, \quad \forall~ \bu \in X^{\ast}, ~{\rm with}~\|\bu\|_{X^{\ast}} < +\infty. \\
&(H2):\quad \|\f\|_{Y^{\ast}} < +\infty. 
\end{aligned}
\end{equation}
The PDE \eqref{eq:pde} is equipped with the \emph{boundary conditions},
\begin{equation}
    \label{eq:bc}
    \bc\left(\tr\bu\right) = \f_b,
\end{equation}
with \emph{trace operator} $\tr:X^{\ast} \mapsto \bar{X}$ and a generic boundary operator $\bc:\bar{X} \mapsto \bar{Y}$, with each of these operators being bounded under the corresponding norms. We note that the generic boundary condition also includes initial conditions when $\dom = (0,T) \times D \subset \R^d$. We assume that $\f_b \in \bar{Y}$.

We further assume that the forward problem \eqref{eq:pde}, \eqref{eq:bc} is well-posed i.e, given $\f \in Y^{\ast}, \f_b \in \bar{Y}$, there exists an unique $\bu \in X^{\ast}$ such that \eqref{eq:pde} and \eqref{eq:bc} are satisfied. 
\subsubsection{The Inverse problem}
As mentioned in the introduction, we will focus on the following inverse problem in this article, we assume that the generic boundary conditions (which include initial conditions) $\f_b$ in \eqref{eq:bc} are not known. Clearly, the forward problem for the PDE \eqref{eq:pde} will be ill-posed in this case. In particular, no unique solution can be guaranteed. Furthermore, we assume that we have access to (noiseless) measurements of the underlying solution $\bu$ in a sub-domain $\dom^{\prime} \subset \dom$ i.e,
\begin{equation}
    \label{eq:dt}
    \map(\bu) = \g, \quad \forall x \in \dom^{\prime},
\end{equation}
with the \emph{restriction operator} $\map: X^{\ast} \mapsto Z^{\ast}$ and \emph{data} $\g \in Z^{\ast}$, with
\begin{equation}
\label{eq:assm2}
\begin{aligned}
&(H3): \quad \|\map(\bu)\|_{Z^{\ast}} < +\infty, \quad \forall~ \bu \in X^{\ast}, ~{\rm with}~\|\bu\|_{X^{\ast}} < +\infty. \\
&(H4):\quad \|\g\|_{Z^{\ast}} < +\infty. 
\end{aligned}
\end{equation}
We term $\dom^{\prime}$ as the \emph{observation domain}. Thus, in solving the inverse problem \eqref{eq:pde}, \eqref{eq:dt}, one determines the function $\bu \in X^{\ast}$ and consequently the boundary conditions $\tr(\bu)$ from the data \eqref{eq:dt}, given only on the observation domain. This inverse problem is often referred to in the literature as the \emph{unique continuation} or the \emph{data assimilation} problem,\cite{BO1} and references therein.  

We further assume that solutions to the inverse problem \eqref{eq:pde}, \eqref{eq:dt}, satisfy the following \emph{conditional stability estimate},
\begin{assumption}
Let $\hat{X} \subset X^{\ast} \subset X = L^{p_x}(\dom)$ be a Banach space. For any $\bu,\bv \in \hat{X}$, the differential operator $\df$ and restriction operator $\map$ satisfy,
\begin{equation}
    \label{eq:assm}
(H5):\quad    \|\bu - \bv\|_{L^{p_x}(E)} \leq C_{pd}\left(\|\bu\|_{\hat{X}}, \|\bv\|_{\hat{X}} \right) \left(\|\df(\bu) - \df(\bv)\|^{\tau_p}_{Y} + \|\map(\bu) - \map(\bv)\|^{\tau_d}_{Z} \right),
\end{equation}
\end{assumption}
for some $0 < \tau_p,\tau_d \leq 1$ and for any subset $\dom^{\prime} \subset E \subset \dom$. This bound \eqref{eq:assm} is termed a \emph{conditional stability estimate} as it presupposes that the underlying solutions have sufficiently regularity as $\hat{X} \subset X^{\ast} \subset X$. 
\begin{remark}
We can extend the hypothesis for the inverse problem \eqref{eq:pde},\eqref{eq:bc} in the following ways,
\begin{itemize}
    \item Allow the measurement set $\dom^{\prime}$ to intersect the boundary i.e, $\partial \dom^{\prime} \cap \bdom \neq \Phi$.
    \item Replace the bound \eqref{eq:assm} with the weaker bound,
    \begin{equation}
    \label{eq:assm3}
(H6):\quad    \|\bu - \bv\|_{L^{p_x}(E)} \leq C_{pd}\left(\|\bu\|_{\hat{X}}, \|\bv\|_{\hat{X}} \right) \omega \left(\|\df(\bu) - \df(\bv)\|_{Y} + \|\map(\bu) - \map(\bv)\|_{Z} \right),
\end{equation}
    with $\omega:\R \mapsto \R_+$ being a modulus of continuity. 
\end{itemize}

\end{remark}
We will provide several examples for this abstract Inverse problem satisfying the assumptions $H1-H6$ in the following sections. 
\subsubsection{Literature on the unique continuation (data assimilation) problem}
The unique continuation or data assimilation problem \eqref{eq:pde}, \eqref{eq:dt}, has been extensively studied by the theoretical PDE community for a long time, arguably starting with the influential work of Hadamard on the ill-posedness of the elliptic Cauchy problem. An extensive survey of the available literature on the unique continuation problem, particularly highlighting applications of the three balls inequality can be read from \cite{ALE1} and references therein. Similarly, the vast body of literature on use of Carleman estimates to study well-posedness of the unique continuation (data assimilation) problem for evolutionary PDEs has been surveyed in \cite{IBook}, with particularly emphasis on applications to controllability of PDEs. 

Algorithms to numerically approximate solutions of the unique continuation problems are mostly based on quasi-reversibility methods \cite{LL,Bour} and penalty methods \cite{BCD}, relying on Tikhonov regularization and its variants. More recently, regularized and stabilized non-confirming finite element methods have also been developed to deal with different examples of the data assimilation problem in \cite{B1,B2,BO1,BO2,BO3} and references therein. We will provide an alternative approximation framework, based on physics informed neural networks (PINNs) in this paper.

\subsection{Quadrature rules}
\label{sec:22}
In the following section, we will need to approximate integrals of functions. Hence, we need an abstract formulation for quadrature. To this end, we consider a mapping $h: \dom \mapsto \R^m$, such that $h \in \underline{Y} \subset Y^{\ast} \subset Y$. We are interested in approximating the integral,
$$
\overline{h}:= \int\limits_{\dom} h(y) dy,
$$
with $dy$ denoting the $\bar{d}$-dimensional Lebesgue measure. In order to approximate the above integral by a quadrature rule, we need to specify the quadrature points $y_{i} \in \dom$ for $1 \leq i \leq N$, for some $N \in \N$ as well as weights $w_i$, with $w_i \in \R_+$. Then the quadrature is defined by,
\begin{equation}
    \label{eq:quad}
    \overline{h}_N := \sum\limits_{i=1}^N w_i h(y_i).
\end{equation}
We further assume that the quadrature error is bounded as,
\begin{equation}
    \label{eq:qassm}
    \left|\overline{h} - \overline{h}_N\right| \leq C_{q}
    \left(\|h\|_{\underline{Y}},\bar{d} \right) N^{-\alpha},
\end{equation}
for some $\alpha > 0$. 

As long as the domain $\dom$ is in reasonably low dimension i.e $\bar{d} \leq 4$, we can use standard (composite) Gauss quadrature rules on an underlying grid. In this case, the quadrature points and weights depend on the underlying order of the quadrature rule \cite{SBbook} and the rate $\alpha$ depends on the regularity of the underlying integrand i.e, on the space $\underline{Y}$.

On the other hand, these grid based quadrature rules are not suitable for domains in high dimensions. For moderately high dimensions i.e $4 \leq \bar{d} \approx 20$, we can use \emph{low discrepancy sequences}, such as the Sobol and Halton sequences, as quadrature points \cite{CAF1}. As long as the integrand $h$ is of bounded \emph{Hardy-Krause variation} \cite{owen}, the error in \eqref{eq:qassm} converges at a rate $(\log(N))^{\bar{d}}N^{-1}$. One can also employ sparse grids and Smolyak quadrature rules \cite{sgbook} in this regime. 

For problems in very high dimensions $\bar{d} \gg 20$, Monte-Carlo quadrature is the numerical integration method of choice \cite{CAF1}. In this case, the quadrature points are randomly chosen, independent and identically distributed (with respect to a scaled Lebesgue measure). The estimate \eqref{eq:qassm} holds in the root mean square (RMS) sense and the rate of convergence is $\alpha = \frac{1}{2}$.

We will also need integrals of functions $h_d: \dom^{\prime} \subset \dom$ i.e,
$$
\overline{h_d}:= \int\limits_{\dom^{\prime}} h_d(y) dy.
$$
In principle, a different set of quadrature points $z_{j} \in \dom^{\prime}$ for $1 \leq j \leq N_d$, for some $N_d \in \N$ as well as weights $w^d_j$, can be chosen. Then the quadrature is defined by,
\begin{equation}
    \label{eq:dqd}
    \overline{h}_{d,N} := \sum\limits_{j=1}^{N_d} w^d_j h_d(z_j).
\end{equation}
Analogous to \eqref{eq:qassm}, the resulting error is bounded by,
\begin{equation}
    \label{eq:dqassm}
    \left|\overline{h}_d - \overline{h}_{d,N}\right| \leq C_{qd}
    \left(\|h\|_{\underline{Z}},\bar{d} \right) N_d^{-\alpha_d},
\end{equation}
for some $\alpha_d > 0$ and for $\underline{Z} \subset Z$. 
\subsection{PINNs}
\label{sec:23}
In this section, we will describe physics-informed neural networks (PINNs) for approximating solutions of the inverse problem \eqref{eq:pde}, \eqref{eq:dt} in the following steps.
\subsubsection{Neural Networks.}
Given an input $y \in \dom$, a feedforward neural network (also termed as a multi-layer perceptron), shown in figure \ref{fig:1}, transforms it to an output, through a layer of units (neurons) which compose of either affine-linear maps between units (in successive layers) or scalar non-linear activation functions within units \cite{DLbook}, resulting in the representation,
\begin{equation}
\label{eq:ann1}
\bu_{\theta}(y) = C_K \circ\sigma \circ C_{K-1}\ldots \ldots \ldots \circ\sigma \circ C_2 \circ \sigma \circ C_1(y).
\end{equation} 
Here, $\circ$ refers to the composition of functions and $\sigma$ is a scalar (non-linear) activation function. A large variety of activation functions have been considered in the machine learning literature \cite{DLbook}. Popular choices for the activation function $\sigma$ in \eqref{eq:ann1} include the sigmoid function, the hyperbolic tangent function and the \emph{ReLU} function.

For any $1 \leq k \leq K$, we define
\begin{equation}
\label{eq:C}
C_k z_k = W_k z_k + b_k, \quad {\rm for} ~ W_k \in \R^{d_{k+1} \times d_k}, z_k \in \R^{d_k}, b_k \in \R^{d_{k+1}}.
\end{equation}
For consistency of notation, we set $d_1 = \bar{d}$ and $d_K = m$. 

Thus in the terminology of machine learning (see also figure \ref{fig:1}), our neural network \eqref{eq:ann1} consists of an input layer, an output layer and $(K-1)$ hidden layers for some $1 < K \in \N$. The $k$-th hidden layer (with $d_k$ neurons) is given an input vector $z_k \in \R^{d_k}$ and transforms it first by an affine linear map $C_k$ \eqref{eq:C} and then by a nonlinear (component wise) activation $\sigma$. A straightforward addition shows that our network contains $\left(\bar{d} + m + \sum\limits_{k=2}^{K-1} d_k\right)$ neurons. 
We also denote, 
\begin{equation}
\label{eq:theta}
\theta = \{W_k, b_k\}, \theta_W = \{ W_k \}\quad \forall~ 1 \leq k \leq K,
\end{equation} 
to be the concatenated set of (tunable) weights for our network. It is straightforward to check that $\theta \in \Theta \subset \R^M$ with
\begin{equation}
\label{eq:ns}
M = \sum\limits_{k=1}^{K-1} (d_k +1) d_{k+1}.
\end{equation}
 \begin{figure}[htbp]
\centering
\includegraphics[width=8cm]{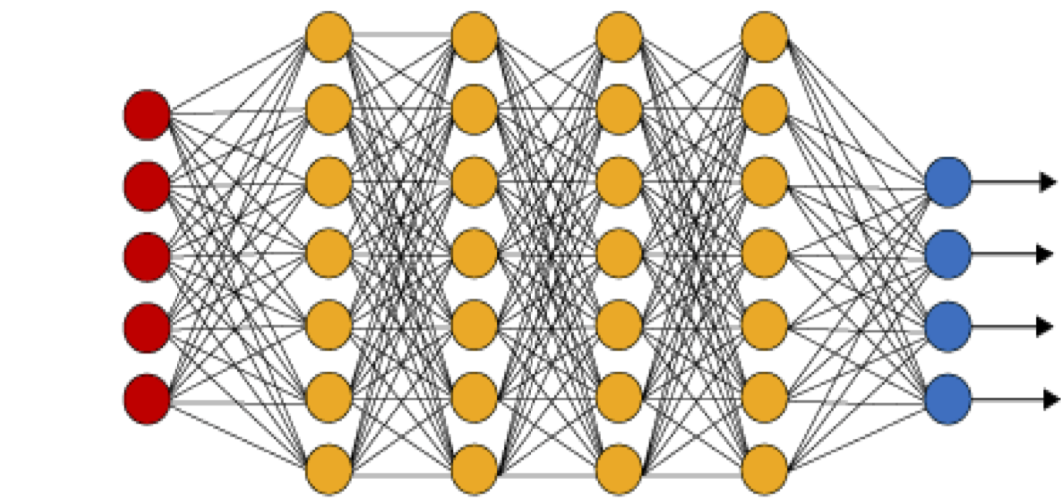}
\caption{An illustration of a (fully connected) deep neural network. The red neurons represent the inputs to the network and the blue neurons denote the output layer. They are
connected by hidden layers with yellow neurons. Each hidden unit (neuron) is connected by affine linear maps between units in different layers and then with nonlinear (scalar) activation functions within units.}
\label{fig:1}
\end{figure}
\subsubsection{Training PINNs: Loss functions and optimization}
The neural network $\bu_{\theta}$ \eqref{eq:ann1} depends on the tuning parameter $\theta \in \Theta$ of weights and biases. Within the standard paradigm of \emph{deep learning} \cite{DLbook}, one \emph{trains} the network by finding tuning parameters $\theta$ such that the loss (error, mismatch, regret) between the neural network and the underlying target is minimized. Here, our target is the solution $\bu \in X^{\ast}$ of the abstract inverse problem \eqref{eq:pde},\eqref{eq:dt} and we wish to find the tuning parameters $\theta$ such that the resulting neural network $\bu_{\theta}$ approximates $\bu$. 

To do so, we follow \cite{Lag1,KAR1,MM1} and define the following \emph{PDE residual}:
\begin{equation}
    \label{eq:res1}
    \res = \res_{\theta}:= \df\left(\bu_{\theta}\right) - \f. 
\end{equation}
By assumptions (H1),(H2) \eqref{eq:assm1}, we see that $\res \in Y^{\ast}$ and $\|\res\|_{Y^{\ast}} < +\infty$ for all $\theta \in \Theta$. Note that $\res(\bu) = \df(\bu) - \f \equiv 0$, for the solution $\bu$ of the PDE \eqref{eq:pde}. Hence, the term \emph{residual} is justified for \eqref{eq:res1}. 

As the inverse problem also involves a data term \eqref{eq:dt} on the observation domain, we need the following data residual:
\begin{equation}
    \label{eq:resd}
    \res_d = \res_{d,\theta}:= \map(\bu_{\theta}) - \g, \quad \forall x \in \dom^{\prime}.
\end{equation}
By assumptions (H3),(H4) \eqref{eq:assm2}, we see that $\res_d \in Z^{\ast}$ and $\|\res_d\|_{Z^{\ast}} < +\infty$ for all $\theta \in \Theta$. Note that $\res_d(\bu) = \map(\bu) - \g \equiv 0$ for all $x \in \dom^{\prime}$, for the solution $\bu$ of the inverse problem \eqref{eq:pde},\eqref{eq:dt}. 

The strategy of PINNs, following \cite{KAR2}, is to minimize the \emph{residuals} \eqref{eq:res1},\eqref{eq:resd}, simultaneously over the admissible set of tuning parameters $\theta \in \Theta$ i.e 
\begin{equation}
    \label{eq:pinn1}
    {\rm Find}~\theta^{\ast} \in \Theta:\quad \theta^{\ast} = {\rm arg}\min\limits_{\theta \in \Theta} \left(\|\res_{\theta}\|_{Y} + \|\res_{d,\theta}\|_{Z}\right).
\end{equation}
Realizing that $Y = L^{p_y}(\dom), Z=L^{p_z}(\dom^{\prime})$ for some $1 \leq p_x,p_y < \infty$, we can equivalently minimize,
\begin{equation}
    \label{eq:pinn2}
    {\rm Find}~\theta^{\ast} \in \Theta:\quad \theta^{\ast} = {\rm arg}\min\limits_{\theta \in \Theta} \left(\int\limits_{\dom} |\res_{\theta}(y)|^{p_y} dy +\int\limits_{\dom^{\prime}} |\res_{d,\theta}(z)|^{p_z} dz \right)  . 
\end{equation}
As it will not be possible to evaluate the integral in \eqref{eq:pinn2} exactly, we need to approximate it numerically by a quadrature rule. To this end, we use the quadrature rules \eqref{eq:quad},\eqref{eq:dqd} discussed earlier. We select the \emph{training sets} $\train_{int} = \{y_i\}$ with $y_i \in \dom$ for all $1 \leq i \leq N_{int}$  and $\train_d = \{z_j\}$ with $z_j \in \dom^{\prime}$ and $1 \leq j \leq N_d$ as the quadrature points for the quadrature rules \eqref{eq:quad}, \eqref{eq:dqd}, respectively.  We consider the following \emph{loss function}:
\begin{equation}
    \label{eq:lf1}
    J(\theta):= \sum\limits_{j=1}^{N_d} w^d_j |\res_{d,\theta}(z_j)|^{p_z}+ \lambda \sum\limits_{i=1}^{N_{int}} w_i |\res_{\theta}(y_i)|^{p_y}, 
\end{equation}
with the residuals $\res_d$, $\res$ defined in \eqref{eq:resd} and \eqref{eq:res1}. Here, $\lambda > 0$ is an additional hyperparameter that balances the roles of the data and PDE residuals.

It is common in machine learning \cite{DLbook} to regularize the minimization problem for the loss function i.e we seek to find,
\begin{equation}
\label{eq:lf2}
\theta^{\ast} = {\rm arg}\min\limits_{\theta \in \Theta} \left(J(\theta) + \lambda_{reg} J_{reg}(\theta) \right).
\end{equation}  
Here, $J_{reg}:\Theta \to \R$ is a \emph{weight regularization} (penalization) term. A popular choice is to set  $J_{reg}(\theta) = \|\theta_W\|^q_q$ for either $q=1$ (to induce sparsity) or $q=2$. The parameter $0 \leq \lambda_{reg} \ll 1$ balances the regularization term with the actual loss $J$ \eqref{eq:lf1}. 

The above minimization problem amounts to finding a minimum of a possibly non-convex function over a subset of $\R^M$ for possibly very large $M$. We will follow standard practice in machine learning and solving this minimization problem approximately by either (first-order) stochastic gradient descent methods such as ADAM \cite{adam} or even higher-order optimization methods such as different variants of the LBFGS algorithm \cite{lbfgs}. 

For notational simplicity, we denote the (approximate, local) minimum in \eqref{eq:lf2} as $\theta^{\ast}$ and the underlying deep neural network $\bu^{\ast}= \bu_{\theta^{\ast}}$ will be our physics-informed neural network (PINN) approximation for the solution $\bu$ of the PDE \eqref{eq:pde}.  

The proposed algorithm for computing this PINN is summarized below,
\begin{algorithm} 
\label{alg:PINN} {\bf Finding a physics informed neural network to approximate the solution of the inverse problem \eqref{eq:pde},\eqref{eq:dt}}. 
\begin{itemize}
\item [{\bf Inputs}:] Underlying domains $\dom^{\prime} \subset \dom$, differential operator $\df$ and input source term $\f$ for the PDE \eqref{eq:pde}, restriction operator $\map$ and data term $\g$ for the data \eqref{eq:dt}, quadrature points and weights for the quadrature rules \eqref{eq:quad} \eqref{eq:dqd}, non-convex gradient based optimization algorithms.
\item [{\bf Goal}:] Find PINN $\bu^{\ast}= \bu_{\theta^{\ast}}$ for approximating the inverse problem for the  \eqref{eq:pde} with data \eqref{eq:dt}. 
\item [{\bf Step $1$}:] Choose the training set $\train_{int} = \{y_i\}$ for $y_i \in \dom$, for all $1 \leq i \leq N_{int}$  such that $\{y_i\}$ are quadrature points for the underlying quadrature rule \eqref{eq:quad}. Choose the training set $\train_d = \{z_j\}$ for $z_j \in \dom^{\prime}$, for all $1 \leq j \leq N_d$  such that $\{z_j\}$ are quadrature points for the underlying quadrature rule \eqref{eq:dqd}.
\item [{\bf Step $2$}:] For an initial value of the weight vector $\overline{\theta} \in \Theta$, evaluate the neural network $\bu_{\overline{\theta}}$ \eqref{eq:ann1}, the PDE residual \eqref{eq:res1}, data residual \eqref{eq:resd}, the loss function \eqref{eq:lf2} and its gradients to initialize the underlying optimization
algorithm.
\item [{\bf Step $3$}:] Run the optimization algorithm till an approximate local minimum $\theta^{\ast}$ of \eqref{eq:lf2} is reached. The map $\bu^{\ast} = \bu_{\theta^{\ast}}$ is the desired PINN for approximating the solution $\bu$ of the inverse problem \eqref{eq:pde},\eqref{eq:dt}. 
\end{itemize}
\end{algorithm}
\subsection{An abstract estimate on the generalization error}
In this section, we will estimate the error due to the PINN (generated by algorithm \ref{alg:PINN}) in approximating the solution $\bu$ of the inverse problem for PDE \eqref{eq:pde} with data \eqref{eq:dt}. 

We focus on the so-called \emph{generalization error} of the PINN, which in machine learning terminology \cite{MLbook}, is often understood as the error of the neural network on \emph{unseen data}. Following the recent paper \cite{MM1}, we set $\dom^{\prime} \subset E \subset \dom$ and define the corresponding generalization error as,
 \begin{equation}
    \label{eq:egen}
    \er_G(E) = \er_{G} (E;\theta^{\ast},\train_{int},\train_d) := \|\bu-\bu^{\ast}\|_{L^{p_x}(E)}.
\end{equation}
As written out above, the generalization error clearly depends on the training sets $\train_{int},\train_d$, as well as on the parameters $\theta^{\ast}$ of the PINN, generated by algorithm \ref{alg:PINN}. However, we shall suppress this explicit dependence for notational convenience. 

As in the recent article \cite{MM1}, we estimate the generalization error \eqref{eq:egen} in terms of the so-called \emph{training error} $\er_T = \er_T(\theta^{\ast},\train_{int},\train_d)$ defined by,
\begin{equation}
    \label{eq:etrn}
    \er_T = \Big(\er_{d,T}^{p_z} + \lambda\er_{p,T}^{p_y}\Big)^{\frac{2}{p_y + p_z}},\quad \er_{d,T}:=\left(\sum\limits_{j=1}^{N_d} w^d_j |\res_{d,\theta}(z_j)|^{p_z}\right)^{\frac{1}{p_z}},\quad  \er_{p,T}:= \left(\sum\limits_{i=1}^{N_{int}} w_i |\res_{\theta}(y_i)|^{p_y}\right)^{\frac{1}{p_y}}. 
\end{equation}
Note that, after the training has concluded, the training error $\er_T$ can be readily computed from the loss functions \eqref{eq:lf1} or \eqref{eq:lf2}. The bound on generalization error in terms of training error is given by the following estimate,
\begin{theorem}
\label{thm:1}
Let $\bu \in \hat{X} \subset X^{\ast} \subset X$ be the solution of the inverse problem associated with PDE 
\eqref{eq:pde} and data \eqref{eq:dt}. Assume that the stability hypothesis \eqref{eq:assm} holds for any $\dom^{\prime} \subset E \subset \dom$. Let $\bu^{\ast} \in \hat{X} \subset X^{\ast}$ be a PINN generated by the algorithm \ref{alg:PINN}, based on the training sets $\train_{int}$ (of quadrature points corresponding to the quadrature rule \eqref{eq:quad}) and $\train_d$ (of quadrature points corresponding to the quadrature rule \eqref{eq:dqd}). Further assume that the residuals $\res_{\theta^{\ast}}$ \eqref{eq:res1},  and $\res_{d,\theta^{\ast}}$ \eqref{eq:resd}, be such that $|\res_{\theta^{\ast}}|^{p_y} \in \underline{Y},|\res_{d,\theta^{\ast}}|^{p_z} \in \underline{Z}$ and the quadrature errors satisfy \eqref{eq:qassm}, \eqref{eq:dqassm}. Then the following estimate on the generalization error \eqref{eq:egen} holds,
\begin{equation}
    \label{eq:egenb}
    \er_G \leq C_{pd} \left(\er_{p,T}^{\tau_p} + \er_{d,T}^{\tau_d}+ C_{q}^{\frac{\tau_p}{p_y}}N_{int}^{-\frac{\alpha \tau_p}{p_y}} + C_{qd}^{\frac{\tau_d}{p_z}}N_d^{-\frac{\alpha_d \tau_d}{p_z}} \right),
\end{equation}
with constants $C_{pd} = C_{pd}\left(\|\bu\|_{\hat{X}},\|\bu^{\ast}\|_{\hat{X}}\right)$, $C_{q} = C_{q}\left(\left\||\res_{\theta^{\ast}}|^{p_y}\right\|_{\underline{Y}}\right)$, and $C_{qd} = C_{qd}\left(\left\||\res_{d,\theta^{\ast}}|^{p_z}\right\|_{\underline{Z}}\right)$ stem from \eqref{eq:assm}, \eqref{eq:qassm} and \eqref{eq:dqassm}, respectively. 
\end{theorem}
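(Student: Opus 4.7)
The plan is to derive the bound by first applying the conditional stability estimate of hypothesis (H5) to the pair $(\bu,\bu^{\ast})$, and then controlling the two resulting continuum residual norms by their quadrature-based training counterparts via the quadrature error assumptions.

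First, I would take $\bv = \bu^{\ast}$ in \eqref{eq:assm}. Since $\bu$ solves the inverse problem, $\df(\bu) = \f$ and $\map(\bu) = \g$, so $\df(\bu^{\ast}) - \df(\bu) = \res_{\theta^{\ast}}$ and $\map(\bu^{\ast}) - \map(\bu) = \res_{d,\theta^{\ast}}$ as defined in \eqref{eq:res1}, \eqref{eq:resd}. The hypothesis that $\bu,\bu^{\ast} \in \hat{X}$ ensures that the constant $C_{pd}$ appearing in \eqref{eq:assm} is finite and bounded in terms of $\|\bu\|_{\hat X}, \|\bu^{\ast}\|_{\hat X}$. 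This yields the intermediate bound
\begin{equation*}
\er_G = \|\bu - \bu^{\ast}\|_{L^{p_x}(E)} \leq C_{pd}\Bigl(\|\res_{\theta^{\ast}}\|_{Y}^{\tau_p} + \|\res_{d,\theta^{\ast}}\|_{Z}^{\tau_d}\Bigr).
\end{equation*}

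Next, since $Y = L^{p_y}(\dom)$, I would write $\|\res_{\theta^{\ast}}\|_{Y}^{p_y} = \int_{\dom} |\res_{\theta^{\ast}}(y)|^{p_y}\,dy$ and apply the quadrature bound \eqref{eq:qassm} to the integrand $h = |\res_{\theta^{\ast}}|^{p_y} \in \underline{Y}$, whose quadrature approximation at the training set $\train_{int}$ is, by \eqref{eq:etrn}, precisely $\er_{p,T}^{p_y}$. This yields
\begin{equation*}
\|\res_{\theta^{\ast}}\|_{Y}^{p_y} \;\leq\; \er_{p,T}^{p_y} + C_{q}\, N_{int}^{-\alpha}.
\end{equation*}
The same argument applied on $\dom'$ with quadrature rule \eqref{eq:dqd} and bound \eqref{eq:dqassm} gives $\|\res_{d,\theta^{\ast}}\|_{Z}^{p_z} \leq \er_{d,T}^{p_z} + C_{qd}\,N_d^{-\alpha_d}$.

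Finally, I would raise these to the powers $\tau_p/p_y$ and $\tau_d/p_z$ respectively. Here the only subtle point (and the thing worth checking carefully) is the use of the elementary subadditivity $(a+b)^{s} \leq a^{s} + b^{s}$ for $a,b \geq 0$ and $s \in (0,1]$, which is legitimate because the hypotheses $0 < \tau_p,\tau_d \leq 1$ and $p_y,p_z \geq 1$ force $\tau_p/p_y \leq 1$ and $\tau_d/p_z \leq 1$. Substituting the resulting bounds
\begin{equation*}
\|\res_{\theta^{\ast}}\|_{Y}^{\tau_p} \leq \er_{p,T}^{\tau_p} + C_{q}^{\tau_p/p_y} N_{int}^{-\alpha\tau_p/p_y}, \qquad \|\res_{d,\theta^{\ast}}\|_{Z}^{\tau_d} \leq \er_{d,T}^{\tau_d} + C_{qd}^{\tau_d/p_z} N_d^{-\alpha_d\tau_d/p_z},
\end{equation*}
into the intermediate inequality produces exactly \eqref{eq:egenb}. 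There is no genuine obstacle in the argument; the content of the theorem lies entirely in combining the stability hypothesis (H5) with the quadrature error assumptions, and the proof is essentially a one-line computation once the subadditivity step is justified.
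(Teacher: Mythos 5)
Your proposal is correct and follows essentially the same route as the paper's proof: apply the conditional stability estimate \eqref{eq:assm} to $\bu$ and $\bu^{\ast}$, identify the training errors as quadratures of the continuum residual norms, and transfer the quadrature error bounds through the exponents $\tau_p/p_y$ and $\tau_d/p_z$ by subadditivity. Your explicit justification of the subadditivity step (noting that $\tau_p/p_y \leq 1$ requires both $\tau_p \leq 1$ and $p_y \geq 1$) is in fact slightly more careful than the paper's, which only invokes $\tau_p \leq 1$.
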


\begin{proof}
For notational simplicity, we denote $\res = \res_{\theta^{\ast}}$, the residual \eqref{eq:res1}, corresponding to the trained neural network $\bu^{\ast}$. Similarly, $\res_d = \res_{d,\theta^{\ast}}$ is the data residual \eqref{eq:resd}, corresponding to $\bu^{\ast}$.

As $\bu$ solves the PDE \eqref{eq:pde} and $\res$ is defined by \eqref{eq:res1}, we easily see that, 
\begin{equation}
\label{eq:pf1}
\res = \df(\bu^{\ast}) -\df(\bu).
\end{equation}
Similarly, $\bu$ satisfies the data relation \eqref{eq:dt} and by definition \eqref{eq:resd}, we have,
\begin{equation}
\label{eq:pf2}
\res_d = \map(\bu^{\ast}) -\map(\bu).
\end{equation}
By our assumptions, PINN $\bu^{\ast} \in \hat{X}$, hence, we can directly apply the \emph{conditional stability estimate} \eqref{eq:assm} and use \eqref{eq:pf1}, \eqref{eq:pf2} to obtain,
\begin{equation}
    \label{eq:pf3}
\begin{aligned}
\er_G(E) &=    \|\bu - \bu^{\ast}\|_{L^{p_x}(E)} \quad ({\rm by})~\eqref{eq:egen}, \\
&\leq C_{pd} \left(\|\df(\bu^{\ast}) - \df(\bu) \|^{\tau_p}_{Y} + \|\map(\bu) - \map(\bu^{\ast})\|^{\tau_d}_Z \right)  \quad ({\rm by})~\eqref{eq:assm}, \\
&\leq C_{pd} \left(\|\res\|^{\tau_p}_{Y} + \|\res_d\|^{\tau_d}_{Z}\right) \quad ({\rm by})~\eqref{eq:pf1},\eqref{eq:pf2}.
\end{aligned}
\end{equation}
By the fact that $Y = L^{p_y}(\dom)$, the definition of the training error \eqref{eq:etrn} and the quadrature rule \eqref{eq:quad}, we see that,
\begin{align*}
\|\res\|^{p_y}_{Y} \approx \sum\limits_{i=1}^{N_{int}} w_i |\res_{\theta^{\ast}}|^{p_y}  = \er_{p,T}^{p_y}.
\end{align*}
Hence, the training error component $\er_{p,T}$ is a quadrature for the residual \eqref{eq:res1} and the resulting quadrature error, given by \eqref{eq:qassm} translates to,
\begin{align*}
   \|\res\|^{p_y}_{Y} \leq \er_{p,T}^{p_y} + C_{q}N_{int}^{-\alpha},
\end{align*}
and as $\tau_p \leq 1$
\begin{equation}
    \label{eq:pf4}
   \|\res\|^{\tau_p}_{Y} \leq \er_{p,T}^{\tau_p} + C_{q}^{\frac{\tau_p}{p_y}}N_{int}^{-\frac{\alpha\tau_p}{p_y}}.
\end{equation}
Similarly as $Z = L^{p_z}(\dom^{\prime})$, the definition of the training error \eqref{eq:etrn} and the quadrature rule \eqref{eq:dqd}, we have that,
\begin{align*}
\|\res_d\|^{p_z}_{Z} \approx \sum\limits_{j=1}^{N_d} w^d_j |\res_{d,\theta^{\ast}}|^{p_z}  = \er_{d,T}^{p_z}.
\end{align*}
Hence, the training error component $\er_{d,T}$ is a quadrature for the residual \eqref{eq:resd} and the resulting quadrature error, given by \eqref{eq:dqassm} leads to,
\begin{align*}
   \|\res_d\|^{p_z}_{Z} \leq \er_{d,T}^{p_z} + C_{qd}N_d^{-\alpha_d},
\end{align*}
and as $\tau_d \leq 1$
\begin{equation}
\label{eq:pf5}
   \|\res_d\|^{\tau_d}_{Z} \leq \er_{d,T}^{\tau_d} + C_{qd}^{\frac{\tau_d}{p_z}}N_d^{-\frac{\alpha_d\tau_d}{p_z}}.
\end{equation}
Substituting \eqref{eq:pf4} and \eqref{eq:pf5} into \eqref{eq:pf3} yields the desired bound \eqref{eq:egenb}. 
\end{proof}
We term a PINN, generated by the algorithm \ref{alg:PINN} to be \emph{well-trained} if the following condition hold,
\begin{equation}
    \label{eq:wtn}
    \max\left\{\er_{p,T}^{\tau_p},\er_{d,T}^{\tau_d}\right\} \leq C_{q}^{\frac{\tau_p}{p_y}}N_{int}^{-\frac{\alpha \tau_p}{p_y}} + C_{qd}^{\frac{\tau_d}{p_z}}N_d^{-\frac{\alpha_d \tau_d}{p_z}}.
\end{equation}
Thus, a well-trained PINN is one for which the training errors are smaller than the so-called \emph{generalization gap} (given by the rhs of \eqref{eq:wtn}).

The following remarks about Theorem \ref{thm:1} are in order.
\begin{remark}
The estimate \eqref{eq:egenb} indicates mechanisms that underpin possible efficient approximation of solutions of inverse (unique continuation, data assimilation) problems by PINNs as it breaks down the sources of error into the following parts,
\begin{itemize}
    \item The PINN has to be well-trained i.e, the training error $\er_T$ has to be sufficiently small. Note that we have no a priori control on the training error but can compute it \emph{a posteriori}. 
    \item The class of approximating PINNs has to be sufficiently regular such that the residuals in \eqref{eq:resd}, \eqref{eq:res1} can be approximated to high accuracy by the quadrature rules \eqref{eq:quad},\eqref{eq:dqd}. This regularity of PINNs can be enforced by choosing smooth activation functions such as the sigmoid and hyperbolic tangent in \eqref{eq:ann1}.
    \item Finally, the whole estimate \eqref{eq:egenb} relies on the conditional stability estimate \eqref{eq:assm} for the inverse problem \eqref{eq:pde}, \eqref{eq:dt}. Thus, the generalization error estimate leverages conditional stability for inverse problems of PDEs into efficient approximation by PINNs. This is very similar to the program in a recent paper \cite{MM1} for forward problems of PDEs. 
\end{itemize}
\end{remark}
\begin{remark}
We note that the estimate \eqref{eq:egenb} on the error due to PINNs contains the constants $C_{pd}, C_{q},C_{qd}$. These constants depend on the underlying solution but also on the trained neural network $\bu^{\ast}$ and on the residuals. For any given number of training samples $N_{int},N_d$, these constants are bounded as the underlying neural networks and residuals are smooth functions on bounded domains. However, as $N_{int}, N_d \rightarrow \infty$, these constants might blow-up. As long as these constants blow up at rates that are slower wrt $N_{int}, N_d$ than the decay terms in \eqref{eq:egenb}, one can expect that the overall bound \eqref{eq:egenb} still decays to zero as $N_{int},N_d \rightarrow \infty$. In practice, one has a finite number of training samples and the bounds on the constants can be verified a posteriori from the computed residuals and trained neural networks. 
\end{remark}
\begin{remark}
The estimate \eqref{eq:egenb} was based on determinitic quadrature rules \eqref{eq:quad}, \eqref{eq:dqd}. Following section 2.4.1 of recent paper \cite{MM1}, it can be extended, in a straightforward manner, to the case of randomly chosen training points that stem from Monte Carlo quadrature. 
\end{remark}
\begin{remark}
\label{rmk:nse}
We have considered a noiseless measurement in the data term \eqref{eq:dt} on the observation domain. However, the bound \eqref{eq:egenb} can be readily extended to cover the noisy case in the following manner. We assume that the measurements for the inverse problem are given by,
\begin{equation}
    \label{eq:dtn}
    \map(\bu) = \g + \eta,
\end{equation}
for a noise term $\eta \in Z$. Then, a straightforward modification of the argument in the proof of Theorem \ref{thm:1} yields the bound,
\begin{equation}
    \label{eq:egenbn}
    \er_G \leq C_{pd} \left(\er_{p,T}^{\tau_p} + \er_{d,T}^{\tau_d}+ \|\eta\|^{\tau_d}_{L^{p_z}(\dom^{\prime})} +  C_{q}^{\frac{\tau_p}{p_y}}N_{int}^{-\frac{\alpha \tau_p}{p_y}} + C_{qd}^{\frac{\tau_d}{p_z}}N_d^{-\frac{\alpha_d \tau_d}{p_z}} \right),
\end{equation}
with constants already defined in \eqref{eq:egenb}. Thus, as long as the noise term is small in magnitude, the PINN will still efficiently approximate solution of the inverse problem. However, for $N_{int},N_{d}$ sufficiently small, the noise term will dominate \eqref{eq:egenbn} and one has to use a Bayesian framework to approximate solutions of the inverse problem in this regime.  
\end{remark}
\section{Poisson's equation}
\label{sec:3}
As a first example for the abstract inverse problem \eqref{eq:pde}, \eqref{eq:dt}, we consider the Poisson's equation as a model problem for linear elliptic PDEs.  
\subsection{The underlying inverse problem}
Let $D \subset \R^d$ be an open, bounded, simply connected set with smooth boundary $\bD$\footnote{Further geometric conditions on the boundary might be necessary for obtaining bounds on the quadrature error}. We consider the Poisson's equation on this domain,
\begin{equation}
    \label{eq:ps}
    -\Delta u = f, \quad \forall x \in D,
\end{equation}
with $\Delta$ denoting the Laplace operator and $f \in L^2(D)$ being a source term. We will assume that $u \in H^1(D)$ will satisfy the Poisson's equation \eqref{eq:ps} in the following weak sense, 
\begin{equation}
    \label{eq:wkps}
    \int\limits_{D} \nabla u \cdot \nabla v dx = \int\limits_{D} f v dx,
\end{equation}
for all test functions $v \in H^1_0(D)$. Note that \eqref{eq:wkps} follows as a consequence of multiplying the test function $v \in H^1_0(D)$ and integrating by parts. Moreover, the PDE \eqref{eq:ps} is not well-posed as $u$ is not necessarily in $H^1_0(D)$.

The unique continuation (data assimilation) inverse problem in this case is given by,
\begin{equation}
    \label{eq:dtps}
    u|_{D^{\prime}} = g, \quad {\rm for~some~} D^{\prime} \subset D, 
\end{equation}
with $g \in H^1(D^{\prime})$ and the observation domain $D^{\prime}$ being open, simply connected and with a smooth boundary $\bD^{\prime}$.

Formally, for the unique continuation problem \eqref{eq:ps}, \eqref{eq:dtps}, to have a solution, it is necessary that $g$ satisfies the Poisson's equation \eqref{eq:ps} in $D^{\prime}$. Hence, the unique continuation problem is formally equivalent to,
\begin{equation}
    \label{eq:ecp}
     \begin{aligned}
     -\Delta u &=f, \quad \forall x \in D \setminus D^{\prime}, \\
     u &= g, \quad \forall x \in \bD^{\prime}, \\
     \nabla u\cdot \nl &= \nabla g \cdot \nl, \quad \forall x \in \bD^{\prime}.
     \end{aligned}    
\end{equation}
The problem \eqref{eq:ecp} is termed as the \emph{Elliptic Cauchy problem} and was already studied by Hadamard as an example of ill-posed problems for PDEs.

Well-posedness results for the inverse problem \eqref{eq:ps}, \eqref{eq:dtps} are classical, see \cite{ALE1} for a detailed survey. Here, we follow the slightly simplified presentation due to \cite{BO1} and state the following result, 
\begin{theorem}
\label{thm:p1}
[\cite{BO1}, Theorem 7.1]: Let $f \in L^2(D)$ and let $u \in H^1(D)$ such that \eqref{eq:wkps} holds for all test functions $v \in H^1_0(D)$. Let $g \in H^1(D^{\prime})$ such that \eqref{eq:dtps} holds, then for every open simply connected set $E \subset D$ such that ${\rm dist}(E, \bD) > 0$, there holds,
\begin{equation}
    \label{eq:psst1}
    \|u\|_{H^1(E)} \leq C\left(\|u\|_{H^1(D)}\right) \omega \left(\|f\|_{L^2(D)} + \|g \|_{L^2(D^{\prime})}\right).
\end{equation}
Here, $C(R) = C R^{1-\tau}$ and $\omega(R) = R^{\tau}$, for some absolute constant $C$ and exponent $\tau \in (0,1)$, depending on the set $E$.

Moreover, we have the global stability estimate,
\begin{equation}
    \label{eq:psst2}
    \|u\|_{H^1(D)} \leq C\left(\|u\|_{H^1(D)}\right) \omega \left(\|f\|_{L^2(D)} + \|g \|_{L^2(D^{\prime})}\right),
\end{equation}
with the same $C(R)$ as in \eqref{eq:psst1}, but with modulus of continuity given by $\omega(R) = |\log(R)|^{-\tau}$, with $\tau \in (0,1).$
\end{theorem}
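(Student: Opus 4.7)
Since the authors cite this as Theorem 7.1 of [BO1], I will only sketch the standard route. The plan is to leverage the classical Carleman/three-spheres machinery for second order elliptic operators, combined with a chain-of-balls argument to propagate smallness from $D^{\prime}$ to $E$, and then to the whole of $D$ at the price of degrading the modulus of continuity.

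First, I would derive (or invoke) a Carleman estimate for the Laplacian with a radial weight $\phi(x) = -\log|x - x_0|$ (or a smoothed pseudoconvex variant), valid for functions compactly supported away from $x_0$. Applied to $\chi u$ for suitable cutoffs $\chi$, this produces the standard three-spheres inequality: for nested balls $B_{r_1}(x_0) \subset B_{r_2}(x_0) \subset B_{r_3}(x_0) \subset D$, any $u$ satisfying $-\Delta u = f$ obeys
\begin{equation*}
\|u\|_{L^2(B_{r_2})} \;\leq\; C\,\bigl(\|u\|_{L^2(B_{r_1})} + \|f\|_{L^2(B_{r_3})}\bigr)^{\theta} \,\|u\|_{L^2(B_{r_3})}^{1-\theta},
\end{equation*}
for some $\theta = \theta(r_1,r_2,r_3) \in (0,1)$. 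This is the core quantitative unique continuation statement; everything else is iteration and bookkeeping.

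Next, to obtain the interior estimate \eqref{eq:psst1}, I would cover $E$ by finitely many balls and, for each such ball, build a chain of overlapping balls of radius comparable to $\mathrm{dist}(E,\partial D)$ that connects it to a ball contained in $D^{\prime}$. Iterating the three-spheres inequality along this chain gives, after a finite number $N$ of steps (depending only on $E$ and $D^{\prime}$), an estimate of the form
\begin{equation*}
\|u\|_{L^2(E)} \;\leq\; C\,\|u\|_{L^2(D)}^{1-\tau}\,\bigl(\|u\|_{L^2(D^{\prime})} + \|f\|_{L^2(D)}\bigr)^{\tau},
\end{equation*}
with $\tau = \theta^N \in (0,1)$. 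Rewriting this as $C(R)\,\omega(R)$ with $C(R) = CR^{1-\tau}$ and $\omega(R) = R^{\tau}$ yields the interior bound in $L^2$; to upgrade to the $H^1(E)$ norm I would use a standard Caccioppoli inequality on a slightly enlarged interior set $E \Subset E' \Subset D$, which costs only a constant and the $L^2$ norm of $f$. Finally, $\|g\|_{L^2(D^{\prime})}$ replaces $\|u\|_{L^2(D^{\prime})}$ by the data condition \eqref{eq:dtps}.

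The main obstacle, as usual in such stability problems, is the global estimate \eqref{eq:psst2} up to $\partial D$. Here the chain-of-balls must run up to the boundary, and the radii of the balls must shrink as one approaches $\partial D$; the number $N$ of iterations then depends on the distance to the boundary, so $\tau = \theta^N$ degenerates. Quantifying this degeneration by optimising the chain (or equivalently interpolating between interior H\"older stability and the trivial a priori bound $\|u\|_{H^1(D)}$) yields the logarithmic modulus $\omega(R) = |\log R|^{-\tau}$. This last optimisation is the delicate step; the rest is essentially bookkeeping of Carleman weights and cutoffs.
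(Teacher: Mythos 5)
The paper does not prove this theorem: it is imported verbatim as Theorem 7.1 of \cite{BO1}, with the remark that the proof rests on the three-balls inequality and that the constants are derived in Theorem 4.4 of \cite{ALE1}. Your sketch (Carleman weight $\Rightarrow$ three-spheres inequality $\Rightarrow$ chain-of-balls propagation for the interior H\"older estimate, with logarithmic degeneration of the modulus as the chain approaches $\partial D$ for the global bound) is precisely the standard route those references follow, so it is consistent with the paper's attribution; it is of course a sketch rather than a self-contained proof, but the paper itself offers nothing more than the citation.
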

The theorem, as presented in \cite{BO1} (Theorem 7.1) has slightly better estimates than \eqref{eq:psst1}, \eqref{eq:psst2}. In particular, on the rhs, the norm on the source term is the $H^{-1}$ norm. However, the current version of these estimates suffices for our purposes here. Moreover, detailed derivation of the constants is given in Theorem 4.4 of \cite{ALE1} and the proof of \eqref{eq:psst1}, \eqref{eq:psst2} is based on the \emph{three-balls} inequality. 
\begin{remark}
Relating the above formulation of the unique continuation inverse problem \eqref{eq:ps}, \eqref{eq:dtps} to the abstract formalism presented in section \ref{sec:2}, we see that with $X = L^2(D), Y = L^2(D), Z = L^2(D^{\prime}$, the differential operator $\df = -\Delta$, interpreted weakly, and the observable $\map = ID$, it is straightforward to bound the abstract conditional stability estimate \eqref{eq:assm} in the concrete forms \eqref{eq:psst1} or \eqref{eq:psst2}. Thus, this problem falls squarely in the framework considered in section \ref{sec:2} and can be approximated by PINNs. 
\end{remark}
\subsection{PINNs}
In order to complete the algorithm \ref{alg:PINN} to generate a PINN for approximating the inverse problem \eqref{eq:ps}, \eqref{eq:dtps}, we need the following ingredients,
\subsubsection{Training sets}
As the training set $\train_{int}$ in algorithm \ref{alg:PINN}, we take a set of quadrature points $y_i \in D$, for $1 \leq i \leq N_{int}$, corresponding to the quadrature rule \eqref{eq:quad}. These can be quadrature points for a grid based (composite) Gauss quadrature rule or low-discrepancy sequences such as Sobol points. Similarly, the training set $\train_{d} = \{z_j \}$ for $z_j \in D^{\prime}$, with $1 \leq j \leq N_d$, are quadrature points, corresponding to the quadrature rule \eqref{eq:dqd}. 
\subsubsection{Residuals}
We will require that for parameters $\theta \in \Theta$, the neural networks $u_{\theta} \in C^k(D)$, for $k \geq 2$. This can be enforced by choosing a sufficiently regular activation function in \eqref{eq:ann1}. We define the following residuals that are needed in algorithm \ref{alg:PINN}. The PDE residual \eqref{eq:res1} is given by,
\begin{equation}
    \label{eq:resps}
    \res_{\theta} = -\Delta u_{\theta} - f, \quad \forall x \in D,
\end{equation}
and the data residual \eqref{eq:resd} on the observation domain is given by,
\begin{equation}
    \label{eq:resdps}
    \res_{d,\theta} = u_{\theta} - g, \quad \forall x \in D^{\prime}.
\end{equation}
\subsubsection{Loss functions}
In algorithm \ref{alg:PINN} for approximating the inverse problem \eqref{eq:ps}, \eqref{eq:dtps}, we will need the following loss function,
\begin{equation}
    \label{eq:lfps}
    J(\theta) = \sum\limits_{j=1}^{N_d} w^d_j|\res_{d,\theta}(z_j)|^2 + \lambda \sum\limits_{i=1}^{N_{int}} w_i|\res_{\theta}(y_i)|^2,
\end{equation}
with hyperparamter $\lambda$, residuals defined in \eqref{eq:resps}, \eqref{eq:resdps}, training points defined above and weights $w_i$, $w^d_j$, corresponding to quadrature rules \eqref{eq:quad} and \eqref{eq:dqd}, respectively. 
\subsection{Estimates on the generalization error}
We consider any $E \subset D$, with $E$ open, simply connected and such that $dist(E,\bD) > 0$ and define the generalization error with respect to the PINN $u^{\ast} = u_{\theta^{\ast}}$, generated by the algorithm \ref{alg:PINN} with training sets, residuals and loss functions described above, as 
\begin{equation}
    \label{eq:gerps}
    \er_G(E) = \|u - u^{\ast}\|_{H^1(E)}.
\end{equation}
As in theorem \ref{thm:1}, this generalization error will be estimated in terms of the following training errors, 
\begin{equation}
    \label{eq:etrnps}
    \er_{d,T} = \left(\sum\limits_{j=1}^{N_d} w^d_j|\res_{d,\theta^{\ast}}(z_j)|^2\right)^{\frac{1}{2}}, \quad \er_{p,T} = \left(\sum\limits_{i=1}^{N_{int}} w_i|\res_{\theta^{\ast}}(y_i)|^2\right)^{\frac{1}{2}}.
\end{equation}
Note that the training errors $\er_{p,T}$ and $\er_{d,T}$, can be readily computed from the loss functions \eqref{eq:lf2}, \eqref{eq:lfps}, a posteriori. We have the following estimate on the generalization error in terms of the training error,
\begin{lemma}
\label{lem:p1} 
For $f \in C^{k-2}(D)$ and $g \in C^k(D^{\prime})$, with continuous extensions of the functions and derivatives upto the boundaries of the underlying sets and with $k \geq 2$, Let $u \in H^1(D)$ be the solution of the inverse problem corresponding to the Poisson's equation \eqref{eq:ps} i.e, it satisfies \eqref{eq:wkps} for all test functions $v \in H^1_0(D)$ and satisfies the data \eqref{eq:dtps}. Let $u^{\ast} = u_{\theta^{\ast}} \in C^k(D)$ be a PINN generated by the algorithm \ref{alg:PINN}, with loss functions \eqref{eq:lf2}, \eqref{eq:lfps}. Then, the generalization error \eqref{eq:gerps} for any any $E \subset D$, with $E$ open, simply connected and such that $dist(E,\bD) > 0$ is bounded by, 
\begin{equation}
    \label{eq:pbd}
    \|u-u^{\ast}\|_{H^1(E)} \leq C \left(\|u\|^{1-\tau}_{H^1(D)}+\|u^{\ast}\|^{1-\tau}_{H^1(D)}\right)\left(\er_{p,T}^{\tau} + \er_{d,T}^{\tau} + C_{q}^{\frac{\tau}{2}}N_{int}^{-\frac{\alpha\tau}{2}} + C_{qd}^{\frac{\tau}{2}}N_d^{-\frac{\alpha_d \tau}{2}}\right),
\end{equation}
for some $\tau \in (0,1)$ and constant $C$ depending on $E$ and with constants $C_q = C_q\left(\|\res\|_{C^{k-2}(D)}\right)$ and $C_{qd} = C_{qd}\left(\|\res\|_{C^{k}(D^{\prime})}\right)$, given by the quadrature error bounds \eqref{eq:qassm}, \eqref{eq:dqassm}, respectively.

Moreover, we also have the global error bound, 
\begin{equation}
    \label{eq:pbd1}
    \|u-u^{\ast}\|_{H^1(D)} \leq C \left(\|u\|^{1-\tau}_{H^1(D)}+\|u^{\ast}\|^{1-\tau}_{H^1(D)}\right)\left| \log\left(\er_{p,T} + \er_{d,T} + C_{q}^{\frac{1}{2}}N_{int}^{-\frac{\alpha}{2}} + C_{qd}^{\frac{1}{2}}N_d^{-\frac{\alpha_d}{2}}\right)\right|^{-\tau}.
\end{equation}

\end{lemma}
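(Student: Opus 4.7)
The plan is to specialize the abstract generalization error estimate of Theorem \ref{thm:1} to the Poisson inverse problem using the conditional stability estimates \eqref{eq:psst1} and \eqref{eq:psst2} of Theorem \ref{thm:p1}. The key observation is that the stability bounds in Theorem \ref{thm:p1} are linear in the sense that they can be applied to the difference $w := u - u^{\ast}$, which lies in $H^1(D)$ (since $u \in H^1(D)$ and $u^{\ast} \in C^k(D)$). This difference satisfies, in the weak sense of \eqref{eq:wkps}, the Poisson equation with source $-\res_{\theta^{\ast}}$ (from \eqref{eq:resps}, since $-\Delta u = f$ and $-\Delta u^{\ast} = f + \res_{\theta^{\ast}}$), and on $D'$ it satisfies the data condition $w|_{D'} = -\res_{d,\theta^{\ast}}$ (from \eqref{eq:resdps}).

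First I would apply the local stability estimate \eqref{eq:psst1} to $w$ to get
\[
\|u-u^{\ast}\|_{H^1(E)} \leq C\bigl(\|u-u^{\ast}\|_{H^1(D)}\bigr)\,\omega\bigl(\|\res_{\theta^{\ast}}\|_{L^2(D)} + \|\res_{d,\theta^{\ast}}\|_{L^2(D')}\bigr),
\]
with $C(R) = C R^{1-\tau}$ and $\omega(R) = R^{\tau}$. The prefactor is controlled by the triangle inequality and the elementary bound $(a+b)^{1-\tau} \leq a^{1-\tau} + b^{1-\tau}$ for $\tau \in (0,1)$, giving the factor $\|u\|_{H^1(D)}^{1-\tau} + \|u^{\ast}\|_{H^1(D)}^{1-\tau}$ that appears in \eqref{eq:pbd} and \eqref{eq:pbd1}.

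Next I would convert the continuous $L^2$ residual norms into training errors via the quadrature assumptions. The regularity hypotheses $f \in C^{k-2}(D)$, $g \in C^k(D')$ and $u^{\ast} \in C^k(D)$ ensure that $|\res_{\theta^{\ast}}|^2 \in C^{k-2}(D)$ and $|\res_{d,\theta^{\ast}}|^2 \in C^k(D')$, so the quadrature bounds \eqref{eq:qassm} and \eqref{eq:dqassm} apply. This yields
\[
\|\res_{\theta^{\ast}}\|_{L^2(D)}^2 \leq \er_{p,T}^2 + C_q N_{int}^{-\alpha}, \qquad \|\res_{d,\theta^{\ast}}\|_{L^2(D')}^2 \leq \er_{d,T}^2 + C_{qd} N_d^{-\alpha_d},
\]
from which a square root (using $\sqrt{a+b} \leq \sqrt{a}+\sqrt{b}$) gives bounds on the $L^2$ norms in terms of $\er_{p,T}, \er_{d,T}$ and the quadrature decay rates. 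Applying $\omega(R)=R^{\tau}$ and using sub-additivity of $R \mapsto R^{\tau}$ for $\tau \in (0,1)$ distributes the exponent over the four summands, producing exactly the right-hand side of \eqref{eq:pbd}. For the global estimate \eqref{eq:pbd1}, the same chain is applied using \eqref{eq:psst2} with $\omega(R) = |\log R|^{-\tau}$; here sub-additivity is not needed, since $|\log(\cdot)|^{-\tau}$ is monotone increasing for small arguments and we simply leave the sum under the logarithm, as in the statement.

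I do not foresee a serious obstacle — this is a direct instantiation of Theorem \ref{thm:1} in the Poisson setting. The only mild technical point is making sure the prefactor in front of $\omega$ is handled cleanly: the estimate \eqref{eq:psst1} furnishes a constant depending on $\|w\|_{H^1(D)}$, not on $\|u\|_{H^1(D)}$ and $\|u^{\ast}\|_{H^1(D)}$ separately, so one should write out the triangle inequality and the concavity bound $(a+b)^{1-\tau}\leq a^{1-\tau}+b^{1-\tau}$ explicitly. A second small point to note is that, for \eqref{eq:pbd1}, the bound is meaningful only once the argument of the logarithm is less than $1$, which will be true whenever the PINN is well-trained in the sense of \eqref{eq:wtn} and $N_{int}, N_d$ are sufficiently large; otherwise the stated estimate becomes vacuous but remains formally true.
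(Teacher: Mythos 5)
Your proposal is correct and follows essentially the same route as the paper: form the difference $\hat{u}=u^{\ast}-u$, observe by linearity that it solves the Poisson problem weakly with source $\res_{\theta^{\ast}}$ and data $\res_{d,\theta^{\ast}}$ on $D^{\prime}$, apply the conditional stability estimates \eqref{eq:psst1} and \eqref{eq:psst2}, split the prefactor via $(a+b)^{1-\tau}\leq a^{1-\tau}+b^{1-\tau}$, and replace the continuous residual norms by training errors plus quadrature-error terms using \eqref{eq:qassm} and \eqref{eq:dqassm}. Your added remarks on the subadditivity of $R\mapsto R^{\tau}$ and on the monotonicity caveat for the logarithmic modulus are sensible elaborations of steps the paper leaves implicit.
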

\begin{proof}
For notational simplicity, we denote $\res = \res_{\theta^{\ast}}$ and $\res_d = \res_{d,\theta^{\ast}}$. 

Define $\hat{u} = u^{\ast} - u \in H^1(D)$, by linearity of the differential operator in \eqref{eq:ps} and the data observable in \eqref{eq:dtps} and by definitions \eqref{eq:resps}, \eqref{eq:resdps}, we see that $\hat{u}$ satisfies,
\begin{equation}
    \label{eq:hps}
    \begin{aligned}
    -\Delta \hat{u} &= \res, \quad \forall x \in D, \\
    \hat{u} &= \res_d, \quad \forall x \in D^{\prime},
    \end{aligned}
\end{equation}
with Poisson equation being satisfied in the following weak sense,
\begin{equation}
    \label{eq:wkhps}
    \int\limits_{D} \nabla \hat{u} \cdot \nabla v dx = \int\limits_{D} \res v dx,
\end{equation}
for all test functions $v \in H^1_0(D)$. Hence, we can directly apply the conditional stability estimate \eqref{eq:psst1} to obtain for some $\tau \in (0,1)$,
\begin{equation}
    \label{eq:pl1}
    \begin{aligned}
    \|\hat{u}\|_{H^1(E)} &\leq C( \|\hat{u}\|^{1-\tau}_{H^1(D)}) \left(\|\res_d\|_{L^2(D^{\prime})} + \|\res\|_{L^2(D)} \right)^{\tau} \\
    &\leq C \left(\|u\|^{1-\tau}_{H^1(D)}+\|u^{\ast}\|^{1-\tau}_{H^1(D)}\right)\left(\|\res_d\|_{L^2(D^{\prime})} + \|\res\|_{L^2(D)} \right)^{\tau}.
    \end{aligned}
\end{equation}
Recognizing that the training errors $\er^2_{d,T}$ and $\er_{p,T}^2$ are the quadratures for $\|\res_d\|^2_{L^2(D^{\prime})}$ and $\|\res\|^2_{L^2(D)}$ with respect to the quadrature rules \eqref{eq:dqd} and \eqref{eq:quad}, respectively and using bounds \eqref{eq:dqassm} and \eqref{eq:qassm} yields,
\begin{equation}
    \label{eq:pl2}
    \begin{aligned}
    \|\res_d\|_{L^2(D^{\prime})} &\leq \er_{d,T} + C_{qd}^{\frac{1}{2}}N_d^{\frac{\alpha_d}{2}}, \\
    \|\res\|_{L^2(D)} &\leq \er_{p,T} + C_{q}^{\frac{1}{2}}N_{int}^{\frac{\alpha}{2}},
    \end{aligned}
\end{equation}
with constants $C_q = C_q\left(\|\res\|_{C^{k-2}(D)}\right)$ and $C_{qd} = C_{qd}\left(\|\res_d\|_{C^{k}(D^{\prime})}\right)$.

It is straightforward to obtain the desired inequality \eqref{eq:pbd} by substituting \eqref{eq:pl2} into \eqref{eq:pl1}.

The bound \eqref{eq:pbd1} can be obtained, completely analogously, by replacing \eqref{eq:psst1} in \eqref{eq:pl1} with \eqref{eq:psst2}. 
\end{proof}
\begin{table}[htbp] 
    \centering
    \renewcommand{\arraystretch}{1.1} 
    
    \footnotesize{
        \begin{tabular}{ c c c c c } 
            \toprule
            \bfseries $K-1$  & \bfseries $\tilde{d}$  &\bfseries $q$ & \bfseries $\lambda_{reg}$  &\bfseries $\lambda$\\ 
            \midrule
            \midrule
              4, 8, 10  & 16, 20, 24 & 2& 0, $10^{-6}$& 0.001, 0.01, 0.1, 1\\

            \bottomrule
        \end{tabular}
    \caption{Hyperparamters configurations used for ensemble training inn all the numerical experiments.}
        \label{tab:1}
    }
\end{table}
\begin{figure}[h!]
        \centering
        \includegraphics[width=8cm]{{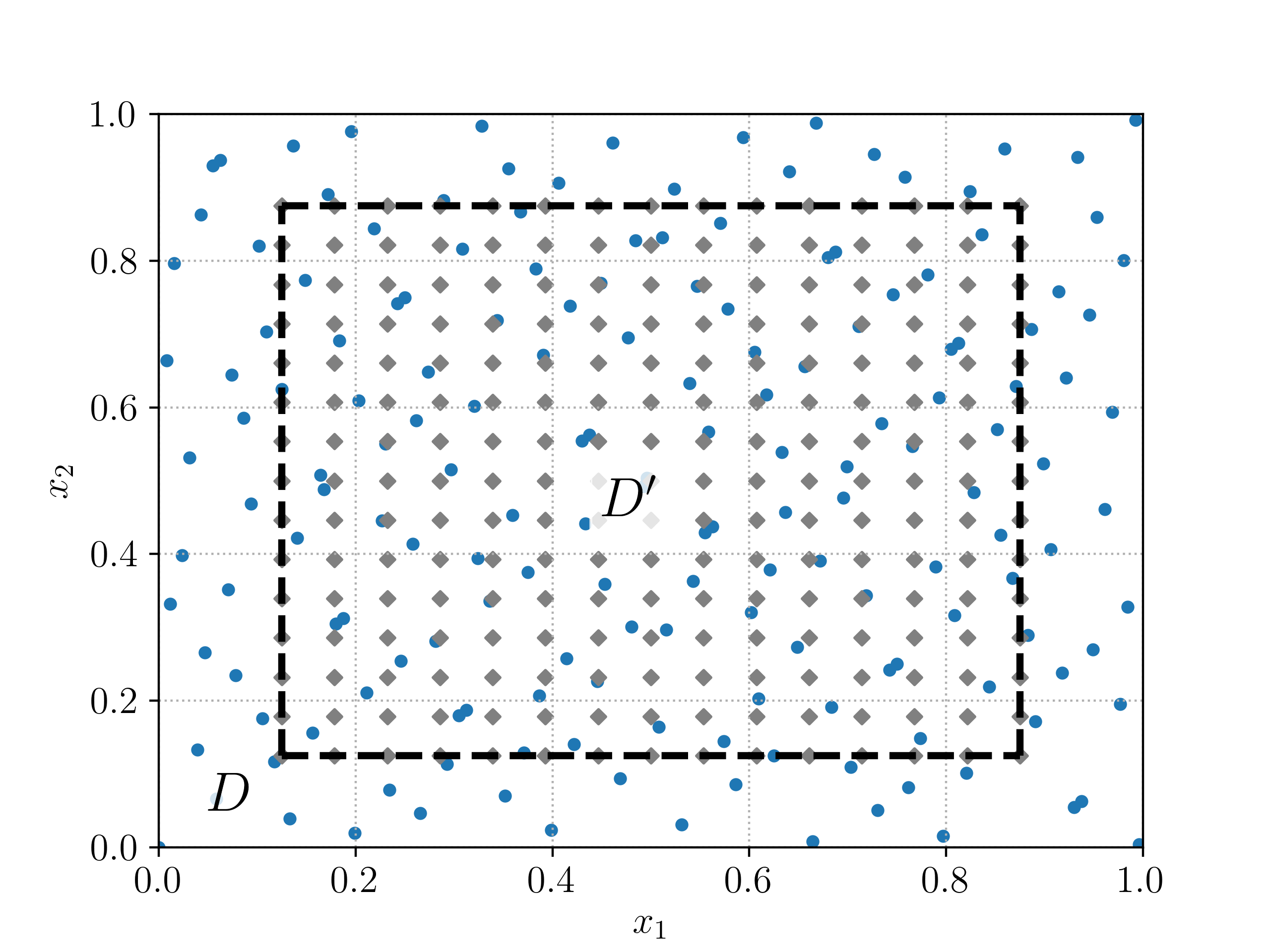}}
    \caption{The domains $D,D^{\prime}$ for the Poission's equation numerical experiment. Training set $\train_{int}$ are Sobol points (blue dots) and training set $\train_d$ are Cartesian grid points (grey squares).}
\label{fig:p1}
\end{figure}

\subsection{Numerical experiments}
We follow \cite{BO1} and repeat their experiment from section 7.4.3. As in \cite{BO1}, the unique continuation problem for the Poisson equation is solved by considering the PDE \eqref{eq:ps} in the domain $D = (0,1)^2$ (see figure \ref{fig:p1}), with source term,
\begin{equation}
    \label{eq:pf}
 f(x_1,x_2) = -60\Big(x_1-x_1^2 + x_2-x_2^2\Big).
\end{equation}
The data \eqref{eq:dtps} will be given in the observation domain (see figure \ref{fig:p1}),
\begin{equation}
    \label{eq:pd}
    D^{\prime} =  \{(x_1,x_2)\in \R^2 : |x_1-0.5|<0.375; ~ |x_2-0.5|<0.375\}.
\end{equation}
In order to generate the data term $g$ in \eqref{eq:dtps}, we find that,
\begin{equation}
    \label{eq:pex}
    u(x_1,x_2) = 30x_1x_2(1-x_1)(1-x_2),
\end{equation}
is an exact solution of the Poisson equation \eqref{eq:ps}, with the source term \eqref{eq:pf} and let $g = u|_{D^{\prime}}$, with $D^{\prime}$ \eqref{eq:pd}, as the data term in \eqref{eq:dtps}. Clearly the exact solution $u$ \eqref{eq:pex} and inputs $f,g$ are as regular as required in Lemma \ref{lem:p1}. 
\begin{figure}[h!]

    \begin{subfigure}{.33\textwidth}
        \centering
        \includegraphics[width=1\linewidth]{{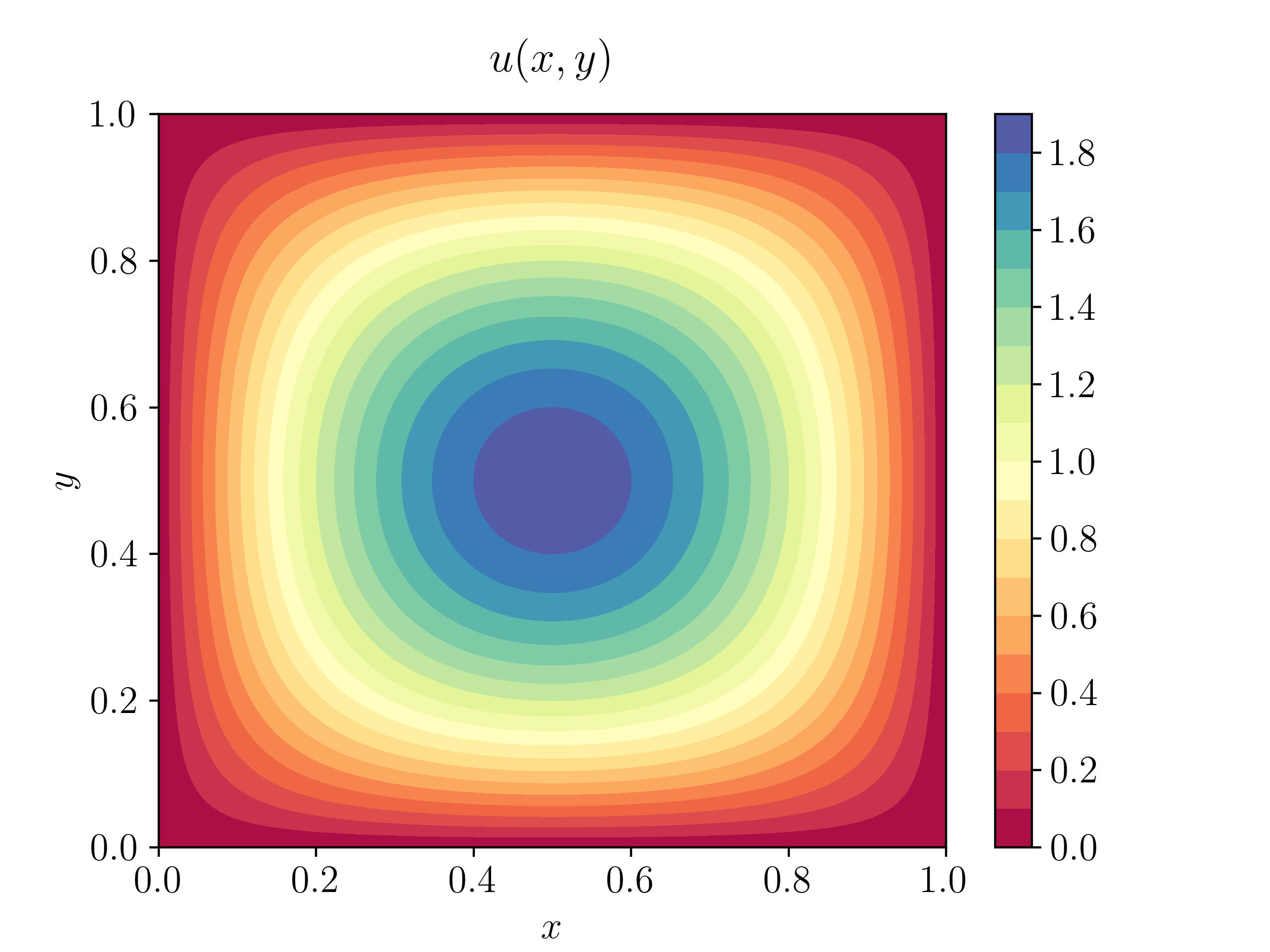}}
        \caption{Exact Solution $u$}
    \end{subfigure}
    \begin{subfigure}{.33\textwidth}
        \centering
        \includegraphics[width=1\linewidth]{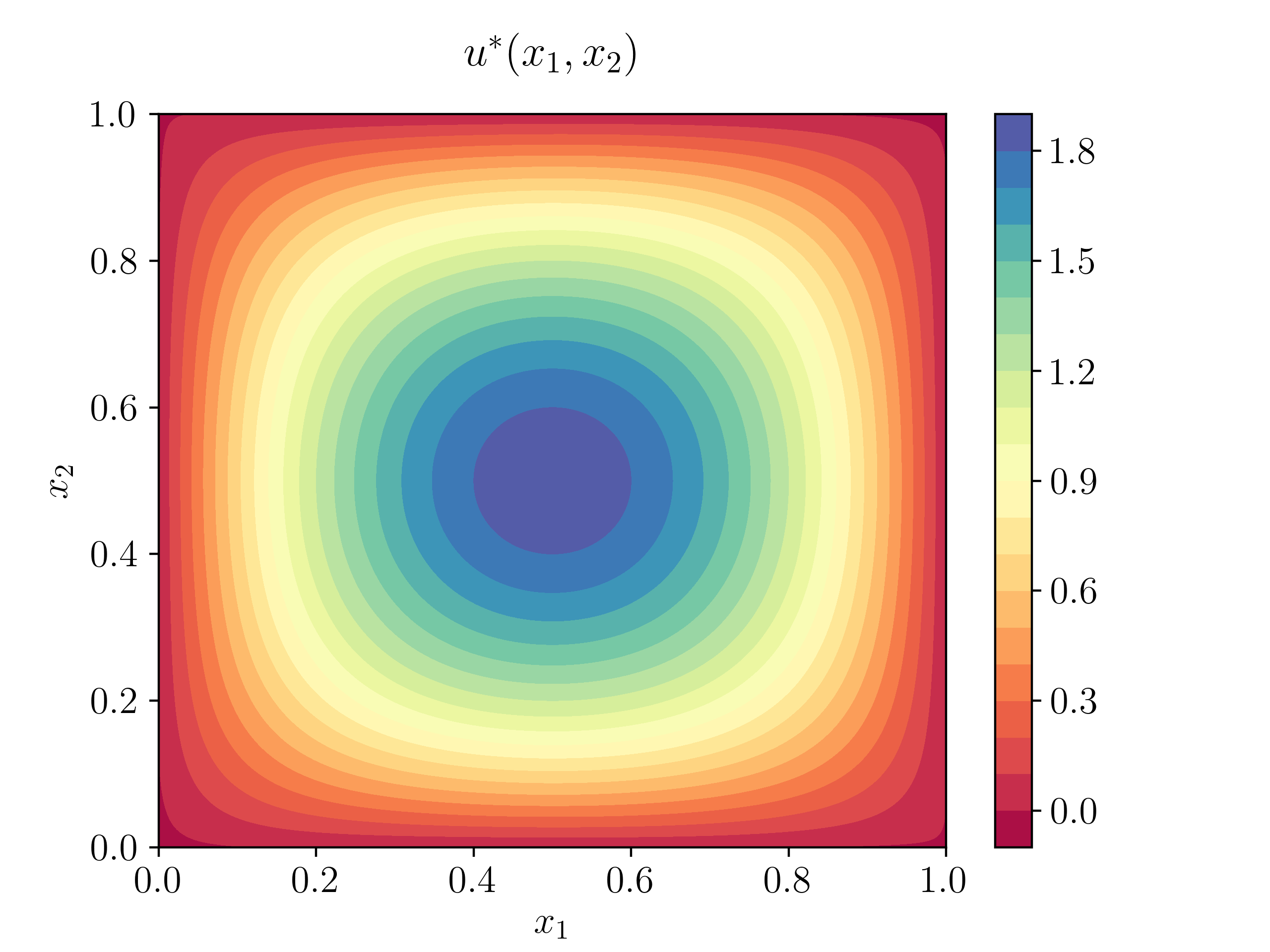}
        \caption{PINN $u^{\ast}$ }
    \end{subfigure}
     \begin{subfigure}{.33\textwidth}
        \centering
        \includegraphics[width=1\linewidth]{{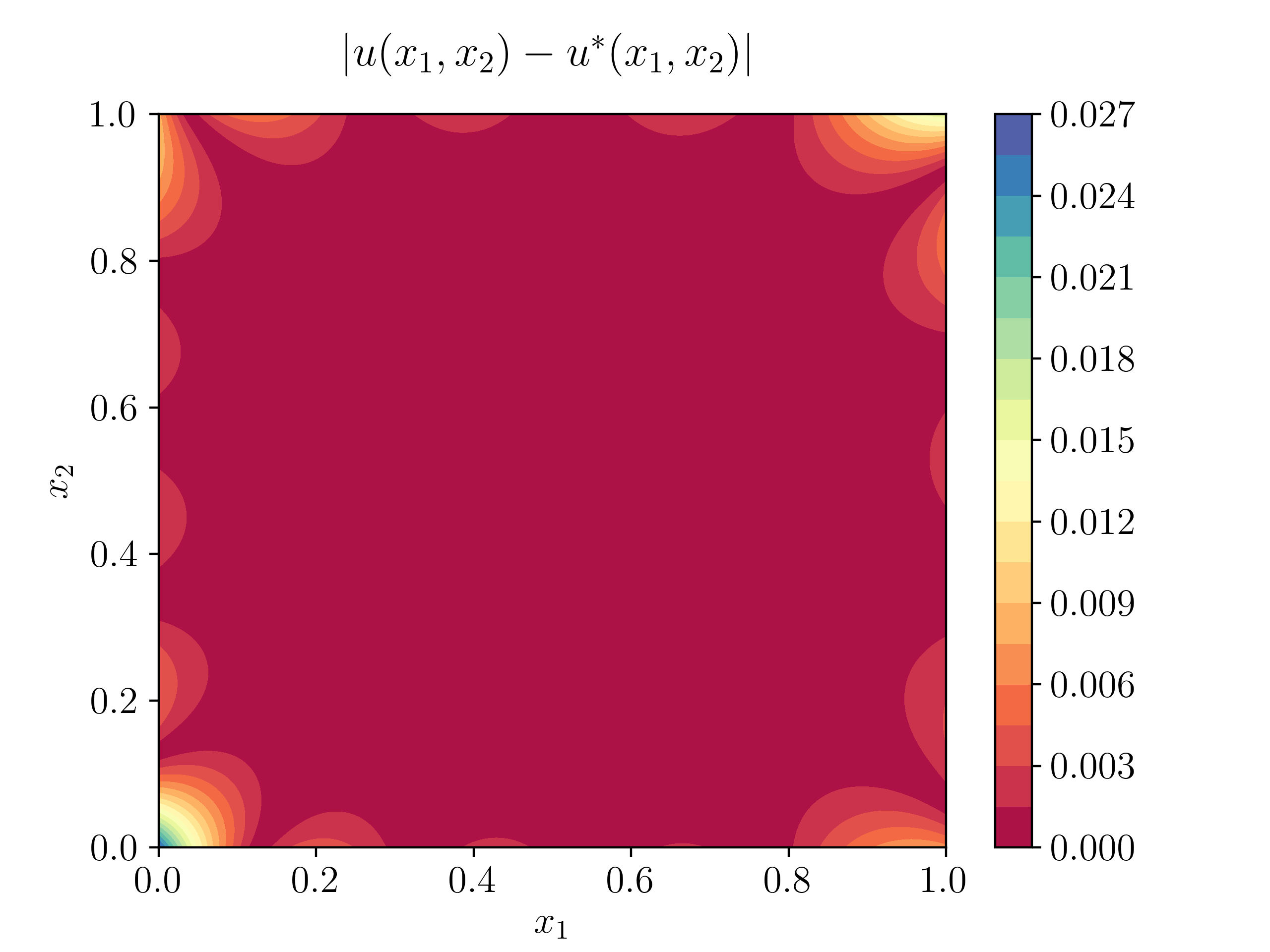}}
        \caption{Relative error $\frac{|\hat{u}|}{\|u\|_{L^2}}$}
    \end{subfigure}
  \caption{Comparison of the Exact solution, the PINN approximation and norm of the resulting error for the unique continuation problem for the Poisson's equation, with $N= 20 \times 20$ training points.}
\label{fig:p2}
\end{figure}
\begin{table}[htbp] 
    \centering
    \renewcommand{\arraystretch}{1.1} 
    
    \footnotesize{
        \begin{tabular}{c  c c c c  c c c c c  } 
            \toprule
             $N$  &\bfseries $K-1$ & \bfseries $\tilde{d}$  &$\lambda_{reg}$&\bfseries $\lambda$&  $\er_T$ & $||u - u^\ast||_{L^2}$   & $||u - u^\ast||_{H^1}$  \\ 
            \midrule
            \midrule
            $20\times20$     &4 &24&0.0& 0.001   & 0.0008& 0.28 $\%$ &1.1 $\%$ \\
        
            \midrule 
            $40\times40$  &4 &24&0.0& 0.001              &0.0006  & 0.25 $\%$ &1.0 $\%$ \\
                \midrule 
            $80\times80 $    &4 &24&0.0& 0.001   &0.00053 & 0.24 $\%$ & 0.9 $\%$ \\
                \midrule 
            $160\times160$    &4 &24&0.0& 0.001   &0.00043 & 0.2 $\%$ &0.8 $\%$ \\
            \bottomrule
        \end{tabular}
        \caption{Poisson's equation: relative percentage generalization errors and training errors for different numbers of training points. }
        \label{tab:p1}
    }
\end{table}

We will generate PINNs by algorithm \ref{alg:PINN}, with loss functions \eqref{eq:lfps}, corresponding to training sets $\train_{int},\train_d$. For $\train_{int}$, we will use Sobol points on the domain $D$ and for $\train_d$, we set up a Cartesian grid on the inner square $D^{\prime}$ \eqref{eq:pd}, see figure \ref{fig:p1} for a representation of both training sets. Note that a simple midpoint rule is used as the quadrature rule \eqref{eq:dqd}, whereas a quasi-Monte Carlo integration is used in \eqref{eq:quad}. 

As the PINNs in algorithm \ref{alg:PINN}, contain several hyperparameters namely, the number of hidden layers $K-1$ (depth) in \eqref{eq:ann1}, number of neurons $d_k\equiv \tilde{d}$ (width) in each hidden layer, the exponent $q$ of the regularization term in loss function \eqref{eq:lf2}, the size $\lambda_{reg}$ of the regularization term and the hyperparameter $\lambda$ in the loss function \eqref{eq:lfps}. We follow \cite{LMR1} and perform an \emph{ensemble training}, with the hyperparameter range specified in Table \ref{tab:1}  to find the best hyperparameters for each experiment. To do so, for each hyperparameter configuration, we will run the BFGS-B optimizer with $30$ randomly selected starting values and select the configuration resulting in the smallest value of the average training loss $\er_T = \sqrt{\er
^2_{d,T} + \lambda\er^2_{p,T}}$ over the retrainings. 

\begin{table}[htbp] 
    \centering
    \renewcommand{\arraystretch}{1.1} 
    
    \footnotesize{
        \begin{tabular}{c  c c c c  c c c c  } 
            \toprule
             $N$  &\bfseries $K-1$ & \bfseries $\tilde{d}$  &$\lambda_{reg}$&\bfseries $\lambda$&  $\er_T$ & $||u - u^\ast||_{L^2}$   & $||u - u^\ast||_{H^1}$  \\ 
            \midrule
            \midrule
            $20\times20$     &4 &24&0.0& 0.001   & 0.0125& 0.70 $\%$ &2.3 $\%$ \\
            \midrule 
            $40\times40$  &4 &24&0.0& 0.001              &0.0127  & 0.53 $\%$ &1.9 $\%$ \\
                \midrule 
            $80\times80 $    &4 &24&0.0& 0.001   &0.0127 & 0.45 $\%$ & 1.7 $\%$ \\
                \midrule 
            $160\times160$    &4 &24&0.0& 0.001   &0.013 & 0.34 $\%$ &1.3 $\%$ \\
            \bottomrule
        \end{tabular}
        \caption{Poisson's equation with noisy measurements: relative percentage generalization errors and training errors for different numbers of training points. }
        \label{tab:p2}
    }
\end{table}

We set $r = {\rm meas}(D^{\prime}) = 9/16$ and for any $N = N_{int} + N_d$, we set $N_d = rN$ and present results for the PINNs in figure \ref{fig:p2}, where we plot the exact solution $u$ \eqref{eq:pex}, the PINN solution $u^{\ast}$ and the error $\hat{u} = u^{\ast} -u$, associated with a PINN generated with $N=20^2 = 400$ training points and with a network with $4$ hidden layers, with $24$ neurons in each layer, $\lambda = 10^{-3}$ in \eqref{eq:lfps} and $\lambda_{reg} = 0$ in \eqref{eq:lf2}. We observe from this figure, that already for this very low number of training points, the PINN generated by algorithm \ref{alg:PINN} is able to approximate the underlying solution of the inverse problem \eqref{eq:ps}, \eqref{eq:dtps}, very accurately. The plot of the error in Figure \ref{fig:p2} (right) shows that the error is mostly concentrated near the boundary and shows a logarithmic behavior, as predicted by the theory and seen in \cite{BO1}. 
    
                
\par The efficiency of PINNs in approximating solutions to this problem is reinforced from Table \ref{tab:p1}, where we present the (percentage relative) generalization errors $\er_G(D)$ \eqref{eq:gerps} and total training errors $\er_T$, for PINNs with $N  = 20^2,40^2,80^2,160^2$, and consequently $N_d = rN$, training points. We also tabulate the (relative percentage) error in $L^2$ between the PINN and the exact solution. From the table, we see that even for very few ($20^2$) training points, the $L^2$ and $H^1$-generalization errors are very low, with $H^1$-error being around $1\%$ and $L^2$-error around $0.3\%$.  This is particularly impressive as it takes approximately $21 s$ to train the PINN for this training set. The error decays slowly as the number of training samples is increased. This is consistent with the logarithmic decay predicted in \eqref{eq:pbd1}. Moreover, the training error is already of the size of $10^{-4}$, even for $20^2$ training points and it is difficult to reduce the training error further. One can possibly further reduce the error by using adaptive activation functions as suggested in \cite{jag3}. However, we did not observe any further reduction in the already low training error by using adaptive activation functions.

Note that we present the generalization errors, averaged over $K=30$ retrainings i.e, different random starting values for the optimizer. The generalization error, corresponding to the starting value with the smallest training loss, is considerably smaller.

Finally, as stated in remark \ref{rmk:nse}, we can also extend the bounds \eqref{eq:pbd}, \eqref{eq:pbd1}, to the case of noisy data, analogous to \eqref{eq:egenbn}. To test the validity of PINNs in this regime, we perturb the right hand side of the data term \eqref{eq:dtps}, with $1\%$ standard normal noise, run algorithm \ref{alg:PINN} to generate PINNs for approximating the inverse problem for the Poisson equation and present the results in Table \ref{tab:p2}. From this table, we see that although slightly higher than in the noise-free case (compare with Table \ref{tab:p1}), the $L^2$ and $H^1$-generalization errors are still very low and decay a bit faster than in the noise-free case. Thus, at least for noisy data with small noise amplitude, we can readily use PINNs for approximating the solutions of the data assimilation problem.  
\section{Heat Equation}
\label{sec:4}
As a model inverse problem for linear parabolic PDEs, we will consider the data assimilation problem for the heat equation.
\subsection{The underlying inverse problem}
With $D \subset \R^d$ being an open, bounded, simply connected set with smooth boundary $\bD$, we consider the heat equation,
\begin{equation}
    \label{eq:ht}
    u_t -\Delta u = f, \quad \forall x \in D, t\in (0,T),
\end{equation}
for some $T \in \R_+$, with $\Delta$ denoting the \emph{spatial} Laplace operator and $f \in L^2(D_T)$ with $D_T = D\times(0,T)$ being the source term. We also assume zero Dirichlet boundary conditions for simplicity.

We will assume that $u \in H^1(((0,T);H^{-1}(D)) \cap L^2((0,T);H^1(D))$ will solve the heat equation \eqref{eq:ht} in a weak sense.

The heat equation \eqref{eq:ht} would have been well-posed if the initial conditions i.e $u_0 = u(x,0)$ had been specified. In fact, the aim of the \emph{data assimilation} problem is to infer the initial conditions and the entire solution field at later times from some measurements of the solution in time. To model this, we consider the following \emph{observables}:
\begin{equation}
    \label{eq:dtht}
    u = g, \quad \forall (x,t) \in D^{\prime} \times (0,T),
\end{equation}
for some open, simply connected observation domain $D^{\prime} \subset D$ and for $g \in L^2(D^{\prime}_T)$ with $D^{\prime}_T = D^{\prime} \times (0,T)$.

Thus solving the data assimilation problem \eqref{eq:ht}, \eqref{eq:dtht}, amounts to finding the solution $u$ of the heat equation \eqref{eq:ht} in the whole space-time domain $D_T$, given data on the observation sub-domain $D_T^{\prime}$. The theory for this data assimilation inverse problem for the heat equation is classical and several well-posedness and stability results are available. Our subsequent error estimates for PINNs rely on the following classical result of Imanuvilov \cite{Im1}, based on the well-known Carleman estimates,
\begin{theorem}
\label{thm:h1}
\cite{Im1}: Let $u \in H^1((0,T);H^{-1}(D)) \cap L^2((0,T);H^1(D))$ solve the heat equation \eqref{eq:ht} with source term $f \in L^2(D_T)$, then for every $0 \leq \bar{T} < T$, there holds,
\begin{equation}
    \label{eq:htst}
    \|u\|_{C([\bar{T},T];L^2(D))} + \|u\|_{L^2((0,T);H^1(D))} \leq C\left(\|u\|_{L^2(D^{\prime}_T)} + \|f\|_{L^2(D_T)} + \|u\|_{L^2(\bD \times (0,T))}\right),
\end{equation}
for some constant $C > 0$.
\end{theorem}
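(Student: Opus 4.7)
The plan is to prove the bound through a global Carleman estimate tailored to the observation set, which is the standard route for unique continuation/data assimilation for parabolic equations and is exactly the method used in Imanuvilov's paper. First I would construct Fursikov--Imanuvilov weights: choose an auxiliary function $\psi \in C^2(\bar{D})$ with $\psi > 0$ in $D$, $\psi = 0$ on $\bD$, and $|\nabla \psi| \geq c > 0$ on $\bar{D} \setminus D^{\prime}$ (such a $\psi$ exists for any smooth subdomain $D^{\prime} \subset D$). Then, for parameters $s, \lambda > 0$ to be chosen large, set
\[
\phi(x,t) = \frac{e^{\lambda \psi(x)}}{t(T-t)}, \qquad \alpha(x,t) = \frac{e^{2\lambda \|\psi\|_{\infty}} - e^{\lambda \psi(x)}}{t(T-t)},
\]
so that $\alpha \to +\infty$ as $t \to 0^+, T^-$ and as $x \to \bD$; the factor $e^{-s\alpha}$ will suppress the contributions from the parabolic boundary and let us absorb initial/terminal traces.

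The central step is the Carleman inequality
\[
\int_{D_T}\!\! \Big( s^{-1}\phi^{-1}(|u_t|^2 + |\Delta u|^2) + s\lambda^2\phi|\nabla u|^2 + s^3\lambda^4\phi^3|u|^2 \Big) e^{-2s\alpha}\,dx\,dt \leq C\!\! \int_{D_T}\!\! |f|^2 e^{-2s\alpha} + C\!\! \int_{D^{\prime}_T}\!\! s^3\lambda^4\phi^3 |u|^2 e^{-2s\alpha} + C\cdot(\text{bdry}),
\]
valid for all $\lambda \geq \lambda_0$, $s \geq s_0$. To derive it, conjugate: set $v = e^{-s\alpha}u$ and decompose the conjugated operator $e^{-s\alpha}(\partial_t - \Delta)e^{s\alpha}$ into its formally self-adjoint and skew-adjoint parts $P_+ + P_-$. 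From $\|e^{-s\alpha} f\|^2 = \|P_+ v\|^2 + \|P_- v\|^2 + 2\langle P_+ v, P_- v\rangle$, repeated integration by parts on the cross term produces, after choosing $\lambda$ large to generate a positive quadratic form in $v, \nabla v$ with the required powers of $s, \lambda, \phi$, the desired left-hand side. The boundary contributions at $t = 0, T$ and on $\bD \times (0,T)$ either carry a good sign or are killed by the exponential decay of $e^{-s\alpha}$ at those boundaries (for the boundary trace term we keep the last term on the right, which eventually becomes $\|u\|_{L^2(\bD \times (0,T))}$). The critical geometric ingredient is $|\nabla \psi| \geq c$ outside $D^{\prime}$, which is exactly what lets one localize the global weighted $L^2$ term into the observation subdomain $D^{\prime}_T$.

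Once the Carleman estimate is in hand, the target bound follows in two steps. On $[\bar{T}, T - \delta] \times D$ the weight $e^{-2s\alpha}$ and $\phi$ are bounded above and below by positive constants depending on $\delta, \bar{T}$; so the left-hand side controls $\|u\|_{L^2((\bar{T}, T-\delta); H^1(D))}$. To reach $t = T$ and obtain $C([\bar{T},T]; L^2(D))$, combine this interior control with a standard energy identity for the heat equation propagated forward from a time slice in $(\bar{T}, T-\delta)$, which also recovers the $L^2((0,T); H^1(D))$ piece by additionally running the estimate backward from the same slice, using the already-obtained $L^2(D)$ control of $u$ there as ``initial'' data. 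Assembling these pieces yields the stated bound with the three data terms on the right.

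The main obstacle is unquestionably the Carleman estimate itself: verifying that the cross-term $\langle P_+ v, P_- v\rangle$ really produces a coercive quadratic form with the announced powers of $s$ and $\lambda$ requires a careful, lengthy bookkeeping of all integration-by-parts boundary contributions, and choosing the precise form of the Fursikov--Imanuvilov weight that makes the ``bad'' terms at $t = 0, T$ and on $\bD \times (0,T)$ either absorbable or of the right sign. A secondary subtlety is that one actually needs a quantitative statement valid for weak solutions $u \in H^1((0,T); H^{-1}(D)) \cap L^2((0,T); H^1(D))$, so a regularization argument (mollification or Galerkin approximation followed by passing to the limit, justified by the continuity of the Carleman weights away from the parabolic boundary) will be needed to apply the Carleman identity rigorously.
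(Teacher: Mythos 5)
The paper does not prove this statement: Theorem \ref{thm:h1} is imported verbatim from Imanuvilov's work \cite{Im1} and used as a black box, with the paper only remarking that it is ``based on the well-known Carleman estimates.'' Your outline therefore cannot be compared against an in-paper argument, but it does follow exactly the route the paper attributes the result to --- Fursikov--Imanuvilov weights, conjugation of $\partial_t-\Delta$, decomposition into self-adjoint and skew-adjoint parts, and localization of the weighted $L^2$ term into $D^{\prime}_T$ via the condition $|\nabla\psi|\geq c$ off the observation set. As a statement of strategy it is accurate and well-informed.

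That said, as a proof it has a genuine gap: the entire mathematical content of the theorem is the Carleman inequality, and you assert it rather than derive it --- you say as much yourself. Everything downstream of that inequality is routine; everything upstream of it is the theorem. Two further points deserve correction. First, with $\psi=0$ on $\bD$ the weight $\alpha=\bigl(e^{2\lambda\|\psi\|_\infty}-e^{\lambda\psi(x)}\bigr)/\bigl(t(T-t)\bigr)$ does \emph{not} tend to $+\infty$ as $x\to\bD$ for fixed interior $t$; it is merely maximal there. Consequently $e^{-s\alpha}$ does not ``kill'' the lateral boundary contributions, which is precisely why the trace term $\|u\|_{L^2(\bD\times(0,T))}$ must survive on the right-hand side of \eqref{eq:htst} --- your parenthetical acknowledgement of this is correct, but the claimed blow-up is not. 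Second, your passage from the Carleman estimate to the full $\|u\|_{L^2((0,T);H^1(D))}$ term is too quick: the weight $e^{-2s\alpha}$ degenerates as $t\to 0^+$, so the Carleman inequality gives no information near $t=0$, and the ``backward'' energy propagation you invoke from an interior time slice is not available for the heat equation (backward uniqueness does not come with a stability constant of this form). Recovering the $H^1$ norm down to $t=0$ requires a separate argument (or a slightly weaker statement restricted to $(\bar T,T)$); the paper itself concedes that its transcriptions of the cited stability theorems are simplified, so you should either supply that step or state the estimate on $(\bar T, T)$ only.
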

\begin{remark}
Relating the above formulation of the data assimilation inverse problem \eqref{eq:ht}, \eqref{eq:dtht} to the abstract formalism presented in section \ref{sec:2}, we readily see that $X =Y = L^2(D_T), Z = L^2(D^{\prime}_T)$, the differential operator is $\df = \partial_t-\Delta$, interpreted weakly, and the observable is $\map = ID$. Moreover, the abstract conditional stability estimate \eqref{eq:assm} takes the concrete form \eqref{eq:htst}. Thus, this problem falls squarely in the framework considered in section \ref{sec:2} and can be approximated by PINNs. 
\end{remark}
\subsection{PINNs}
We specify the algorithm \ref{alg:PINN} to generate a PINN for approximating the data assimilation inverse problem \eqref{eq:ht}, \eqref{eq:dtht}, in the following steps,
\subsubsection{Training sets}
The training set $\train_{d} = \{z_j \}$ for $z_j = (x,t)_j \in D^{\prime}_T$, with $1 \leq j \leq N_d$, are quadrature points, corresponding to the quadrature rule \eqref{eq:dqd}.
Given the fact that we also consider boundary conditions in \eqref{eq:ht}, we need to slightly modify the training set on which the PDE residual \eqref{eq:res1} is going to be collocated. We take \emph{interior} training set $\train_{int}$ as set of quadrature points $y_i = (x,t)_i \in D_T$, for $1 \leq i \leq N_{int}$, corresponding to the quadrature rule \eqref{eq:quad}. These can be quadrature points for a grid based (composite) Gauss quadrature rule or low-discrepancy sequences such as Sobol points. We also need to introduce \emph{spatial boundary} training set $\train_{sb} = \{\bar{y}_i\}$ for $1 \leq i \leq N_{sb}$, with $\bar{y}_i = (\bar{x},t)_i$, and $t_i \in (0,T)$, $\bar{x}_i \in \bD$, for each $i$. These points can be quadrature points corresponding to a \emph{boundary quadrature rule} i.e for any function $h:\bD \times (0,T) \mapsto \R$, we approximate 
\begin{equation}
    \label{eq:bqd1}
    \int\limits_0^T \int\limits_{\bD} h(x,t) d\sigma(x) dt \approx \sum\limits_{i=1}^{N_{sb}} w^{sb}_i h(\bar{x}_i,t_i),
\end{equation}
and we also assume that this boundary quadrature rule satisfies an error estimate of the form,
\begin{equation}
    \label{eq:bqd}
    \left|\int\limits_0^T \int\limits_{\bD} h(x,t) d\sigma(x) dt - \sum\limits_{i=1}^{N_{sb}} w^{sb}_i h(\bar{x}_i,t_i)\right| \leq C_{bd} N_{sb}^{-\alpha_{sb}},
\end{equation}
with constant $C_{bd} = C\left(\|h\|_{C^\ell(\bD \times (0,T))}\right)$.
\subsubsection{Residuals}
We will require that for parameters $\theta \in \Theta$, the neural networks $(x,t) \mapsto u_{\theta}(x,t) \in C^k(\overline{D_T})$, for $k \geq 2$. We define the following residuals that are needed in algorithm \ref{alg:PINN}. The PDE residual \eqref{eq:res1} is given by,
\begin{equation}
    \label{eq:resht}
    \res_{\theta} = \partial_t u_{\theta} -\Delta u_{\theta} - f, \quad \forall (x,t) \in D_T.
\end{equation}
We need the following residual to account for the boundary data in \eqref{eq:ht},
\begin{equation}
    \label{eq:reshtb}
    \res_{sb,\theta} = u_{\theta}|_{\bD \times (0,T)},
\end{equation}
and the data residual \eqref{eq:resd} is given by,
\begin{equation}
    \label{eq:resdht}
    \res_{d,\theta} = u_{\theta} - g, \quad \forall (x,t) \in D^{\prime}_T.
\end{equation}
\subsubsection{Loss functions}
In algorithm \ref{alg:PINN} for approximating the inverse problem \eqref{eq:ht}, \eqref{eq:dtht}, we will need the following loss function,
\begin{equation}
    \label{eq:lfht}
    J(\theta) = \sum\limits_{j=1}^{N_d} w^d_j|\res_{d,\theta}(z_j)|^2 + \sum\limits_{i=1}^{N_{sb}} w^{sb}_i|\res_{sb,\theta}(\bar{y}_i)|^2 + \lambda \sum\limits_{i=1}^{N_{int}} w_i|\res_{\theta}(y_i)|^2,
\end{equation}
with hyperparamter $\lambda$, residuals defined in \eqref{eq:resht}, \eqref{eq:reshtb}, \eqref{eq:resdht}, training points defined above and weights $w_i$, $w_{i}^{sb}$, $w^d_j$, corresponding to quadrature rules \eqref{eq:quad}, \eqref{eq:bqd1},and \eqref{eq:dqd}, respectively.
\subsection{Estimates on the generalization error}
For any $0 \leq \bar{T} < T$, we define the generalization error for the PINN $u^{\ast} = u_{\theta^{\ast}}$, generated by algorithm \ref{alg:PINN} to approximate the solution $u$ to the data assimilation, inverse problem \eqref{eq:ht}, \eqref{eq:dtht} as,
\begin{equation}
    \label{eq:gerht}
    \er_G(\bar{T}) = \|u - u^{\ast}\|_{C([\bar{T},T];L^2(D))} + \|u - u^{\ast}\|_{L^2((0,T);H^1(D))}.
\end{equation}
We estimate this generalization error in terms of the following training errors, 
\begin{equation}
    \label{eq:etrnht}
    \begin{aligned}
    \er_{d,T} &= \left(\sum\limits_{j=1}^{N_d} w^d_j|\res_{d,\theta^{\ast}}(z_j)|^2\right)^{\frac{1}{2}}, \quad
    \er_{int,T} = \left(\sum\limits_{i=1}^{N_{int}} w_i|\res_{\theta^{\ast}}(y_i)|^2\right)^{\frac{1}{2}}, \quad 
    \er_{sb,T} = \left(\sum\limits_{i=1}^{N_{sb}} w^{sb}_i|\res_{\theta^{\ast}}(\bar{y}_i)|^2\right)^{\frac{1}{2}}.
    \end{aligned}
\end{equation}
Note that the training errors $\er_{int,T},\er_{sb,T}$ and $\er_{d,T}$, can be readily computed from the loss functions \eqref{eq:lf2}, \eqref{eq:lfht}, a posteriori. We have the following estimate on the generalization error in terms of the training error,
\begin{lemma}
\label{lem:h1} 
For $f \in C^{k-2}(D_T)$ and $g \in C^k(D^{\prime}_T)$, with continuous extensions of the functions and derivatives upto the boundaries of the underlying sets and with $k \geq 2$, let $u \in H^1((0,T);H^{-1}(D)) \cap L^2((0,T);H^1(D))$ be the solution of the inverse problem corresponding to the heat equation \eqref{eq:ht} and satisfies the data \eqref{eq:dtht}. Let $u^{\ast} = u_{\theta^{\ast}} \in C^k(D_T)$ be a PINN generated by the algorithm \ref{alg:PINN}, with loss functions \eqref{eq:lf2}, \eqref{eq:lfht}. Then, the generalization error \eqref{eq:gerht} for any $0 \leq \bar{T} < T$ is bounded by, 
\begin{equation}
    \label{eq:hbd}
    \er_G(\bar{T}) \leq C \left(\er_{d,T}+ \er_{int,T}+\er_{sb,T} +  C_{q}^{\frac{1}{2}}N_{int}^{-\frac{\alpha}{2}} + C_{bd}^{\frac{1}{2}}N_{sb}^{-\frac{\alpha_{sb}}{2}}+ C_{qd}^{\frac{1}{2}}N_d^{-\frac{\alpha_d}{2}}\right),
\end{equation}
for some constant $C$ depending on $\bar{T},u,u^{\ast}$ and with constants $C_q= C_q\left(\|\res_{\theta^{\ast}}\|_{C^{k-2}(D_T)}\right)$, $C_{bd} = C_{bd}\left(\|\res_{sb,\theta^{\ast}}\|_{C^{k}(\bD\times(0,T))}\right)$ and $C_{qd} = C_{qd}\left(\|\res_{d,\theta^{\ast}}\|_{C^{k}(D^{\prime}_T)}\right)$, given by the quadrature error bounds \eqref{eq:qassm}, \eqref{eq:bqd} and \eqref{eq:dqassm}, respectively.
\end{lemma}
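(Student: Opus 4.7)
The plan is to mimic the argument used for the Poisson case in Lemma 3.1, but now leveraging the Carleman-based linear stability estimate from Theorem 4.1, and additionally tracking the boundary residual that appears because the heat equation carries Dirichlet data.

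For notational ease, write $\res = \res_{\theta^{\ast}}$, $\res_{d}=\res_{d,\theta^{\ast}}$, $\res_{sb}=\res_{sb,\theta^{\ast}}$, and set $\hat{u} = u^{\ast} - u$. By linearity of the heat operator $\partial_t - \Delta$ and of the restriction $\map = \mathrm{Id}$, together with the definitions \eqref{eq:resht}, \eqref{eq:reshtb}, \eqref{eq:resdht}, one checks that $\hat{u}$ solves (in the weak sense inherited from $u$, since $u^{\ast}\in C^k$)
\begin{equation*}
\partial_t \hat{u} - \Delta \hat{u} = \res \text{ in } D_T, \qquad \hat{u} = \res_{sb} \text{ on } \bD\times(0,T), \qquad \hat{u} = \res_d \text{ in } D^{\prime}_T.
\end{equation*}
Since $u^{\ast}\in C^k(\overline{D_T})$ with $k\geq 2$ and $u$ lies in the assumed parabolic energy class, $\hat{u}$ itself lies in $H^1((0,T);H^{-1}(D))\cap L^2((0,T);H^1(D))$, so Theorem \ref{thm:h1} applies to $\hat{u}$ with source $\res$. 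This yields, for any $0\leq\bar{T}<T$,
\begin{equation*}
\|\hat{u}\|_{C([\bar{T},T];L^2(D))} + \|\hat{u}\|_{L^2((0,T);H^1(D))} \leq C\bigl(\|\res_d\|_{L^2(D^{\prime}_T)} + \|\res\|_{L^2(D_T)} + \|\res_{sb}\|_{L^2(\bD\times(0,T))}\bigr).
\end{equation*}

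Next I convert the three $L^2$ norms on the right into training errors plus quadrature residuals. Using that $\er_{int,T}^2$, $\er_{sb,T}^2$, $\er_{d,T}^2$ are exactly the quadrature approximations of $\|\res\|^2_{L^2(D_T)}$, $\|\res_{sb}\|^2_{L^2(\bD\times(0,T))}$, $\|\res_d\|^2_{L^2(D^{\prime}_T)}$ relative to the rules \eqref{eq:quad}, \eqref{eq:bqd1}, \eqref{eq:dqd}, the error bounds \eqref{eq:qassm}, \eqref{eq:bqd}, \eqref{eq:dqassm} give
\begin{equation*}
\|\res\|_{L^2(D_T)} \leq \er_{int,T} + C_q^{1/2} N_{int}^{-\alpha/2}, \quad \|\res_{sb}\|_{L^2(\bD\times(0,T))} \leq \er_{sb,T} + C_{bd}^{1/2} N_{sb}^{-\alpha_{sb}/2},
\end{equation*}
\begin{equation*}
\|\res_d\|_{L^2(D^{\prime}_T)} \leq \er_{d,T} + C_{qd}^{1/2} N_d^{-\alpha_d/2},
\end{equation*}
with the constants taking the form indicated in the statement. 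The assumed regularity $\res\in C^{k-2}$, $\res_{sb},\res_d\in C^k$, which follows from $u^{\ast}\in C^k$, $f\in C^{k-2}$, $g\in C^k$, is exactly what is needed so that these quadrature error constants are finite.

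Substituting the three displayed bounds into the stability estimate for $\hat{u}$ and recalling the definition \eqref{eq:gerht} of $\er_G(\bar{T})$ delivers \eqref{eq:hbd}, with the constant $C$ absorbing the constant in Theorem \ref{thm:h1} (which depends only on $D,D^{\prime},\bar{T},T$, not on $\hat{u}$, since Imanuvilov's estimate is \emph{linear} rather than of three-balls Hölder type). The main substantive obstacle compared with the Poisson case is the interpretation of the Carleman estimate for the difference $\hat{u}$, because that estimate is stated for solutions of the heat equation with no assumption of zero boundary data, and must be applied with $\res_{sb}$ playing the role of boundary trace; the linearity of the estimate is what makes the argument go through cleanly and avoids any Hölder exponent $\tau<1$, so, unlike in Lemma \ref{lem:p1}, no power $\tau$ appears in \eqref{eq:hbd}.
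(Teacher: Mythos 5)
Your proposal is correct and follows essentially the same route as the paper's proof: define $\hat{u}=u^{\ast}-u$, use linearity to identify the residuals as the sources, boundary trace and data for $\hat{u}$, apply the stability estimate of Theorem \ref{thm:h1}, and then replace each $L^2$ residual norm by its training-error quadrature approximation plus the corresponding quadrature error term. Your added observation that the Carleman-based estimate is linear (so no H\"older exponent $\tau$ appears, in contrast to Lemma \ref{lem:p1}) is accurate and consistent with the statement of \eqref{eq:hbd}.
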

\begin{proof}
For notational simplicity, we denote $\res = \res_{\theta^{\ast}}, \res_{sb} = \res_{sb,\theta^{\ast}}$ and $\res_d = \res_{d,\theta^{\ast}}$. 

Define $\hat{u} = u^{\ast} - u  \in H^1((0,T);H^{-1}(D)) \cap L^2((0,T);H^1(D))$, by linearity of the differential operator in \eqref{eq:ht} and the data observable in \eqref{eq:dtht} and by definitions \eqref{eq:resht}, \eqref{eq:reshtb} and \eqref{eq:resdht}, we see that $\hat{u}$ satisfies,
\begin{equation}
    \label{eq:hht}
    \begin{aligned}
   \hat{u}_t -\Delta \hat{u} &= \res, \quad \forall x,t \in D_T, \\
   \hat{u} &= \res_{sb}, \quad \forall x \in \bD, t \in (0,T), \\
    \hat{u} &= \res_d, \quad \forall x,t \in D^{\prime}_T.
    \end{aligned}
\end{equation}
Hence, we can directly apply the conditional stability estimate \eqref{eq:htst} to obtain,
\begin{equation}
    \label{eq:hl1}
    \begin{aligned}
    \|\hat{u}\|_{C([\bar{T},T];L^2(D))} &\leq C \left(\|\res_d\|_{L^2(D^{\prime})_T} + \|\res\|_{L^2(D_T)} + \|\res_{sb}\|_{L^2(\bD \times (0,T))} \right), \\
    \end{aligned}
\end{equation}
with the constant $C$ from \eqref{eq:htst}. 

Recognizing that the training errors $\er^2_{d,T},\er_{sb,T}^2$ and $\er_{int,T}^2$ are the quadrature approximations for $\|\res_d\|^2_{L^2(D^{\prime})_T}$, $\|\res_{sb}\|_{L^2(\bD \times (0,T))} $ and $\|\res\|^2_{L^2(D_T)}$ with respect to the quadrature rules \eqref{eq:dqd}, \eqref{eq:bqd1} and \eqref{eq:quad}, respectively and using bounds \eqref{eq:dqassm},\eqref{eq:bqd} and \eqref{eq:qassm} yields the inequality \eqref{eq:pl2}. Substituting it into \eqref{eq:hl1} leads to the desired bound \eqref{eq:hbd} in a straightforward manner. 

\end{proof}

\begin{figure}[h!]
        \centering
        \includegraphics[width=8cm]{{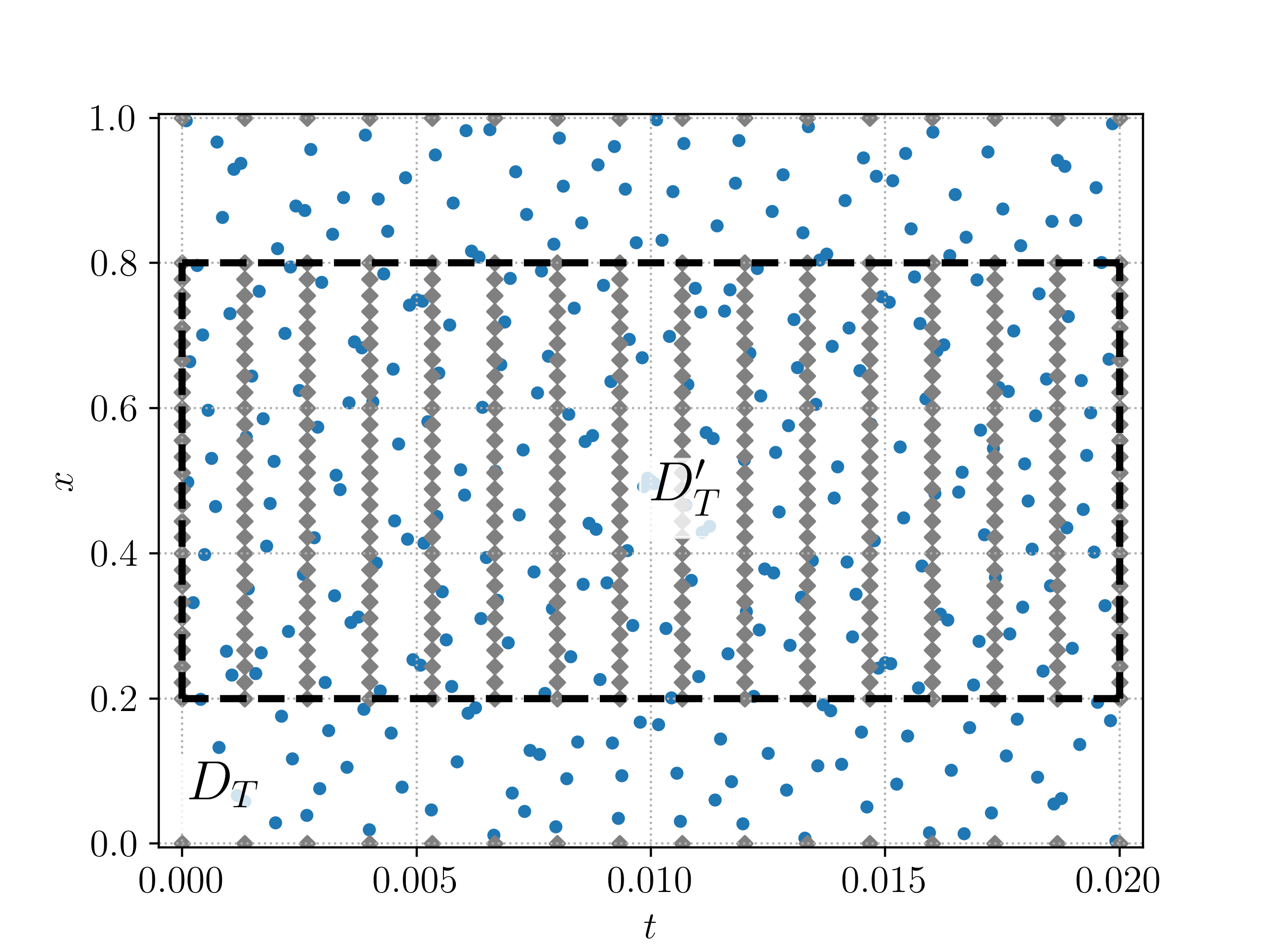}}
    \caption{The domains $D_T,D_T^{\prime}$ for the heat equation numerical experiment. Training set $\train_{int}$ are Sobol points (blue dots) and training set $\train_d  \cup \train_{sb}$ are Cartesian grid points (grey squares).}
\label{fig:h1}
\end{figure}
\begin{figure}[h!]

    \begin{subfigure}{.33\textwidth}
        \centering
        \includegraphics[width=1\linewidth]{{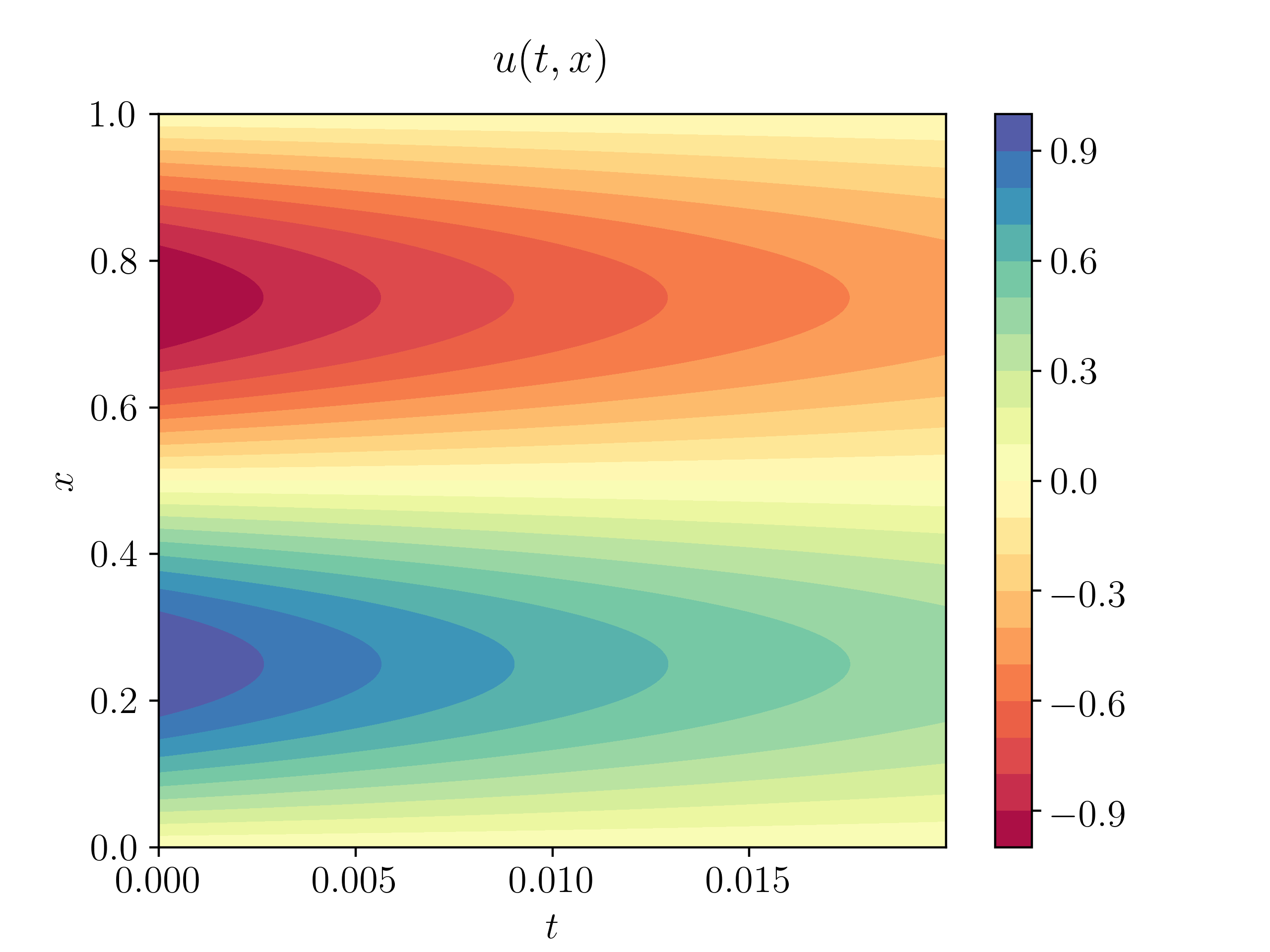}}
        \caption{Exact Solution $u$}
    \end{subfigure}
    \begin{subfigure}{.33\textwidth}
        \centering
        \includegraphics[width=1\linewidth]{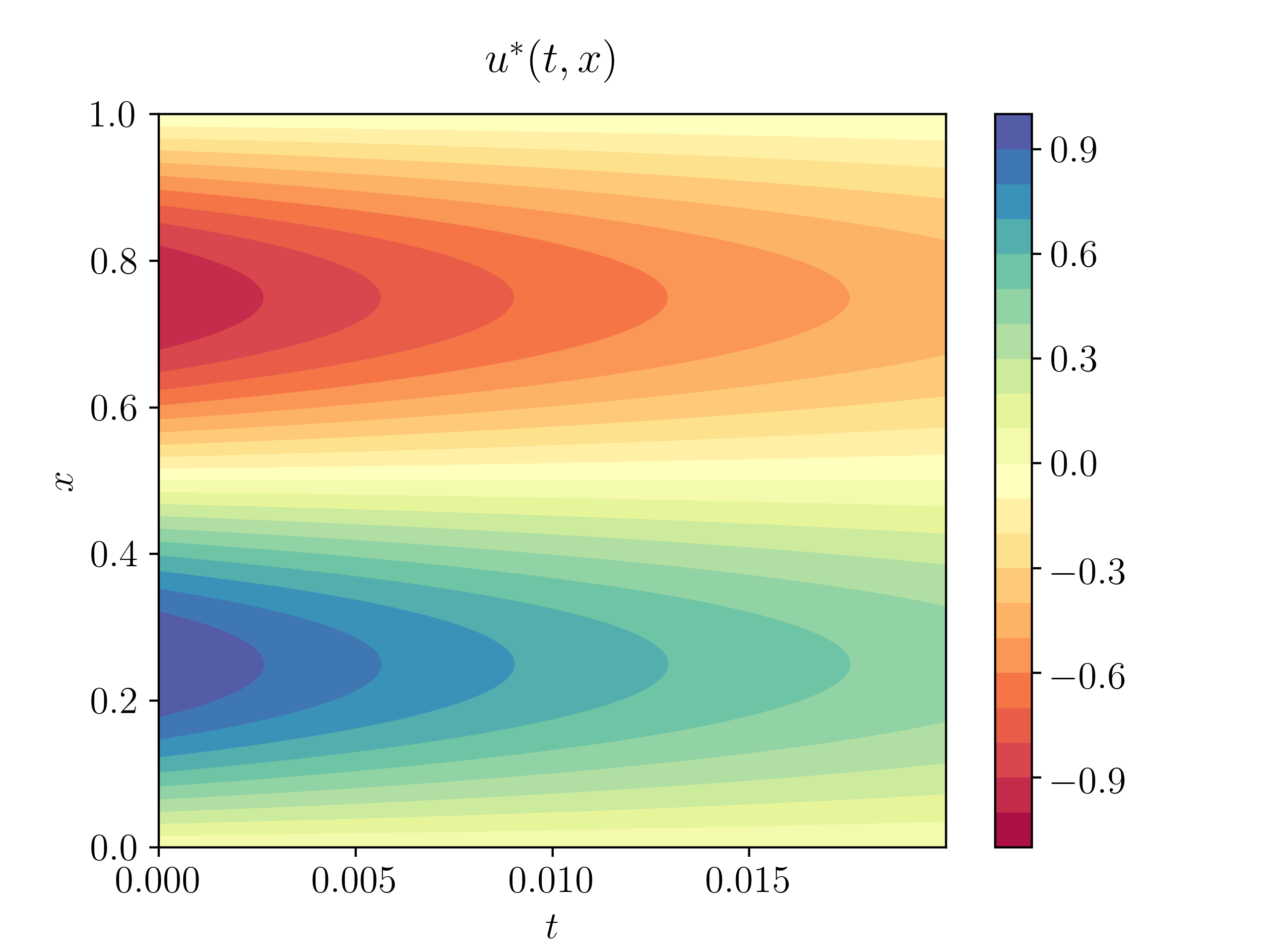}
        \caption{PINN $u^{\ast}$ }
    \end{subfigure}
     \begin{subfigure}{.33\textwidth}
        \centering
        \includegraphics[width=1\linewidth]{{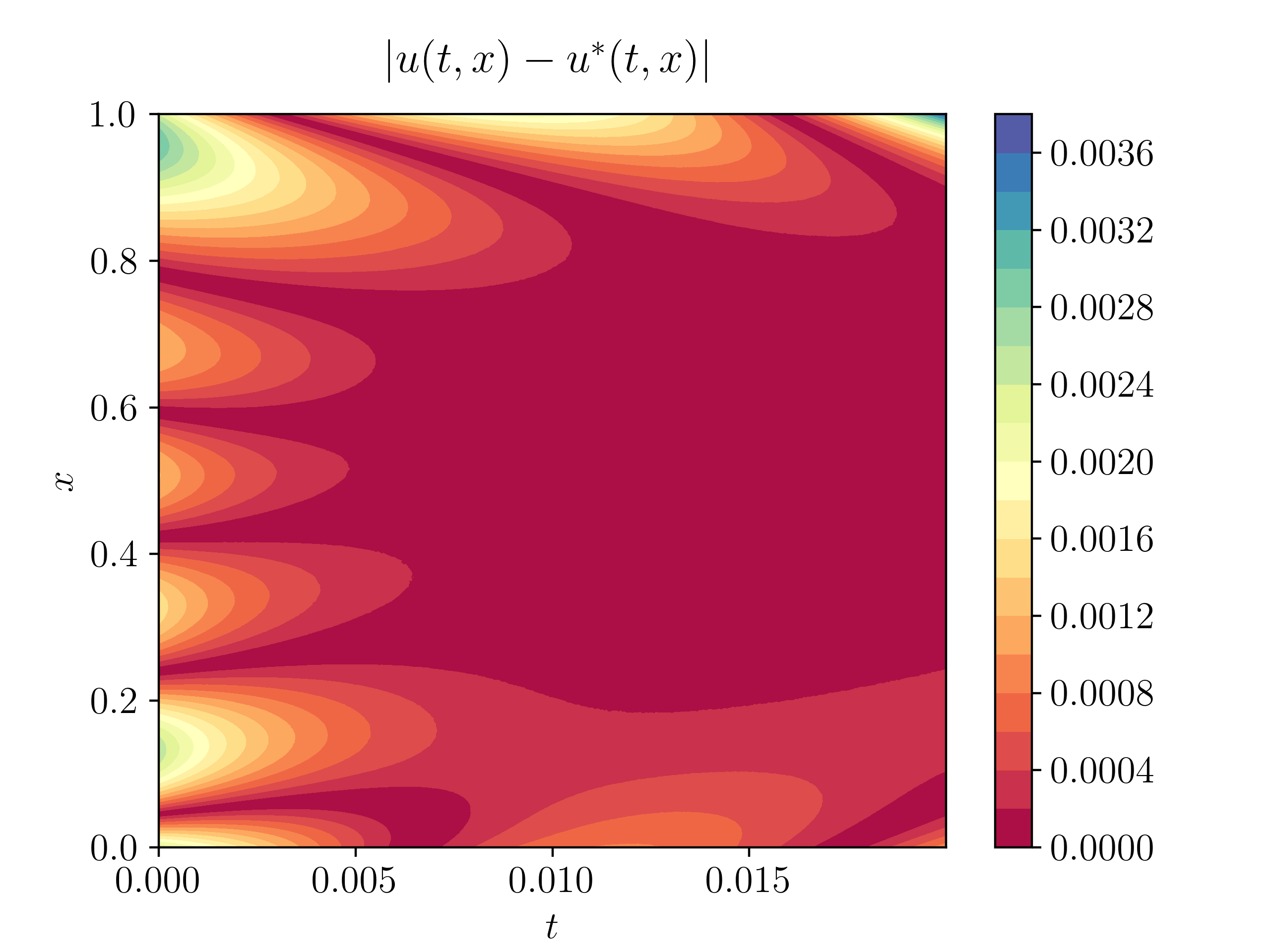}}
        \caption{Relative error $\frac{|\hat{u}|}{\|u\|_{L^2}}$}
    \end{subfigure}
  \caption{Comparison of the Exact solution, the PINN approximation and  relative error for the data assimilation problem for the heat equation, with $N = 16 \times 50$ training points.}
\label{fig:h2}
\end{figure}

\subsection{Numerical experiments}
\subsubsection{Heat equation in one space dimension}
We follow \cite{BO2} and start by considering the heat equation \eqref{eq:ht} in one space dimension (with zero Dirichlet boundary conditions) and setting $D = (0,1)$. The final time is set as $T=0.02$ and the data term in \eqref{eq:dtht} is specified in the observation domain $D^{\prime}_T$, with $D^{\prime} = (a,1-a)$, and as in \cite{BO2}, we set $a = 0.2$. The data term $g$ and the source term $f$ are generated from the exact solution,
\begin{equation}
    \label{eq:hex1}
    u(x,t) = e^{-4\pi^2t^2}\sin(2\pi x).
\end{equation}

In the first numerical experiment, we use Sobol points as training points $\train_{int}$ in the domain $D_T$ and
Cartesian grid points are chosen as the training set $\train_d  \subset D_T^{\prime}$ and $\train_{sb} \subset \bD \times (0,T)$ (see figure \ref{fig:h1} for an illustration).
We run the algorithm \ref{alg:PINN} on these training sets to generate the PINN for approximating the data assimilation problem for the heat equation \eqref{eq:ht}, \eqref{eq:dtht}. 

In figure \ref{fig:h2}, we plot the exact solution and the solution field, generated by a PINN, and the relative error (in norm), with a total of $N= N_{int} +N_{sb} +N_d= 16 \times 50$ training points, with $N_{int} = (1-r)N$, and $r = 0.6$ training points and with hyperparameters, shown in Table \ref{tab:h1}. We see from this figure that the PINN is able to approximate the exact solution (in both space and time) to very high accuracy. From the plot of the error (figure \ref{fig:h2} (right)), we see that bulk of the error is concentrated near the initial time i.e $T\approx 0$. This is not surprising as the solution of the heat equation is damped very quickly in time and initial errors are dissipated. 

To further quantity this high accuracy of PINNs, we present the percentage relative errors, $\|u - u^{\ast}\|_{L^2(D_T)}$ and $\|u - u^{\ast}\|_{L^2((0,T);H^1(D))}$ , for a sequence of PINNs, with increasing number of training points. Note that this quantity is a slight perturbation of the generalization error in \eqref{eq:hbd} and can be readily bounded above by the lhs of \eqref{eq:hbd}. For each configuration the model is retrained 20 times and the configuration realizing the lowest value of the average training loss over the retrainings is selected. We see from this table that the errors in both $L^2$ and $H^1$ are very small (significantly less than $1\%$), even for very few $16 \times 50 = 800$ training points. However, there is some saturation effect and the errors do not really decrease on increasing the number of training samples. This can be due to the fact that training error $\er_T$ is already very small, even for the smallest number of training points, and it is difficult to reduce them further with the LBFGS optimizer for the loss function \eqref{eq:lfht}. 
\begin{table}[htbp] 
    \centering
    \renewcommand{\arraystretch}{1.1} 
    
    \footnotesize{
        \begin{tabular}{c  c c c c  c c  c  c} 
            \toprule
             $N$  &\bfseries $K-1$ & \bfseries $\tilde{d}$  &$\lambda_{reg}$ &\bfseries $\lambda$&  $\er_T$ &  $||u - u^\ast||_{L^2}$   & $||u - u^\ast||_{H^1}$  \\ 
            \midrule
            \midrule
            $16\times50$           &  8 &20&0.0& 0.001   &0.001  & 0.18  \% &0.42 \%\\
                \midrule 
            $16\times100    $          &8 &20&0.0& 0.001   & 0.00096& 0.25 \% &0.52 \% \\
            \midrule 
            $16\times200$     &8 &20&0.0& 0.001    &0.00078 & 0.23 \%&0.55 \%\\
            \bottomrule
        \end{tabular}
        \caption{$1$-D Heat equation: relative percentages errors for different values of the number of training samples.}
        \label{tab:h1}
    }
\end{table}
\par Finally, in order to investigate the sensitivity of the PINN errors with respect to the type of training points (underlying quadrature rule), we repeat the numerical experiment for data assimilation with the heat equation (with exact solution \eqref{eq:hex1}), but with randomly chosen training points in the training sets $\train_{int,sb,d}$. All points are chosen with respect to underlying uniform distributions and the corresponding PINNs are generated by running algorithm \ref{alg:PINN}. Note that we can readily prove a version of the generalization error estimate \eqref{eq:hbd} in this setting by adapting the arguments in \cite{MM1} (see Lemma 2.10) and also the recent paper \cite{LMM1}. The resulting (averaged over $K=30$ different randomly chosen training sets) generalization errors in $L^2$ and $H^1$ are shown in Table \ref{tab:h2}. From this table, we observe that the generalization errors are as small as the ones in the case of Sobol and Cartesian training points. Moreover, there is a slight but consistent decay in the error with increasing number of training points. These results indicate considerable robustness of the PINNs, with respect to the choice of training points. 

\begin{table}[htbp] 
    \centering
    \renewcommand{\arraystretch}{1.1} 
    
    \footnotesize{
        \begin{tabular}{c  c c c c  c c c c  c} 
            \toprule
             $N$  &\bfseries $K-1$ & \bfseries $\tilde{d}$  &$\lambda_{reg}$ &\bfseries $\lambda$&  $\er_T$ &  $||u - u^\ast||_{L^2}$   & $||u - u^\ast||_{H^1}$  \\ 
            \midrule
            \midrule
            $16\times50$           &  4 &24&0.0& 0.001   &0.001  & 0.27 \% &0.55 \%\\
                \midrule 
            $16\times100    $          &4 &24&0.0& 0.001   & 0.00098& 0.25 \% &0.52 \% \\
            \midrule 
            $16\times200$     &4 &24&0.0& 0.000788    &0.00076 & 0.22 \%&0.47 \%\\
            \bottomrule
        \end{tabular}
        \caption{$1$-D Heat equation with randomly chosen training points: relative percentages errors for different values of the number of training samples. }
        \label{tab:h2}
    }
\end{table}

\subsubsection{Heat equation in several space dimensions}
For this experiment, we consider the linear heat equation \eqref{eq:ht} in the domain $D_T = [0,1] \times[0,1]^n$  for different space dimensions $n$. We follow the example in \cite{MM1} and consider the heat equation with initial data $\bar{u}(x) = \frac{\|x\|^2}{n}$ and the exact solution given by 
\begin{equation}
    \label{eq:hex2}
    u(x,t)=\frac{\|x\|^2}{n} + 2t. 
\end{equation}
The observation domain is set as $D_T^\prime = [a,1-a] \times[a,1-a]^n$ with $a=0.4$ and the data term \eqref{eq:dtht} is defined by restricting the exact solution \eqref{eq:hex2} to this observation doman. 

All the training sets $\train_{int},\train_{sb}$ and $\train_d$ consists of points, chosen randomly and independently with the underlying uniform distribution. For all dimensions $n$ considered here, we let $N_{int}=8192$, $N_{d}=6144$ and $N_{sb}=2048$, resulting in a total of $N = N_{int} + N_{sb} + N_d = 16384$ training points.

On these training sets, the algorithm \ref{alg:PINN} is run with loss function \eqref{eq:lfht} and the best performing hyperparameters are identified after ensemble training and presented in Table \ref{tab:h_n}. The resulting $L^2$-error $\|u-u^{\ast}\|_{L^2(D_T)}$ (in relative percentages and computed on a test set of $10^5$ randomly chosen samples) is shown in Table \ref{tab:h_n}. We observe from this table that the PINN error is very small, less than $0.1\%$ till $n=10$ space dimensions. From then on, the error increases apparently linearly, with still very low errors of $2\%$ for $n=100$ space dimensions. These results are \emph{striking} on account of the following factors,
\begin{itemize}
    \item The linear increase of error with respect to dimension appears to overcome the well-known \emph{curse of dimensionality}, where one would expect an exponential increase of the error with respect to dimension. 
    \item The relative size of the observation domain $D^{\prime}_T$ with respect to the whole domain $D_T$, shrinks exponentially with dimension. Yet, the PINNs algorithm is able to reconstruct the entire solution field with high accuracy, from observations in this very small domain. 
    \item This experiment should be compared to experiment $3.4.3$ in \cite{MM1}, where the forward problem for the heat equation in very high dimensions was approximated with PINNs. Compared to the results presented there (see Table $6$ in \cite{MM1}), we observe that the PINNs are able to approximate the very high-dimensional inverse problem for the heat equation with \emph{greater accuracy for even smaller number of training samples}, than for the forward problem. This surprising observation merits further investigation and highlights the potential of PINNs in solving inverse problems.

\end{itemize}

\begin{table}[htbp] 
    \centering
    \renewcommand{\arraystretch}{1.1} 
    
    \footnotesize{
        \begin{tabular}{ c  c c c c c c c c} 
            \toprule
            \bfseries $n$  &\bfseries $N$   &\bfseries $K-1$ & \bfseries $\tilde{d}$  &$\lambda_{reg}$&\bfseries $\lambda$ &\bfseries $\er_T$ &\bfseries $||u - u^\ast||_{L^2}$   \\ 
            \midrule
            \midrule
            1           & 16384& 4&20&0.0 & 0.01 &5.1$\cdot10^{-5}$& 0.012\% \\
            \midrule 
              5            & 16384& 4&20& 0.0& 0.01 &0.0003& 0.044\% \\
                \midrule 
             10            & 16384& 4&20& 0.0& 0.001 &0.0003& 0.07\% \\
                \midrule 
             20            & 16384& 4&20& $10^{-6}$& 0.001 &0.0035& 0.62\% \\
               \midrule 
             50            & 16384& 4&20& $10^{-6}$& 0.001 &0.0038& 1.68\% \\
               \midrule 
              100            & 16384& 4&20& $10^{-6}$& 0.001 &0.003& 2.0\% \\

            \bottomrule
        \end{tabular}
    \caption{Data assimilation problem for the Multi ($n$-D) dimensional Heat equation with randomly chosen training points: relative percentages errors for different number of dimensions. }
    \label{tab:h_n}
    }
\end{table}

\section{The Wave equation}
\label{sec:5}
Next, we will consider the wave equation as a model problem for linear hyperbolic PDEs. 
\subsection{The underlying inverse problem}
With $D \subset \R^d$ being an open, bounded, simply connected set with smooth boundary $\bD$, we consider the wave equation with zero Dirichlet boundary conditions,
\begin{equation}
    \label{eq:wv}
    \begin{aligned}
    u_{tt} -\Delta u &= f, \quad \forall (x,t) \in D\times (0,T), \\
    u &\equiv 0, \quad \forall (x,t )\in \bD\times(0,T),
    \end{aligned}
\end{equation}
for some $T \in \R_+$, with $\Delta$ denoting the \emph{spatial} Laplace operator and $f \in L^2(D_T)$ with $D_T = D\times(0,T))$ being the source term.

The forward problem for the wave equation is only well-posed when we know the initial conditions,
\begin{equation}
    \label{eq:wvin} 
    \begin{aligned}
    u(x,0) &= u_0(x), \quad \forall x \in D, \\
    u_t(x,0) &= u_1(x), \quad \forall x \in D,
    \end{aligned}
\end{equation}
for some $u_0 \in L^2(D)$ and $u_1 \in H^{-1}(D)$. However, in many problems of interest, the initial data $u_{0,1}$ are not known and have to be inferred from measurements of the form,
\begin{equation}
\label{eq:dtwv}
u(x,t) = g, \quad (x,t) \in D^{\prime}\times (0,T),
\end{equation}
for some subset $D^{\prime} \subset \bar{D}$. We also denote the observation domain as  $D^{\prime}_T = D^{\prime} \times (0,T)$ .

The resulting \emph{data assimilation} inverse problem consists of finding the initial data $u_0,u_1$ and consequently the entire solution field $u$ of the wave equation \eqref{eq:wv}, given data $f,g$. A slight variant of this problem stems from photoacoustic tomography (PAT) \cite{PAT} and it also arises in control theory, in the form of so-called Luenberger observers \cite{LO}.

The data assimilation problem for the wave equation \eqref{eq:wv}, \eqref{eq:dtwv} has received considerable amount of attention in the mathematical literature and can be solved as long as the following \emph{geometric control condition}, introduced in the seminal paper \cite{BLR}, is satified,
\begin{definition}
\emph{Geometric Control Condition}(GCC) (see \cite{BLR,TRE}): The domain $D^{\prime}_T \subset \bar{D}_T$, is said to satisfy the geometric control condition (gcc) in $D_T$ if every compressed generalized bicharacteristic $(x(s),t(s),\tau(s),\xi(s)$ intersects the set $D^{\prime}_T$ for some $s \in \R$ 
\end{definition}
We refer the reader to \cite{BLR,TRE} for the rather technical definition of generalized bi-characteristics. It roughly states that all light rays in $D_T$ must intersect $D^{\prime}_T$, taking reflections at the boundary into account. An even stronger, sufficient condition that implies the GCC is the so-called $\Gamma$-condition that \cite{GAMM} roughly requires that the final time $T$ and the set $\partial D^{\prime} \cap \bD$ are relatively large. 

Under the GCC, one has follow \cite{BO3} and prove the following well-posedness result for the data assimilation problem for the wave equation \eqref{eq:wv},\eqref{eq:dtwv},
\begin{theorem}
\label{thm:w1}
[\cite{BO3},Theorem 2.2]: Let $D_T = D \times (0,T)$, such that $D \subset \R^d$ is a domain with smooth boundary $\bD$. Let $D^{\prime}_T = D^{\prime} \times (0,T)$, with $D^{\prime} \subset \bar{D}$, satisfy the geometric control condition. If $u \in L^2(D_T)$ be such that $u(\cdot,0) \in L^2(D)$, $\partial_t u(\cdot,0) \in H^{-1}(D)$, $u|_{\bD \times (0,T)} \in L^2(\bD \times (0,T))$, then the following estimate holds,
\begin{equation}
    \label{eq:wvst}
    \sup\limits_{t \in [0,T]} \left(\|u(\cdot,t)\|_{L^2(D)} + \|\partial_t u(\cdot,t)\|_{H^{-1}(D)} \right) \leq \kappa \left(\|u\|_{L^2(D^{\prime}_T)} + \|u_{tt}-\Delta u\|_{L^2(D_T)} + \|u\|_{L^2(\bD \times (0,T))}\right),
\end{equation}
\end{theorem}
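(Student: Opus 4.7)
The plan is to reduce the estimate \eqref{eq:wvst} to the classical Bardos--Lebeau--Rauch observability inequality for the homogeneous wave equation via a splitting that absorbs the source $f := u_{tt}-\Delta u$ and the boundary trace $h := u|_{\bD \times (0,T)}$. First I would introduce an auxiliary lifting $v$ as the transposition (very weak) solution of
\[
v_{tt} - \Delta v = f \text{ in } D_T, \quad v = h \text{ on } \bD \times (0,T), \quad v(\cdot,0) = 0, \quad v_t(\cdot,0) = 0,
\]
and invoke the Lions hidden-regularity estimate
\[
\sup_{t \in [0,T]} \left(\|v(\cdot,t)\|_{L^2(D)} + \|v_t(\cdot,t)\|_{H^{-1}(D)}\right) \leq C \left(\|f\|_{L^2(D_T)} + \|h\|_{L^2(\bD \times (0,T))}\right).
\]
This absorbs the two inhomogeneous data terms into a single object whose regularity matches that of $u$ itself.

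Next, set $w := u - v$. By construction $w$ satisfies the homogeneous wave equation with homogeneous Dirichlet boundary condition in $D_T$, with initial data $(w_0,w_1) := (u(\cdot,0), u_t(\cdot,0)) \in L^2(D) \times H^{-1}(D)$. Because $D^{\prime}_T$ satisfies the GCC, the Bardos--Lebeau--Rauch observability inequality (valid precisely at the $L^2 \times H^{-1}$ energy level by a standard transposition argument) gives
\[
\|w_0\|_{L^2(D)} + \|w_1\|_{H^{-1}(D)} \leq C \|w\|_{L^2(D^{\prime}_T)}.
\]
Combined with $L^2 \times H^{-1}$ energy conservation for the homogeneous Dirichlet wave problem, one obtains the time-uniform bound $\sup_{t}\bigl(\|w(\cdot,t)\|_{L^2(D)} + \|w_t(\cdot,t)\|_{H^{-1}(D)}\bigr) \leq C \|w\|_{L^2(D^{\prime}_T)}$.

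Finally, using $w = u - v$ on $D^{\prime}_T$ gives $\|w\|_{L^2(D^{\prime}_T)} \leq \|u\|_{L^2(D^{\prime}_T)} + \|v\|_{L^2(D^{\prime}_T)}$, and the $v$-estimate bounds the second term by $\|f\|_{L^2(D_T)} + \|h\|_{L^2(\bD \times (0,T))}$. Writing $u = v + w$ and adding the two sup-in-time bounds with the triangle inequality yields \eqref{eq:wvst}. The main obstacle is the transposition/hidden-regularity step: it rests on the reverse trace estimate $\|\partial_\nu \phi\|_{L^2(\bD \times (0,T))} \leq C \|(\phi_0,\phi_1)\|_{H^1_0(D) \times L^2(D)}$ for solutions of the dual wave problem, whose proof requires multiplier identities of Rellich type and careful handling of the low regularity $(u_0,u_1) \in L^2(D) \times H^{-1}(D)$; coupling this with the BLR theorem under the GCC is the technically delicate part of the argument, while the subsequent energy bookkeeping is essentially routine.
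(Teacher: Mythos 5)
The paper does not actually prove this theorem: it quotes it verbatim from the cited reference (Theorem 2.2 there) and only remarks that the proof rests on the observability estimates of Bardos--Lebeau--Rauch, with Carleman estimates as an alternative. Your sketch --- lifting the source $u_{tt}-\Delta u$ and the boundary trace by a transposition solution controlled through Lions' hidden-regularity estimate, then applying the $L^2\times H^{-1}$-level BLR observability inequality and energy conservation to the homogeneous remainder, and recombining by the triangle inequality --- is precisely the standard argument behind that cited result, so it is consistent with (and in fact fills in) the route the paper merely points to.
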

with observability constant $\kappa$ that depends on the underlying domain geometry and final time $T$. The proof of this theorem relies heavily on the so-called observability estimates of \cite{BLR}, which are derived by using micro-local propagation of singularities for the wave equation. Alternative proofs use Carleman estimates \cite{CWV}. 

Again, we can recast the data assimilation inverse problem for the wave equation in the abstract formulation of section \ref{sec:2}, with \eqref{eq:wvst} playing the role of the conditional stability estimate \eqref{eq:assm}. Thus, this inverse problem is amenable to efficient approximation by PINNs. 
\subsection{PINNs}
We specify the algorithm \ref{alg:PINN} to generate a PINN for approximating the data assimilation inverse problem \eqref{eq:wv}, \eqref{eq:dtwv}, in the following steps,
\subsubsection{Training sets}
We consider exactly the same training sets as for the heat equationn i.e, $\train_{d} = \{z_j \}$ for $z_j = (x,t)_j \in D^{\prime}_T$, with $1 \leq j \leq N_d$, are quadrature points, corresponding to the quadrature rule \eqref{eq:dqd}, $\train_{int}$ as set of quadrature points $y_i = (x,t)_i \in D_T$, for $1 \leq i \leq N_{int}$, corresponding to the quadrature rule \eqref{eq:quad} and  \emph{spatial boundary} training set $\train_{sb} = \{\bar{y}_i\}$ for $1 \leq i \leq N_{sb}$, with $\bar{y}_i = (\bar{x},t)_i$, and $t_i \in (0,T)$, $\bar{x}_i \in \bD$, for each $i$. These points can be quadrature points corresponding to the \emph{boundary quadrature rule} \eqref{eq:bqd1}. 
\subsubsection{Residuals}
We will require that for parameters $\theta \in \Theta$, the neural networks $(x,t) \mapsto u_{\theta}(x,t) \in C^k(\overline{D_T})$, for $k \geq 2$. We define the following residuals that are needed in algorithm \ref{alg:PINN}. The PDE residual \eqref{eq:res1} is given by,
\begin{equation}
    \label{eq:reswv}
    \res_{\theta} = \partial_{tt} u_{\theta} -\Delta u_{\theta} - f, \quad \forall (x,t) \in D_T.
\end{equation}
We need the following residual to account for the boundary data in \eqref{eq:wv},
\begin{equation}
    \label{eq:reswvb}
    \res_{sb,\theta} = u_{\theta}|_{\bD \times (0,T)}.
\end{equation}
The data residual \eqref{eq:resd} is given by,
\begin{equation}
    \label{eq:resdwv}
    \res_{d,\theta} = u_{\theta} - g, \quad \forall (x,t) \in D^{\prime}_T.
\end{equation}
\subsubsection{Loss functions}
In algorithm \ref{alg:PINN} for approximating the inverse problem \eqref{eq:wv}, \eqref{eq:dtwv}, we will need the following loss function (which is identically the same as the loss function \eqref{eq:lfht} for the heat equation),
\begin{equation}
    \label{eq:lfwv}
    J(\theta) = \sum\limits_{j=1}^{N_d} w^d_j|\res_{d,\theta}(z_j)|^2 +\sum\limits_{i=1}^{N_{sb}} w^{sb}_i|\res_{sb,\theta}(\bar{y}_i)|^2 + \lambda \sum\limits_{i=1}^{N_{int}} w_i|\res_{\theta}(y_i)|^2 ,
\end{equation}
with hyperparamter $\lambda$, residuals defined in \eqref{eq:resdwv}, \eqref{eq:reswv} and \eqref{eq:reswvb}, training points defined above and weights $w^d_j$, $w_i$ and $w^{sb}_i$, corresponding to quadrature rules \eqref{eq:dqd}, \eqref{eq:quad} and \eqref{eq:bqd1}, respectively.
\begin{figure}[h!]
    \begin{subfigure}{.48\textwidth}
        \centering
        \includegraphics[width=1\linewidth]{{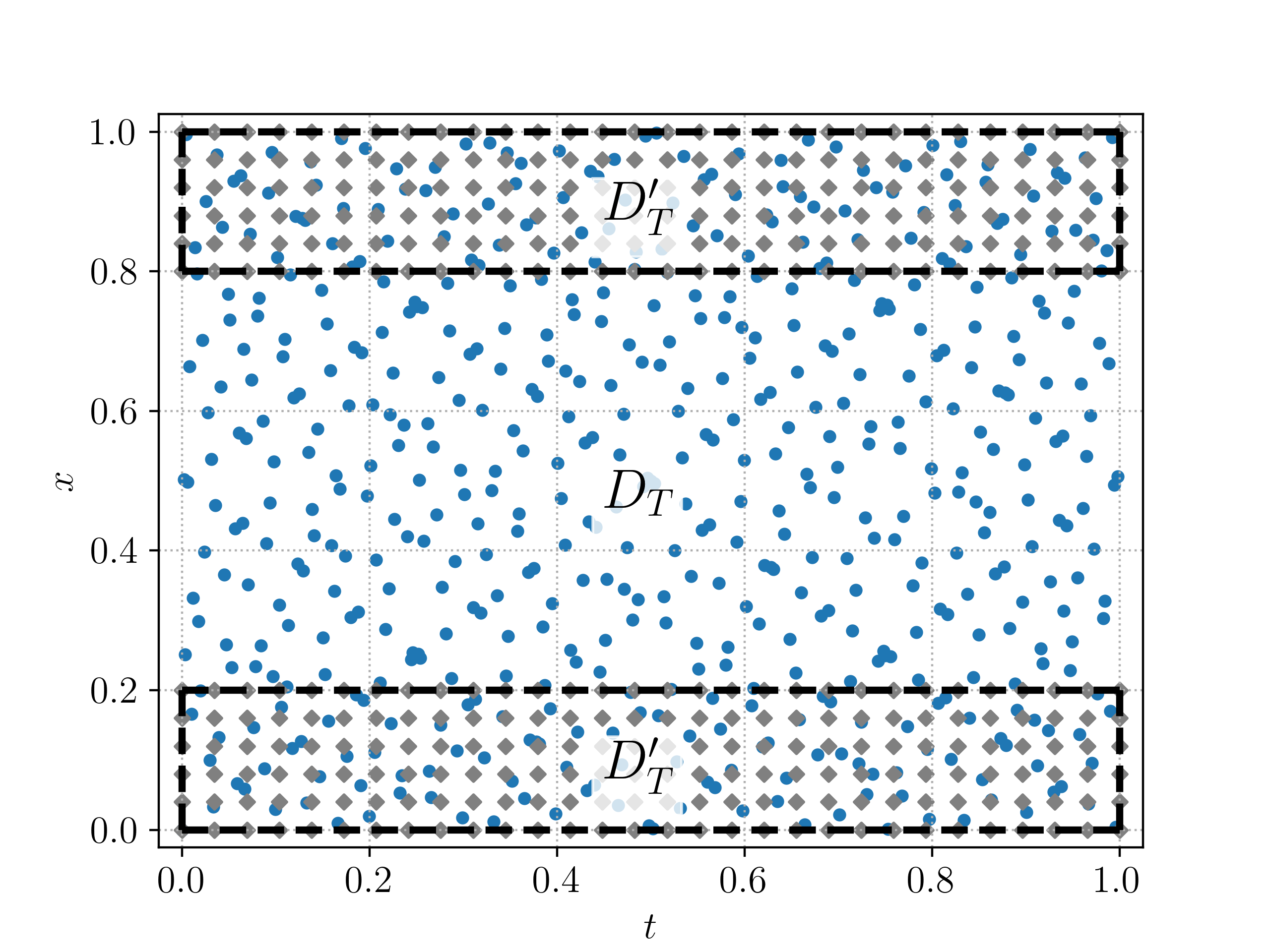}}
        \caption{Domain $D_T^{\prime}$ satisfying geometric control condition}
    \end{subfigure}
     \begin{subfigure}{.48\textwidth}
        \centering
        \includegraphics[width=1\linewidth]{{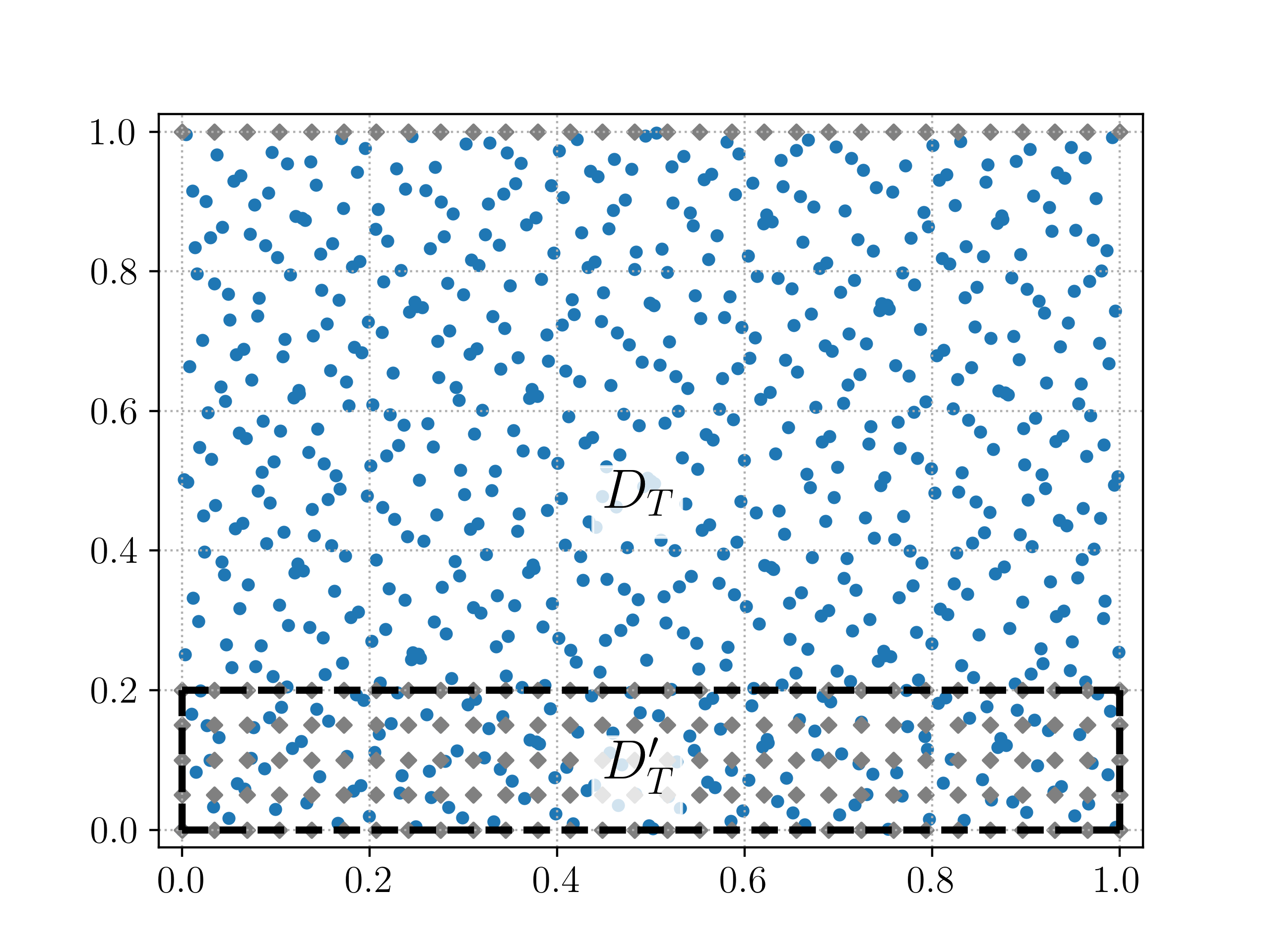}}
        \caption{Domain $D_T^{\prime}$ not satisfying geometric control condition}
    \end{subfigure}
    \caption{The domains $D_T,D_T^{\prime}$ for the numerical experiment for the wave equation. Left: Domain satisfying the geometric control condition (GCC), Right: Domain not satisfying the GCC. Training sets $\train_{int}$ are Sobol points (blue dots) and training set $\train_d\cup\train_{sb}$ are Cartesian grid points (grey squares).}
\label{fig:w1}
\end{figure}
\subsection{Estimates on the generalization error}
Denoting $u^{\ast} = u_{\theta^{\ast}}$ as the PINN, generated by algorithm \ref{alg:PINN} to approximate the data assimilation problem \eqref{eq:wv}, \eqref{eq:dtwv}, for the wave equation, we define the following generalization error,
\begin{equation}
    \label{eq:gerwv}
    \er_G := \|u-u^{\ast}\|_{C^1([0,T];H^{-1}(D))\cap C([0,T];L^{2}(D))}  = \sup\limits_{t \in [0,T]} \left(\|u(\cdot,t)- u^{\ast}(\cdot,t)\|_{L^2(D)} + \|\partial_t u(\cdot,t)- u^{\ast}(\cdot,t)\|_{H^{-1}(D)} \right).
\end{equation}
As in theorem \ref{thm:1} and the previous examples, we will bound the generalization error in terms of the following training errors,
\begin{equation}
    \label{eq:etrnwv}
    \begin{aligned}
    \er_{d,T} = \left(\sum\limits_{j=1}^{N_d} w^d_j|\res_{d,\theta^{\ast}}(z_j)|^2\right)^{\frac{1}{2}}, \quad 
    \er_{int,T} &= \left(\sum\limits_{i=1}^{N_{int}} w_i|\res_{\theta^{\ast}}(y_i)|^2\right)^{\frac{1}{2}}, \quad 
    \er_{sb,T} &= \left(\sum\limits_{i=1}^{N_{sb}} w^{sb}_i|\res_{\theta^{\ast}}(\bar{y}_i)|^2\right)^{\frac{1}{2}}.
    \end{aligned}
\end{equation}
The training errors $\er_{int,T},\er_{sb,T}$ and $\er_{d,T}$, can be readily computed from the loss functions \eqref{eq:lf2}, \eqref{eq:lfwv}, a posteriori. We have the following estimate on the generalization error in terms of the training error,
\begin{lemma}
\label{lem:w1}
Let the domain $D^{\prime}_T$ satisfy the geometric control condition in $D_T$. For $f \in C^{k-2}(D_T)$ and $g \in C^k(D^{\prime}_T)$, with $k \geq 2$, let $u \in C([0,T)];L^2(D)) \cap C^1([0,T];H^{-1}(D))$ be the solution of the data assimilation corresponding to the wave equation \eqref{eq:wv} and satisfies the data \eqref{eq:dtwv}. Let $u^{\ast} = u_{\theta^{\ast}} \in C^k(D_T)$ be a PINN generated by the algorithm \ref{alg:PINN}, with loss functions \eqref{eq:lf2}, \eqref{eq:lfwv}. Then, the generalization error \eqref{eq:gerwv} is bounded by, 
\begin{equation}
    \label{eq:wbd}
    \er_G \leq \kappa \left(\er_{d,T}+ \er_{int,T}+ \er_{sb,T} + C_{q}^{\frac{1}{2}}N_{int}^{-\frac{\alpha}{2}} + C_{bd}^{\frac{1}{2}}N_{sb}^{-\frac{\alpha_{sb}}{2}}+ C_{qd}^{\frac{1}{2}}N_d^{-\frac{\alpha_d}{2}}\right),
\end{equation}
for observability constant $\kappa$ depending on the underlying geometry, $T,u,u^{\ast}$ and with constants $C_q = C_q\left(\|\res_{\theta^{\ast}}\|_{C^{k-2}(D_T)}\right),C_{bd} = C_{bd}\left(\|\res_{sb,\theta^{\ast}}\|_{C^{k}(\bD\times(0,T))}\right)$ and $C_{qd} = C_{qd}\left(\|\res_{d,\theta^{\ast}}\|_{C^{k}(D^{\prime}_T)}\right)$, given by the quadrature error bounds \eqref{eq:qassm}, \eqref{eq:bqd} and \eqref{eq:dqassm}, respectively.
\end{lemma}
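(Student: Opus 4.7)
The plan is to mirror the proofs of Lemma 3.1 and Lemma 4.1, using Theorem 5.2 (the GCC-based stability estimate of \cite{BO3}) in place of the elliptic/parabolic stability results. For notational convenience, I will write $\res, \res_{sb}, \res_d$ for the residuals evaluated at the trained parameters $\theta^\ast$.

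First I would set $\hat{u} := u^\ast - u$ and exploit linearity of the wave operator, of the trace on $\bD\times(0,T)$, and of the restriction to $D^\prime_T$. By the definitions \eqref{eq:reswv}, \eqref{eq:reswvb}, \eqref{eq:resdwv} and the fact that $u$ solves \eqref{eq:wv}, \eqref{eq:dtwv} exactly, $\hat u$ satisfies
\begin{equation*}
\hat u_{tt} - \Delta \hat u = \res \text{ in } D_T, \qquad \hat u = \res_{sb} \text{ on } \bD\times(0,T), \qquad \hat u = \res_d \text{ on } D^\prime_T.
\end{equation*}
Because $u^\ast \in C^k(D_T)$ and $u \in C([0,T];L^2(D))\cap C^1([0,T];H^{-1}(D))$ by assumption, $\hat u$ has just enough regularity for the hypotheses of Theorem \ref{thm:w1}: in particular $\hat u(\cdot,0) \in L^2(D)$, $\partial_t \hat u(\cdot,0)\in H^{-1}(D)$, and $\hat u|_{\bD\times(0,T)}\in L^2(\bD\times(0,T))$.

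Next, I would apply the observability estimate \eqref{eq:wvst} to $\hat u$. Since the GCC holds for $D^\prime_T$ in $D_T$ by hypothesis, this yields
\begin{equation*}
\er_G = \sup_{t\in[0,T]} \left( \|\hat u(\cdot,t)\|_{L^2(D)} + \|\partial_t \hat u(\cdot,t)\|_{H^{-1}(D)}\right) \leq \kappa\left(\|\res_d\|_{L^2(D^\prime_T)} + \|\res\|_{L^2(D_T)} + \|\res_{sb}\|_{L^2(\bD\times(0,T))}\right),
\end{equation*}
with observability constant $\kappa$ depending on the geometry and on $T$. This is the analogue of inequality \eqref{eq:hl1} in the proof of Lemma \ref{lem:h1}.

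Finally, I would close the loop by recognising the three squared training errors in \eqref{eq:etrnwv} as the quadrature approximations of $\|\res_d\|^2_{L^2(D^\prime_T)}$, $\|\res\|^2_{L^2(D_T)}$, and $\|\res_{sb}\|^2_{L^2(\bD\times(0,T))}$ for the quadrature rules \eqref{eq:dqd}, \eqref{eq:quad}, and \eqref{eq:bqd1}, respectively. The assumed $C^k$-regularity of $u^\ast$ (and of $f,g$) ensures the integrands lie in the regularity classes appearing in the quadrature error bounds \eqref{eq:dqassm}, \eqref{eq:qassm}, \eqref{eq:bqd}, so that
\begin{equation*}
\|\res\|_{L^2(D_T)} \leq \er_{int,T} + C_q^{1/2} N_{int}^{-\alpha/2},\qquad \|\res_{sb}\|_{L^2(\bD\times(0,T))} \leq \er_{sb,T} + C_{bd}^{1/2} N_{sb}^{-\alpha_{sb}/2},\qquad \|\res_d\|_{L^2(D^\prime_T)} \leq \er_{d,T} + C_{qd}^{1/2} N_d^{-\alpha_d/2}.
\end{equation*}
Substituting these three bounds into the observability inequality produces \eqref{eq:wbd}. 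The only non-routine part of the argument is verifying admissibility of $\hat u$ in Theorem \ref{thm:w1}; everything else is a direct template copy of the heat-equation proof. I do not expect genuine obstacles, since GCC is taken as a hypothesis and the remaining steps are linearity plus quadrature bookkeeping.
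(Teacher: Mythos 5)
Your proposal is correct and follows essentially the same route as the paper's own proof: define $\hat u = u^\ast - u$, use linearity to identify the residuals as the data for $\hat u$, apply the observability estimate \eqref{eq:wvst} under the GCC hypothesis, and convert the $L^2$ residual norms into training errors plus quadrature-error terms via \eqref{eq:qassm}, \eqref{eq:bqd}, \eqref{eq:dqassm}. The only addition is your explicit check of the admissibility of $\hat u$ in Theorem \ref{thm:w1}, which the paper leaves implicit.
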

\begin{proof}
For notational simplicity, we denote $\res = \res_{\theta^{\ast}}, \res_{sb} = \res_{sb,\theta^{\ast}}$ and $\res_d = \res_{d,\theta^{\ast}}$. 

Define $\hat{u} = u^{\ast} - u  \in C^1([0,T];H^{-1}(D)) \cap C([0,T];L^2(D))$, by linearity of the differential operator and boundary conditions in \eqref{eq:wv} and the data observable in \eqref{eq:dtwv} and by definitions \eqref{eq:reswv},\eqref{eq:reswvb},\eqref{eq:resdht}, we see that $\hat{u}$ satisfies,
\begin{equation}
    \label{eq:hwv}
    \begin{aligned}
   \hat{u}_{tt} -\Delta \hat{u} &= \res, \quad \forall (x,t) \in D_T, \\
   \hat{u}|_{\bD \times (0,T)} &= \res_{sb}, \\
    \hat{u} &= \res_d, \quad \forall (x,t) \in D^{\prime}_T.
    \end{aligned}
\end{equation}
Therefore, we can apply the observability estimate \eqref{eq:wvst} to obtain,
\begin{equation}
    \label{eq:wl1}
    \sup\limits_{t \in [0,T]} \left(||\hat{u}(\cdot,t)\|_{L^2(D)} + \|\partial_t \hat{u}(\cdot,t)\|_{H^{-1}(D)} \right) \leq \kappa \left(\|\res_d\|_{L^2(D^{\prime}_T)} + \|\res_{int}\|_{L^2(D_T)} + \|\res_{sb}\|_{L^2(\bD \times (0,T))}\right),
\end{equation}
with the observability constant $\kappa$ from \eqref{eq:wvst}. 

Realizing that the training errors $\er^2_{d,T}$, $\er_{int,T}^2$ and $\er_{sb,T}^2$ are the quadrature approximations for $\|\res_d\|^2_{L^2(D^{\prime})_T}$,  $\|\res_{int}\|^2_{L^2(D_T)}$and $\|\res_{sb}\|^2_{L^2(\bD \times (0,T))}$ with respect to the quadrature rules \eqref{eq:dqd}, \eqref{eq:quad} and \eqref{eq:bqd1}, respectively and using bounds \eqref{eq:dqassm}, \eqref{eq:qassm} and \eqref{eq:bqd} and substituting the result into \eqref{eq:wl1} yields the desired bound \eqref{eq:wbd}. 
\end{proof}

\begin{figure}[h!]

    \begin{subfigure}{.33\textwidth}
        \centering
        \includegraphics[width=1\linewidth]{{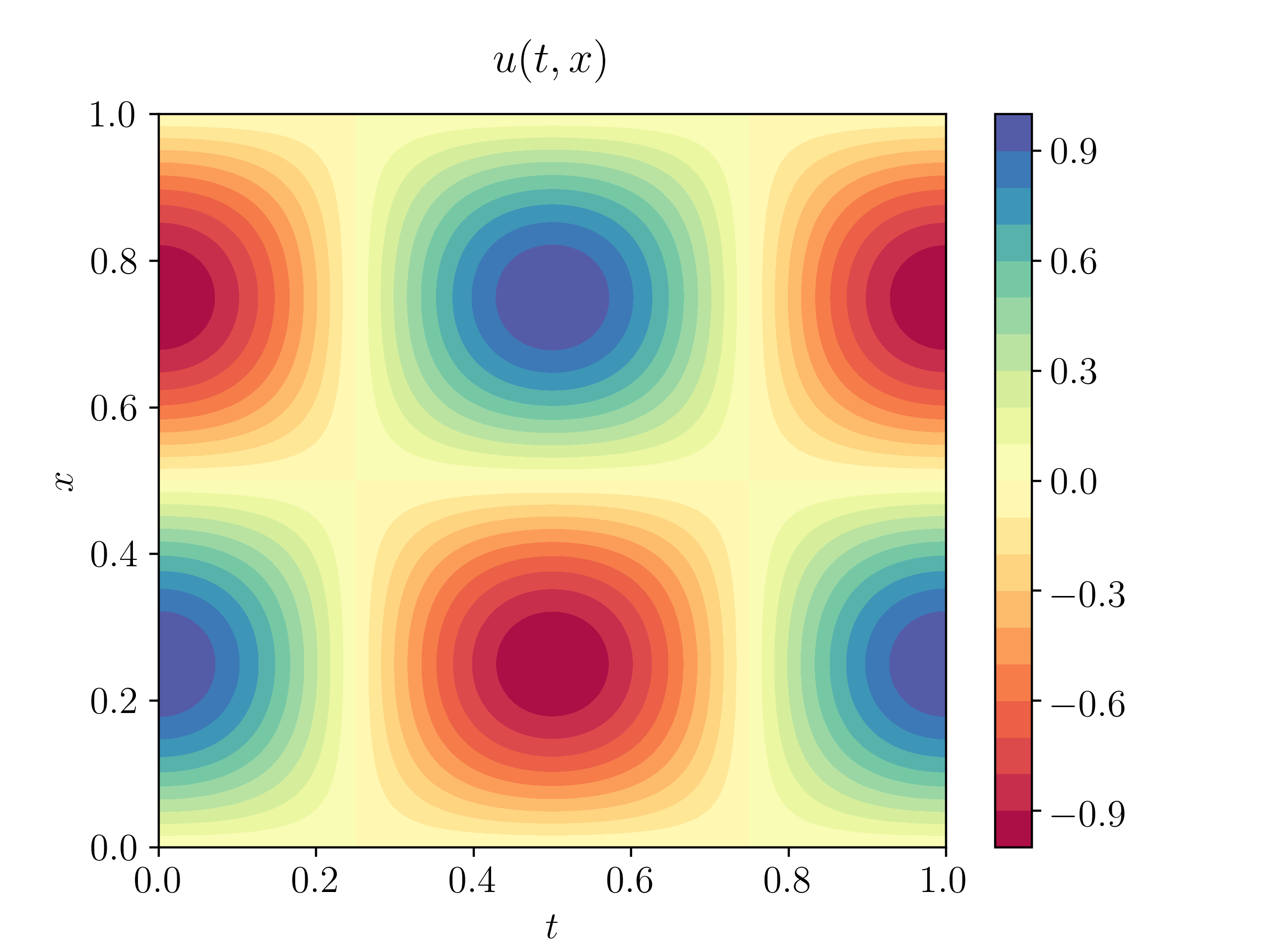}}
        \caption{Exact Solution $u$}
    \end{subfigure}
    \begin{subfigure}{.33\textwidth}
        \centering
        \includegraphics[width=1\linewidth]{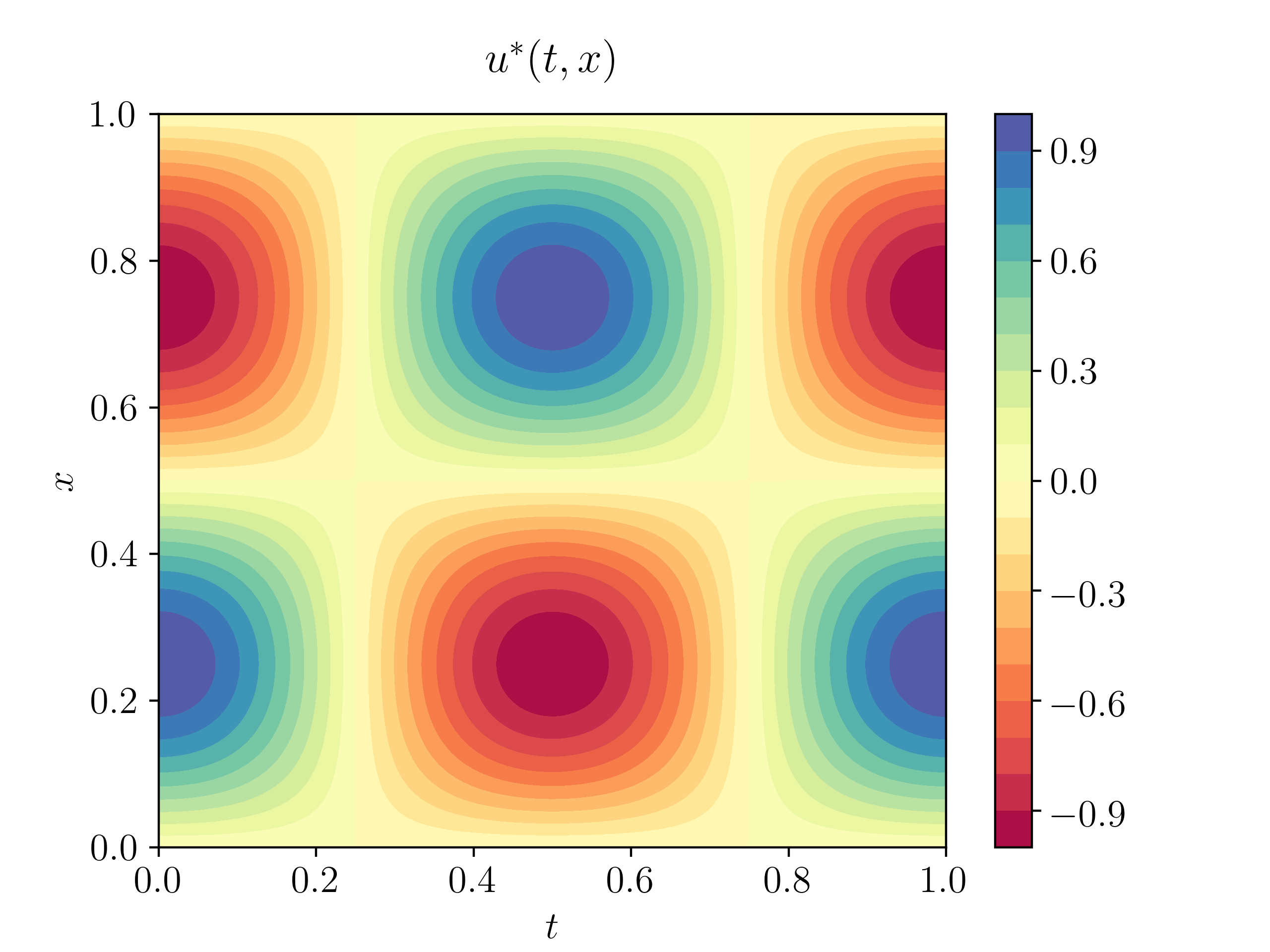}
        \caption{PINN $u^{\ast}$ }
    \end{subfigure}
     \begin{subfigure}{.33\textwidth}
        \centering
        \includegraphics[width=1\linewidth]{{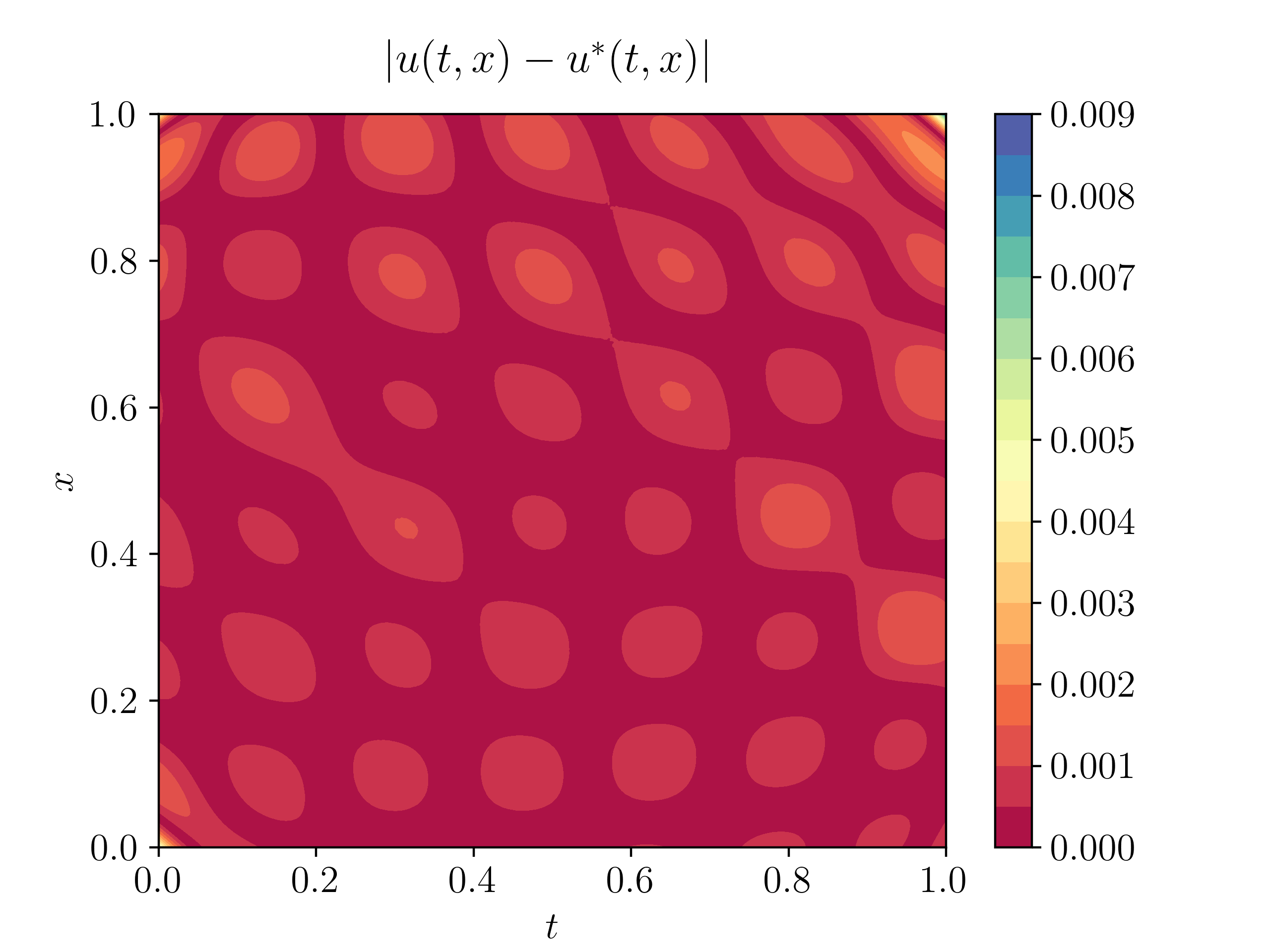}}
        \caption{Relative error $\frac{|\hat{u}|}{\|u\|_{L^2}}$}
    \end{subfigure}
  \caption{Data assimilation problem for the Wave equation in domains shown in figure \ref{fig:w1} (left) satisfying the geometric control condition. Exact solution, PINN with $N = 60 \times 60$ training points and error.}
\label{fig:w2}
\end{figure}
\begin{table}[htbp] 
    \centering
    \renewcommand{\arraystretch}{1.1} 
    
    \footnotesize{
        \begin{tabular}{ c c c c  c c c} 
            \toprule
           $N$  &\bfseries $K-1$ & \bfseries $\tilde{d}$  &$\lambda_{reg}$ &\bfseries $\lambda$&  $\er_T$ &  $||u - u^\ast||_{L^2}$    \\ 
            \midrule
            \midrule 
            $60\times60$ &4 & 24&0.0&0.001      &0.00125  & 0.29 $\%$ \\
                \midrule 
            $90\times90$ &4 & 20&0.0&0.001   &0.0011 & 0.28 $\%$ \\
            \midrule 
            $120\times120$   &4 & 24&0.0&0.001  &0.00085 & 0.21 $\%$ \\
            \bottomrule
        \end{tabular}
        \caption{$1$-D Wave equation with observation domain satisfying the geometric control condition and shown in figure \ref{fig:w1}(left): relative percentage $L^2$ errors for different values of the number of training samples.}
        \label{tab:w1}
    }
\end{table}

\subsection{Numerical experiments}
We present the following experiment proposed in \cite{BO3}, where the authors considered the wave equation in one space dimension, in the domain $D = [0,1]$ and with final time $T=1$. The source term $f$ and data term $g$ in \eqref{eq:dtwv} are generated from the exact solution 
\begin{equation}
    \label{eq:wex}
    u(t,x) = \sin(2\pi t)\sin(2\pi x),
\end{equation}
resulting in $f=0$ and also satisfying the zero Dirichlet boundary conditions of \eqref{eq:wv}. 

For the first numerical experiment, we choose $D^{\prime} = (0,0.2) \cup (0.8,1)$ as the domain on which data $g$ is specified, see figure \ref{fig:w1} for an illustration of the domains. Note that the resulting observation domain $D^{\prime}_T$ satisfies the geometric control condition \cite{BO3}. For these domains, we choose training sets as follows: the training set $\train_{int}$ consists of Sobol points and the training sets $\train_{sb}$  and $\train_d$ are Cartesian grid points.

In figure \ref{fig:w2}, we plot the exact solution, the PINN and the resulting error, corresponding to $N = N_{int} + N_{sb} + N_{d}= 60 \times 60$ and $N_{int} = 0.6N$ training points. We see from this figure that the PINN approximates the underlying solution very well, with errors being small and distributed throughout the space-time domain. The accuracy of the PINNs is further confirmed in Table \ref{tab:w1}, where we present the errors for a sequence of PINNs (with increasing number of training points). We focus on the $ \sup\limits_{t \in [0,T]}\Big(\|u(\cdot,t)-u(\cdot,t)^{\ast}\|_{L^2(D)}\Big)$ error, which can be readily bounded above by the generalization error bound \eqref{eq:wbd}. We see from this table that the approximation errors are very low, with less than $0.3\%$ relative error, already with $60 \times 60$ training points. This error decays further but saturates around a value of $0.2\%$ for more training points. 
\begin{figure}[h!]
     \begin{subfigure}{.33\textwidth}
        \centering
        \includegraphics[width=1\linewidth]{{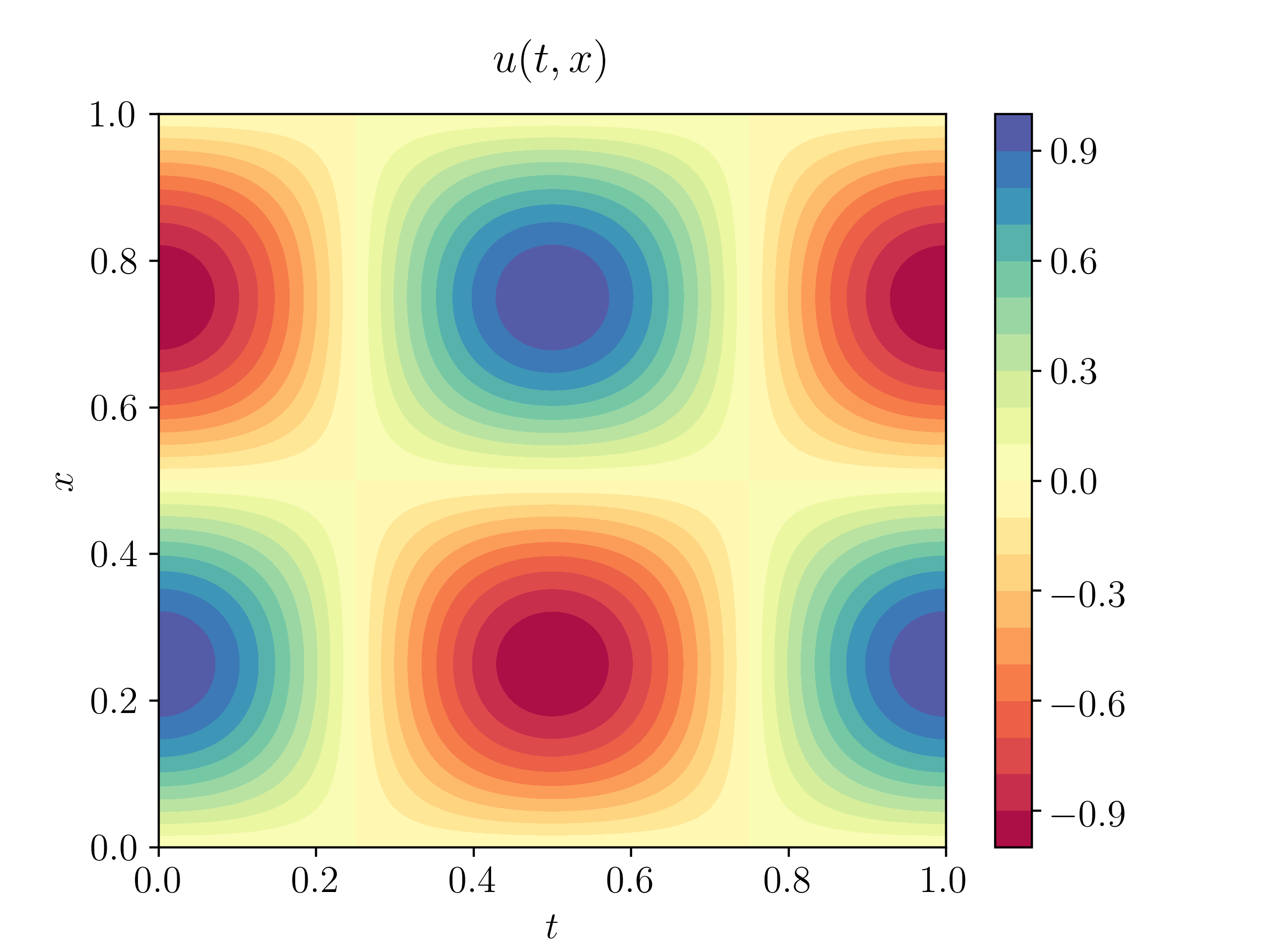}}
        \caption{Exact Solution $u$}
    \end{subfigure}
    \begin{subfigure}{.33\textwidth}
        \centering
        \includegraphics[width=1\linewidth]{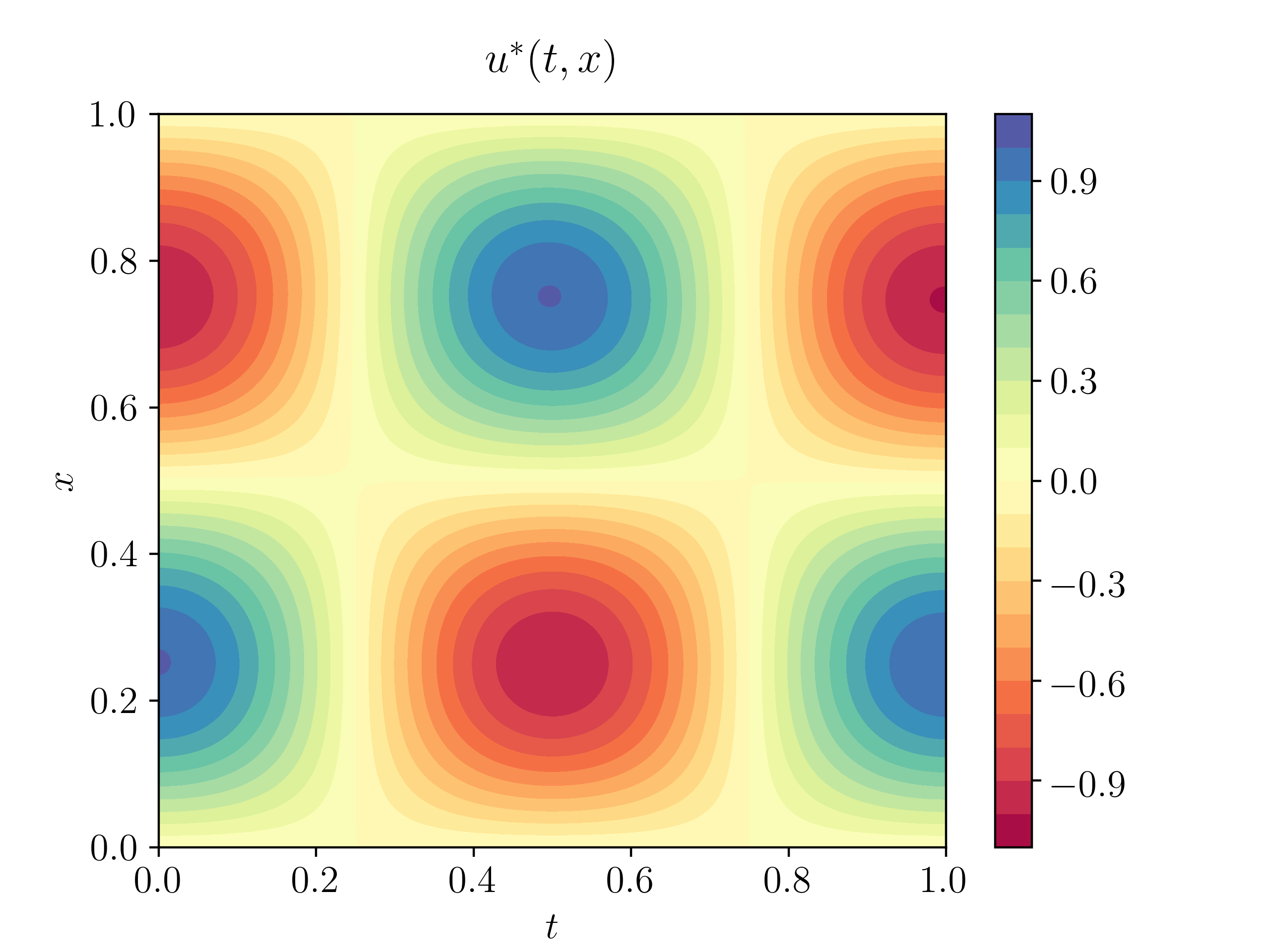}
        \caption{PINN $u^{\ast}$ }
    \end{subfigure}
     \begin{subfigure}{.33\textwidth}
        \centering
        \includegraphics[width=1\linewidth]{{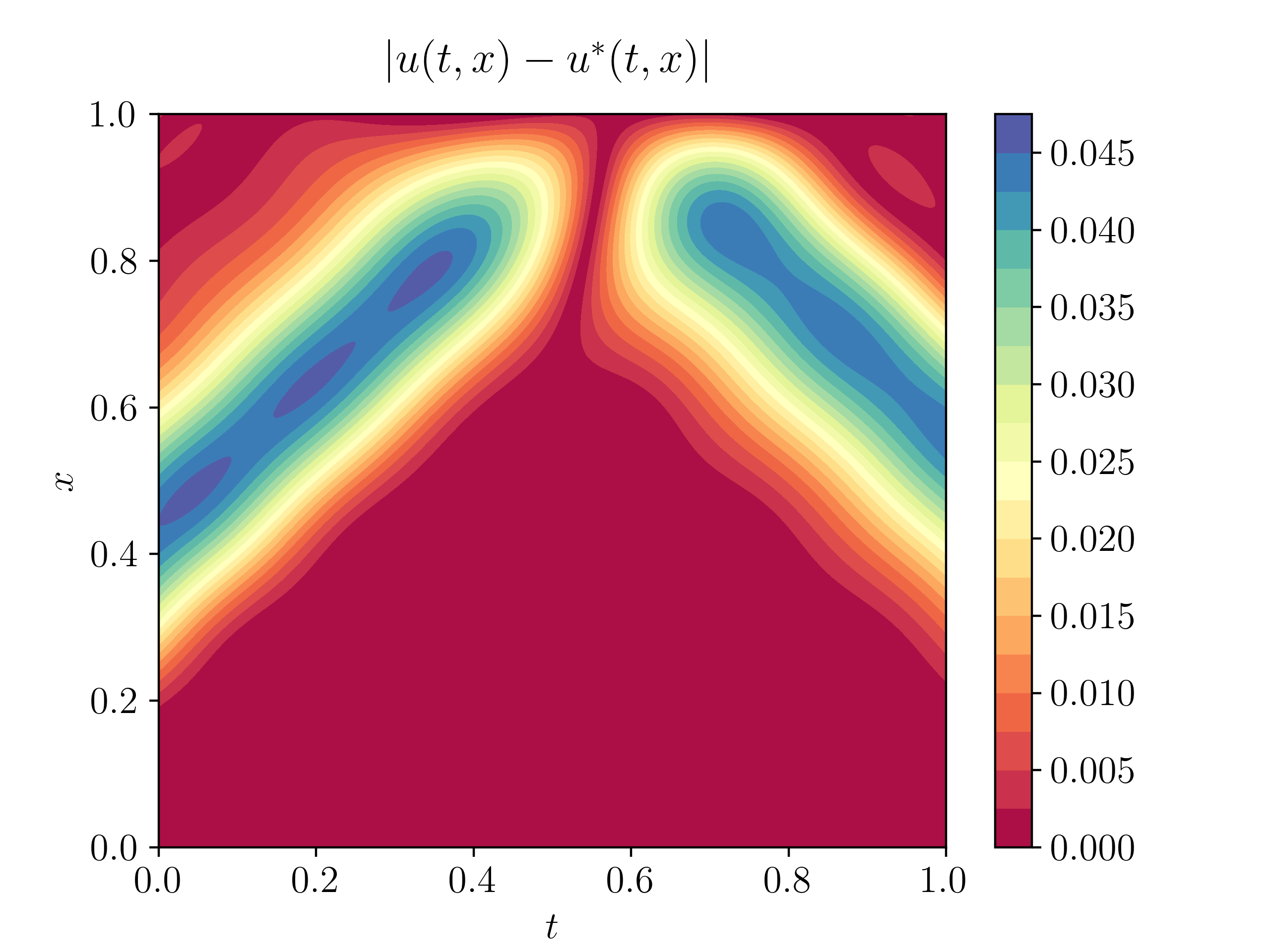}}
        \caption{Relative error $\frac{|\hat{u}|}{\|u\|_{L^2}}$}
    \end{subfigure}
  \caption{Data assimilation problem for the Wave equation in domains shown in figure \ref{fig:w1} (right) satisfying the geometric control condition. Exact solution, PINN with $N = 60 \times 60$ training points and error.}
\label{fig:w3}
\end{figure}

For the second numerical experiment with the wave equation, we choose $D^{\prime} = (0,0.2)$ as the domain on which data $g$ is specified. Note that this domain does not satisfy the geometric control condition \cite{BO3}. The interior, spatial boundary and data training points are chosen similarly to the previous numerical experiment and are illustrated in figure \ref{fig:w1} (right). In figure \ref{fig:w3}, we plot the exact solution and the PINN (and the error), corresponding to $N = N_{int}+ N_{sb} + N_{d} = 60 \times 60$, and $N_{int} = 0.8N$ training points. We see from this figure that although the geometric control condition is not satisfied, the PINN seems to approximate the underlying exact solution rather well. However, a close inspection of the error, plotted in figure \ref{fig:w3} (right) reveals that the error is significantly greater than in the previous numerical experiment where the domain $D^{\prime}_T$ satisfied the geometric control condition. In particular, we see that the error seems to be transported along rays that do not belong to the observable part of the boundary. Nevertheless and as observed in \cite{BO3}, the errors are of small amplitude, even for this example. This is further verified in Table \ref{tab:w2} where we present the generalization errors for this example for a sequence of PINNs, with increasing numbers of training points. From this table, we observe decay of the error with increasing number of training points. The overall generalization error is quite low, around $1.4\%$ for largest number of training points considered here. Note that this error is still an order of magnitude larger than in the previous numerical experiment, where the geometric control condition was satisfied. This experiment nicely illustrates the role of the geometric control condition of \cite{BLR} in this context. 
\begin{table}[htbp] 
    \centering
    \renewcommand{\arraystretch}{1.1} 
    
    \footnotesize{
        \begin{tabular}{ c c c c  c c c} 
            \toprule
            $N$  &\bfseries $K-1$ & \bfseries $\tilde{d}$  &$\lambda_{reg}$ &\bfseries $\lambda$&  $\er_T$ &  $||u - u^\ast||_{L^2}$    \\ 
            \midrule
            \midrule 
            $60\times60$ &4 & 24&0.0&0.001      &0.0011  & 2.2 $\%$ \\
                \midrule 
            $90\times90$ &4 & 24&0.0&0.001   &0.00087 & 1.5 $\%$ \\
            \midrule 
            $120\times120$   &4 & 24&0.0&0.001  &0.00081 & 1.4 $\%$ \\
            \bottomrule
        \end{tabular}
        \caption{$1$-D Wave equation with observation domain not satisfying the geometric control condition and shown in figure \ref{fig:w1} (right): relative percentage $L^2$ errors for different values of the number of training samples.}
        \label{tab:w2}
    }
\end{table}

\section{The Stokes equation}
\label{sec:6}
The effectiveness of PINNs in approximating inverse problems was brilliantly showcased in the recent paper \cite{KAR4}, where the authors proposed PINNs for the data assimilation problem with the Navier-Stokes equation. As a first step towards rigorously analyzing this, we will focus on the much simpler model of the stationary Stokes equation below.
\subsection{The underlying inverse problem}
Let $D \subset \R^d$ be an open, bounded, simply connected set with smooth boundary. We consider the Stokes' equations as a model of stationary, highly viscous fluid:
\begin{equation}
    \label{eq:st}
    \begin{aligned}
    \Delta \bu + \nabla p &=\f, \quad \forall x \in D, \\
    \div(\bu) &= f_d, \quad \forall x \in D.
    \end{aligned}
\end{equation}
Here, $\bu: D \mapsto \R^d$ is the velocity field, $p: D \mapsto \R$ is the pressure and $\f: D\mapsto \R^d$, $f_d: D \mapsto \R$ are source terms. 

Note that the Stokes equation \eqref{eq:st} is not well-posed as we are not providing any boundary conditions. In the corresponding data assimilation problem \cite{BH1} and references therein, one provides the following \emph{data},
\begin{equation}
    \label{eq:dtst}
    \bu = \g, \quad \forall x \in D^{\prime},
\end{equation}
for some open, simply connected set $D^{\prime} \subset D$.
Thus, the data assimilation inverse problem for the Stokes equation amounts to inferring the velocity field $\bu$ (and the pressure $p)$), given $\f,f_d$ and $\g$. In particular, we wish to find solutions $\bu \in H^1(D;\R^d)$ and $p \in L_0^2(D)$ (i.e, square integrable functions with zero mean), such that the following holds, 
\begin{equation}
    \label{eq:wkst}
    \begin{aligned}
    \int\limits_D \nabla \bu\cdot \nabla \bv dx + \int\limits_D p \div(\bv) dx &= \int\limits_D \f \bv dx, \\
    \int\limits_D \div(\bu) w dx &= \int\limits_D f_d w dx, \end{aligned}
\end{equation}
for all test functions $\bv \in H^1_0(D;\R^d)$ and $w \in L^2(D)$.

The well-posedness and conditional stability estimates for the data assimilation problem for the Stokes equation \eqref{eq:st}, \eqref{eq:dtst} has been extensively investigated in \cite{Uhl} and references therein. In particular, we have the following stability estimate,
\begin{theorem}
\label{thm:st1}
Let $D^{\prime} \subset D$ and let $B_{R_1}(x_0)$ be the largest ball, $B_{R_1}(x_0) \subset D^{\prime}$. Let $\bu \in H^1(D;\R^d)$ and $p \in L_0^2(D)$ satisfy \eqref{eq:wkst} for all test functions $\bv \in H^1_0(D;\R^d)$ and $w \in L^2(D)$. Then for any $\f \in L^2(D;\R^d)$, $f_d \in L^2(D)$ and for any $R_2 > R_1$ such that $B_{R_2}(x_0) \subset D$, the following bound holds,
\begin{equation}
    \label{eq:stst}
\|\bu\|^2_{L^2(B_{R_2}(x_0))} \leq C\left( \cost_{\f,f_d} + \cost_{\bu}^{\tau}\cost_{\f,f_d}^{1-\tau} + \left(\cost_{\f,f_d}^{1-\tau}\cost_{\bu}^{1-\tau}\right)\|\bu\|_{L^2(D^{\prime})}^{2\tau}\right),
\end{equation}
with constants defined as,
\begin{equation}
    \label{eq:stc}
    \cost_{\f,f_d}:= \|\f|^2_{L^2(D)} + \|f_d\|^2_{L^2(D)}, \quad \cost_{\bu}:= \|\bu\|_{L^2(D)}^2.
\end{equation}

\end{theorem}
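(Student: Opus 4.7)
The plan is to derive \eqref{eq:stst} from a Carleman-based three-balls inequality for the Stokes system, as developed in \cite{Uhl}. There are essentially three stages: decoupling $p$ from $\bu$, invoking the three-balls inequality, and rearranging the resulting bound by elementary interpolation.

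First I would eliminate the pressure. Taking $\div$ of the momentum equation in \eqref{eq:st} and using $\div\bu = f_d$ gives $\Delta p = \div\f - \Delta f_d$ in $D$, so the pressure satisfies an elliptic equation whose data are controlled by $\cost_{\f,f_d}$ in an appropriate norm. Equivalently, taking $\text{curl}$ of the momentum equation removes $\nabla p$ altogether and yields $\Delta(\text{curl}\,\bu) = \text{curl}\,\f$. Coupled with $\div\bu = f_d$, this reduces Stokes to two scalar elliptic problems, from which $\bu$ can be reconstructed up to globally controlled harmonic pieces.

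The heart of the argument is the three-balls estimate for the Stokes operator established by Lin--Uhlmann--Wang (see \cite{Uhl} and references therein): for concentric balls with $B_{R_1}(x_0) \subset D^{\prime}$ and $B_{R_2}(x_0) \subset D$, there exist $\tau \in (0,1)$ and $C > 0$ such that
\begin{equation*}
\|\bu\|_{L^2(B_{R_2})} \leq C \bigl(\|\bu\|_{L^2(D^{\prime})} + \cost_{\f,f_d}^{1/2}\bigr)^{\tau} \bigl(\|\bu\|_{L^2(D)} + \cost_{\f,f_d}^{1/2}\bigr)^{1-\tau}.
\end{equation*}
I would invoke this inequality as a black box, since its proof rests on a Carleman weight specifically adapted to Stokes in which $p$ enters as a Lagrange-type multiplier; this is by far the hardest technical ingredient and the main obstacle in any self-contained proof.

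With the three-balls bound in hand, the final step is purely algebraic. Squaring, applying $(a+b)^{p} \leq C_{p}(a^{p}+b^{p})$ to each factor (separately for $p = 2\tau$ and $p = 2(1-\tau)$), and expanding the resulting product yields four summands of the form $\|\bu\|_{L^2(D^{\prime})}^{2\tau}\cost_{\bu}^{1-\tau}$, $\|\bu\|_{L^2(D^{\prime})}^{2\tau}\cost_{\f,f_d}^{1-\tau}$, $\cost_{\f,f_d}^{\tau}\cost_{\bu}^{1-\tau}$, and $\cost_{\f,f_d}^{\tau}\cost_{\f,f_d}^{1-\tau} = \cost_{\f,f_d}$. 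Bundling the two pure-data mixed terms via Young's inequality with conjugate exponents tailored to $\tau$ reproduces precisely the three summands appearing on the right-hand side of \eqref{eq:stst}. All of this last step is routine bookkeeping once the Stokes three-balls inequality from \cite{Uhl} is available.
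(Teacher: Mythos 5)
Your proof hinges entirely on an \emph{inhomogeneous} three-balls inequality for the Stokes system,
\begin{equation*}
\|\bu\|_{L^2(B_{R_2})} \leq C \bigl(\|\bu\|_{L^2(D^{\prime})} + \cost_{\f,f_d}^{1/2}\bigr)^{\tau} \bigl(\|\bu\|_{L^2(D)} + \cost_{\f,f_d}^{1/2}\bigr)^{1-\tau},
\end{equation*}
which you invoke as a black box from \cite{Uhl}. That is not what \cite{Uhl} provides: the three-balls inequality established there is for solutions of the \emph{homogeneous} Stokes system ($\f = 0$, $f_d = 0$), and the entire substance of the paper's proof is the reduction of the inhomogeneous case to the homogeneous one. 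Concretely, the paper writes $\bu = \bu_1 + \bu_2$, where $(\bu_1,p_1)$ solves the Stokes system with the given sources $\f, f_d$ and \emph{zero Dirichlet boundary data} --- so that classical well-posedness gives $\|\bu_1\|_{H^1(D)} + \|p_1\|_{L^2(D)} \leq C(\|\f\|_{L^2(D)} + \|f_d\|_{L^2(D)})$ --- and $(\bu_2,p_2) = (\bu-\bu_1, p-p_1)$ solves the homogeneous system, to which the three-balls inequality of \cite{Uhl} legitimately applies. Substituting $\bu_2 = \bu - \bu_1$ back and expanding produces the mixed terms in \eqref{eq:stst}. By postulating the inhomogeneous inequality outright you have assumed essentially the conclusion; the step you call ``the main obstacle in any self-contained proof'' is in fact handled by an elementary lifting argument, not by a source-adapted Carleman weight, and that lifting is the missing idea in your write-up.

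Two smaller points. First, your opening paragraph (eliminating $p$ by taking $\div$ or $\mathrm{curl}$ of the momentum equation and ``reconstructing $\bu$ up to globally controlled harmonic pieces'') is never used in the remainder of your argument and is not substantiated; it should be dropped or carried out. Second, your final expansion produces the summand $\|\bu\|_{L^2(D^{\prime})}^{2\tau}\cost_{\bu}^{1-\tau}$, which is not one of the three terms on the right-hand side of \eqref{eq:stst} (the corresponding term there carries an extra factor $\cost_{\f,f_d}^{1-\tau}$) and cannot be absorbed by Young's inequality into the stated form without additional assumptions; your claim that the bookkeeping ``reproduces precisely'' the three summands is therefore not accurate as written, although the paper's own final display exhibits the same discrepancy with \eqref{eq:stst}.
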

\begin{proof}
Let $\bu_1,p_1$ be solutions of the following boundary-value problem for the Stokes equations,
\begin{equation}
    \label{eq:sth}
    \begin{aligned}
    \Delta \bu_1 + \nabla p_1 &= \f, \quad \forall x \in D, \\
    \div(\bu_1) &= f_d, \quad \forall x \in D, \\
      \bu_1 &\equiv 0, \quad \forall x \in \bD.
    \end{aligned}
\end{equation}
We know from classical theory for the Stokes' equation \cite{SER}, \cite{BH1}(estimate 2.2) that,
\begin{equation}
    \label{eq:pst1}
    \|\bu_1\|_{H^1(D)} + \|p_1\|_{L^2(D)} \leq C \left(\|\f\|_{L^2(D)} + \|f_d\|_{L^2(D)}\right),
\end{equation}
for some constant $C$ that depends only on the domain $D$.

Define $\bu_2,p_2$ as the solutions of the following homogeneous Stokes' equations,
\begin{equation}
    \label{eq:sth1}
    \begin{aligned}
    \Delta \bu_2 + \nabla p_2 &=0, \quad \forall x \in D, \\
    \div(\bu_2) &= 0, \quad \forall x \in D.
    \end{aligned}
\end{equation}
We know that for any $R_1 < R_2 < R_3$, such that $B_{R_2}(x_0) \subset D$, the following \emph{three balls} inequality \cite{Uhl} holds for the solutions $\bu_2 \in H^1(D;\R^d)$ and $p \in H^1(D)$of the homogeneous Stokes equations \eqref{eq:sth1},
\begin{equation}
    \label{eq:3bls}
    \int_{B_{R_2}(x_0)} |\bu_2|^2 dx \leq C\left(\int_{B_{R_1}(x_0)} |\bu_2|^2 dx \right)^{\tau}\left(\int_{B_{R_3}(x_0)} |\bu_2|^2 dx \right)^{1- \tau},
\end{equation}
with a constant $C$ and $\tau \in (0,1)$ that depends on $\frac{R_1}{R_3}$, $\frac{R_2}{R_3}$ and $d$ (see \cite{Uhl} for more on the constants). 

It is easy to check that $\bu = \bu_1 + \bu_2$ and $p = p_1 + p_2$, satisfy the Stokes equation \eqref{eq:st} in the weak sense, i.e they satisfy \eqref{eq:wkst}. By identifying all constants in the following calculation with a generic constant $C$, we have,
\begin{align*}
    \int_{B_{R_2}(x_0)} |\bu|^2 dx &= \int_{B_{R_2}(x_0)} |\bu_1 + \bu_2|^2 dx \\
    &\leq C \left(\int_{B_{R_2}(x_0)} |\bu_1|^2 dx  + \int_{B_{R_2}(x_0)} |\bu_2|^2 dx \right)
    \leq C \left(\int_{D} |\bu_1|^2 dx  + \int_{B_{R_2}(x_0)} |\bu_2|^2 dx \right) \\
    &\leq C\left(\|\f\|_{L^2(D)}^2 + \|f_d\|_{L^2(D)}^2 +\left(\int_{B_{R_1}(x_0)} |\bu_2|^2 dx \right)^{\tau}\left(\int_{B_{R_3}(x_0)} |\bu_2|^2 dx \right)^{1- \tau} \right) \quad ({\rm by}~\eqref{eq:pst1},\eqref{eq:3bls}) \\
    &\leq C\left(\|\f\|_{L^2(D)}^2 + \|f_d\|_{L^2(D)}^2 +\left(\int_{B_{R_1}(x_0)} |\bu-\bu_1|^2 dx \right)^{\tau}\left(\int_{B_{R_3}(x_0)} |\bu-\bu_1|^2 dx \right)^{1- \tau} \right) \\
    &\leq  C\left(\|\f\|_{L^2(D)}^2 + \|f_d\|_{L^2(D)}^2    + \|\bu\|_{L^2(B_{R_1}(x_0))}^{2\tau}\|\bu\|_{L^2(D)}^{2(1-\tau)}\right) \\ 
    &+ C\left(\|\bu\|_{L^2(B_{R_1}(x_0))}^{2\tau}\|\bu_1\|_{L^2(D)}^{2(1-\tau)} + \|\bu_1\|_{L^2(D)}^{2\tau}\|\bu\|_{L^2(D)}^{2(1-\tau)} + \|\bu_1\|^2_{L^2(D)}\right).
\end{align*}
Substituting \eqref{eq:pst1} and identifying the constants yields the desired bound \eqref{eq:stst}.

\end{proof}
As in the previous examples, this inverse problem can be recast in terms of the abstract formalism of section \ref{sec:2}, with the conditional stability estimate \eqref{eq:stst} occupying the role of \eqref{eq:assm} and making this inverse problem amenable to efficient approximation by PINNs. 
\subsection{PINNs}
We specify the algorithm \ref{alg:PINN} to generate a PINN for approximating the inverse problem \eqref{eq:st}, \eqref{eq:dtst} in the following steps,
\subsubsection{Training sets}
As the training set $\train_{int}$ in algorithm \ref{alg:PINN}, we take a set of quadrature points $y_i \in D$, for $1 \leq i \leq N_{int}$, corresponding to the quadrature rule \eqref{eq:quad}. These can be quadrature points for a grid based (composite) Gauss quadrature rule or low-discrepancy sequences such as Sobol points. Similarly, the training set $\train_{d} = \{z_j \}$ for $z_j \in D^{\prime}$, with $1 \leq j \leq N_d$, are quadrature points, corresponding to the quadrature rule \eqref{eq:dqd}.
\subsubsection{Residuals}
We will require that for parameters $\theta \in \Theta$, the neural networks $\bu_{\theta} \in C^k(D;\R^d),p_{\theta} \in C^k(D;\R)$, for $k \geq 2$.  We define the following residuals that are needed in algorithm \ref{alg:PINN}. The PDE residual \eqref{eq:res1}, consists of two parts in this case (see also \cite{MM1}, section 5), given by,
\begin{equation}
    \label{eq:resst1}
    \res_{\theta,\bu} = \Delta \bu_{\theta} +\nabla p_{\theta} - \f, \quad \forall x \in D,
\end{equation}
and
\begin{equation}
    \label{eq:resst2}
    \res_{\theta,div} = \div(\bu_{\theta})- \g, \quad \forall x \in D,
\end{equation}
The data residual is given by,
\begin{equation}
    \label{eq:resdst}
    \res_{d,\theta} = \bu_{\theta} - \g, \quad \forall x \in D^{\prime}.
\end{equation}
\subsubsection{Loss functions}
In algorithm \ref{alg:PINN} for approximating the inverse problem \eqref{eq:st}, \eqref{eq:dtst}, we will need the following loss function,
\begin{equation}
    \label{eq:lfst}
    J(\theta) = \sum\limits_{j=1}^{N_d} w^d_j|\res_{d,\theta}(z_j)|^2 + \lambda \bigg(\sum\limits_{i=1}^{N_{int}} w_i|\res_{\bu,\theta}(y_i)|^2 + \sum\limits_{i=1}^{N_{int}} w_i|\res_{div,\theta}(y_i)|^2\bigg),
\end{equation}
with hyperparamter $\lambda$, residuals defined in \eqref{eq:resps}, \eqref{eq:resdps}, training points defined above and weights $w_i$, $w^d_j$, corresponding to quadrature rules \eqref{eq:quad} and \eqref{eq:dqd}, respectively. 
\begin{figure}[h!]
        \centering
        \includegraphics[width=8cm]{{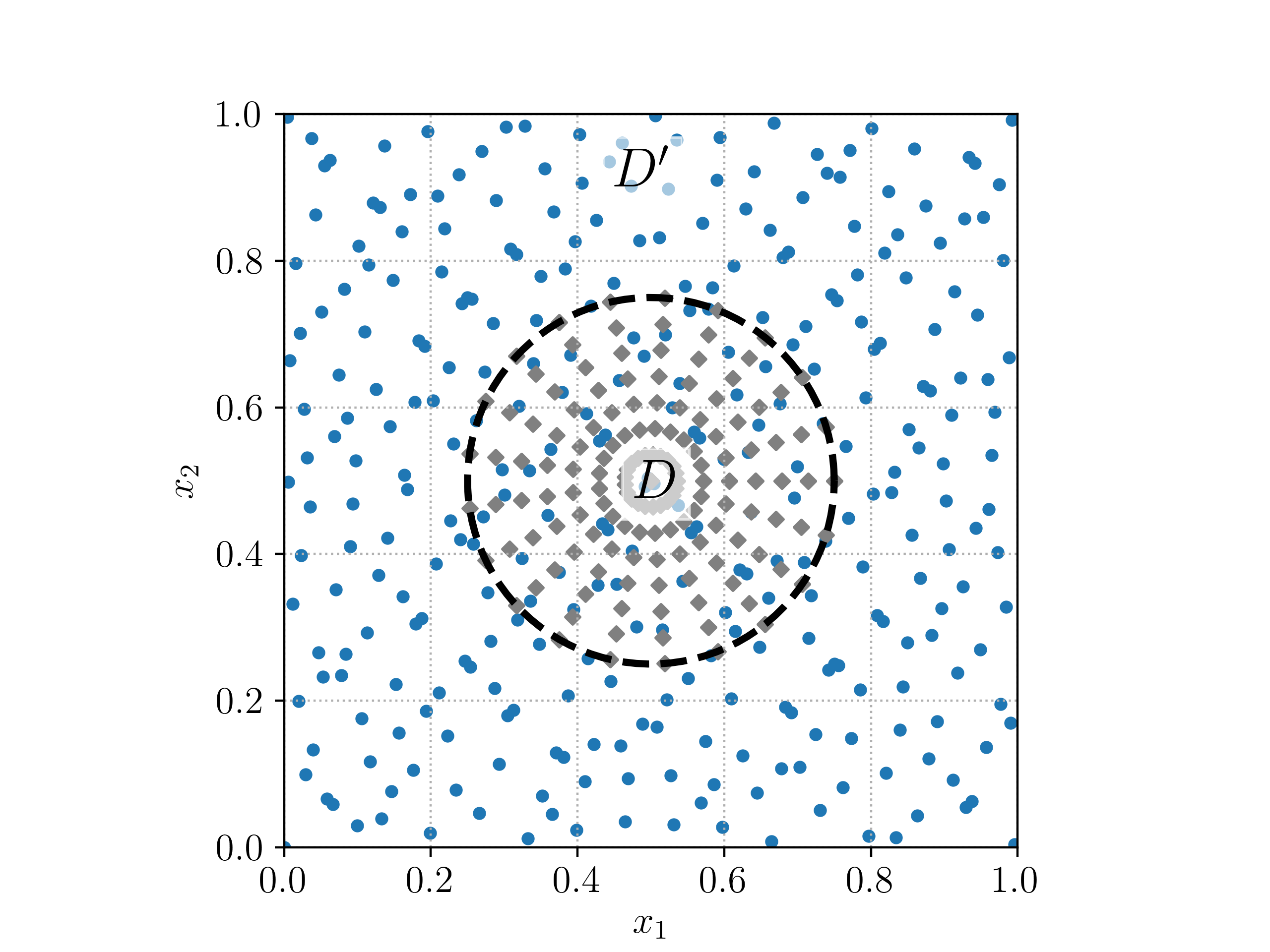}}
    \caption{The domains $D,D^{\prime}$ for the numerical experiment for the Stokes equation. Training set $\train_{int}$ are Sobol points (blue dots) and training set $\train_d$ are Radial (Cartesian) grid points (grey squares).}
\label{fig:st1}
\end{figure}

\subsection{Estimates on the generalization error}
Let $B_{R_1}(x_0)$ be the largest ball inside the observation domain $D^{\prime} \subset D$. We will consider balls $B_{R}(x_0) \in D$
such that $R > R_1$ and estimate the generalization error,
\begin{equation}
    \label{eq:gerst}
    \er_G(B_R) := \|\bu-\bu^{\ast}\|_{L^2(B_R(x_0))},
\end{equation}
with $\bu^{\ast} = \bu_{\theta^{\ast}}$ is the PINN generated by algorithm \ref{alg:PINN}. We will estimate this generalization error in terms of the training errors defined by,
\begin{equation}
    \label{eq:etrnst}
    \er_{d,T} = \left(\sum\limits_{j=1}^{N_d} w^d_j|\res_{d,\theta^{\ast}}(z_j)|^2\right)^{\frac{1}{2}}, \quad \er_{p,T} = \left(\sum\limits_{i=1}^{N_{int}} w_i|\res_{\bu,\theta^{\ast}}(y_i)|^2 + \sum\limits_{i=1}^{N_{int}} w_i|\res_{div,\theta}(y_i)|^2\right)^{\frac{1}{2}},
\end{equation}
in the following lemma,
\begin{lemma}
\label{lem:st1} 
For $\f \in C^{k-2}(D;\R^d), f_d \in C^{k-1}(D)$ and $\g \in C^k(D^{\prime})$, with $k \geq 2$. Let $\bu \in H^1(D;\R^d)$ and $p \in H^1(D)$ be the solution of the inverse problem corresponding to the Stokes' equations \eqref{eq:st} i.e, they satisfy \eqref{eq:wkst} for all test functions $\bv \in H^1_0(D;\R^d),w\in L^2(D)$ and satisfies the data \eqref{eq:dtst}. Let $\bu^{\ast} = \bu_{\theta^{\ast}} \in C^k(D;\R^d), p^{\ast} = p_{\theta^{\ast}} \in C^k(D)$ be a PINN generated by the algorithm \ref{alg:PINN}, with loss functions \eqref{eq:lf2}, \eqref{eq:lfst}. Let $B_{R_1}(x_0)$ be the largest ball inside $D^{\prime} \subset D$.
Then, the generalization error \eqref{eq:gerst} for balls $B_{R}(x_0) \in D$ with $R > R_1$ is bounded by,
\begin{equation}
    \label{eq:stbd}
    \begin{aligned}
    &\left(\er_G(B_R)\right)^2 \leq \\
    &C \left[ \er_{p,T}^2 + \cost_{\hat{\bu}}^\tau \er_{p,T}^{2(1-\tau)} + \cost_{\hat{\bu}}^{1-\tau} \er_{p,T}^{2(1-\tau)} \er_{d,T}^{2\tau} + C_q N^{-\alpha} + \cost^{\tau}_{\hat{\bu}} C_q^{(1-\tau)}N^{-(1-\tau)\alpha} + \cost^{1-\tau}_{\hat{\bu}} C_q^{(1-\tau)}C_{qd}^{\tau}N^{-(1-\tau)\alpha}N_d^{-\alpha_d \tau} \right]
    \end{aligned}
\end{equation}
with constants $\cost_{\hat{\bu}} = \|\bu\|^2_{L^2(D)} + \|\bu^{\ast}\|^2_{L^2(D)}$, $C_q = C_q\left(\|\res_{\bu,\theta^{\ast}}\|_{C^{k-2}(D)} + \|\res_{div,\theta^{\ast}}\|_{C^{k-1}(D)}\right)$ and $C_{qd} = C_{qd}\left(\|\res_{d,\theta^{\ast}}\|_{C^{k}(D^{\prime})}\right)$, given by the quadrature error bounds \eqref{eq:qassm} and \eqref{eq:dqassm} and $\tau$ given in \eqref{eq:stst}.
\end{lemma}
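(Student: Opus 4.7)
The plan is to mimic the proof strategy used for the previous examples (Lemmas \ref{lem:p1}, \ref{lem:h1}, \ref{lem:w1}): subtract the PINN from the exact solution, observe that the difference solves a Stokes system with source terms equal to the PDE residuals and measurement equal to the data residual, apply the conditional stability estimate from Theorem \ref{thm:st1}, and finally pass from continuous $L^2$-norms of residuals to training errors via the quadrature bounds \eqref{eq:qassm}, \eqref{eq:dqassm}.

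First, I would set $\hat{\bu} = \bu^{\ast} - \bu \in H^1(D;\R^d)$ and $\hat{p} = p^{\ast} - p \in L^2_0(D)$ (after subtracting off the mean of $p^{\ast}$, which does not affect any residual). By linearity of the Stokes operator, the definitions \eqref{eq:resst1}, \eqref{eq:resst2}, \eqref{eq:resdst}, and the weak form \eqref{eq:wkst}, the pair $(\hat{\bu}, \hat{p})$ satisfies the weak Stokes system with body force $\res_{\theta^{\ast},\bu}$, divergence source $\res_{\theta^{\ast},\mathrm{div}}$, and with $\hat{\bu}|_{D^{\prime}} = \res_{d,\theta^{\ast}}$. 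Since $\bu^{\ast},\hat{\bu} \in C^k(D) \subset H^1(D)$ by hypothesis, all the regularity requirements of Theorem \ref{thm:st1} are met.

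Next, I would apply Theorem \ref{thm:st1} to $\hat{\bu}$ with the largest ball $B_{R_1}(x_0) \subset D^{\prime}$ and $R_2 = R$; this yields
\begin{equation*}
\|\hat{\bu}\|^2_{L^2(B_R(x_0))} \leq C\Bigl( \cost_{\res} + \cost_{\hat{\bu}}^{\tau}\cost_{\res}^{1-\tau} + \cost_{\res}^{1-\tau}\cost_{\hat{\bu}}^{1-\tau}\|\res_{d,\theta^{\ast}}\|_{L^2(D^{\prime})}^{2\tau} \Bigr),
\end{equation*}
where $\cost_{\res} := \|\res_{\bu,\theta^{\ast}}\|^2_{L^2(D)} + \|\res_{\mathrm{div},\theta^{\ast}}\|^2_{L^2(D)}$ and $\cost_{\hat{\bu}} := \|\hat{\bu}\|^2_{L^2(D)}$. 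I would then bound $\cost_{\hat{\bu}} \leq 2(\|\bu\|^2_{L^2(D)} + \|\bu^{\ast}\|^2_{L^2(D)})$ by the triangle inequality, absorbing the factor of $2$ into the generic constant.

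Finally, I would convert the continuous residual norms into training errors. Observing that the quadrature sum defining $\er_{p,T}^2$ in \eqref{eq:etrnst} approximates exactly $\cost_{\res}$ under the rule \eqref{eq:quad}, while $\er_{d,T}^2$ approximates $\|\res_{d,\theta^{\ast}}\|^2_{L^2(D^{\prime})}$ under \eqref{eq:dqd}, the assumed smoothness $\f,\g,f_d \in C^{k-1}$ or better ensures $\res_{\bu,\theta^{\ast}},\res_{\mathrm{div},\theta^{\ast}},\res_{d,\theta^{\ast}}$ lie in the regularity classes required by \eqref{eq:qassm}, \eqref{eq:dqassm}. Therefore
\begin{equation*}
\cost_{\res} \leq \er_{p,T}^2 + C_q N_{int}^{-\alpha}, \qquad \|\res_{d,\theta^{\ast}}\|_{L^2(D^{\prime})}^{2} \leq \er_{d,T}^2 + C_{qd} N_d^{-\alpha_d}.
\end{equation*}
Substituting into the stability bound and using the subadditivity $(a+b)^{\sigma} \leq a^{\sigma} + b^{\sigma}$ for $\sigma \in (0,1]$ to distribute each of the powers $\tau$ and $1-\tau$ across the quadrature corrections gives exactly the form \eqref{eq:stbd}, after collecting constants. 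The main obstacle I anticipate is purely bookkeeping: the stability estimate \eqref{eq:stst} mixes the source and solution norms in three different nonlinear ways, so one must keep careful track of how the subadditive splittings interact with the $\cost_{\hat{\bu}}^{\tau}$ and $\cost_{\hat{\bu}}^{1-\tau}$ prefactors to arrive at precisely the listed six terms; no genuinely new idea beyond the template of Lemmas \ref{lem:p1}--\ref{lem:w1} is needed.
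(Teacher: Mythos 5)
Your proposal is correct and follows essentially the same route as the paper's proof: form the differences $\hat{\bu},\hat{p}$, note they solve the Stokes system with the residuals as sources and the data residual as observation, apply Theorem \ref{thm:st1}, and pass from continuous residual norms to training errors via \eqref{eq:qassm}, \eqref{eq:dqassm}. Your added care about normalizing $\hat{p}$ to zero mean and the explicit use of subadditivity to distribute the exponents are details the paper leaves implicit, but they do not change the argument.
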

\begin{proof}
For notational simplicity, we denote $\res_{\bu} = \res_{\bu,\theta^{\ast}}, \res_{div} = \res_{div,\theta^{\ast}}$ and $\res_d = \res_{d,\theta^{\ast}}$. 

Define $\hat{\bu} = \bu^{\ast} - \bu \in H^1(D;\R^d)$ and $\hat{p} = p^{\ast} - p \in H^1(D)$, by linearity of the differential operator in \eqref{eq:st} and the data observable in \eqref{eq:dtst} and by definitions \eqref{eq:resst1}, \eqref{eq:resst2}, \eqref{eq:resdst}, we see that $\hat{\bu},\hat{p}$ satisfy,
\begin{equation}
    \label{eq:hst}
    \begin{aligned}
    \Delta \hat{\bu} + \nabla \hat{p} &= \res_{\bu}, \quad \forall x \in D, \\
    \div(\hat{\bu}) &= \res_{div}, \quad \forall x \in D, \\
    \hat{\bu} &= \res_d, \quad \forall x \in D^{\prime},
    \end{aligned}
\end{equation}
with Stokes' equation being satisfied in the sense of  \eqref{eq:wkst}. 

Hence, we can directly apply the conditional stability estimate \eqref{eq:stst} to obtain,
\begin{equation}
    \label{eq:plst1}
    \er^2_G(B_R) \leq C\left(\cost_{\res} + \cost_{\hat{\bu}}^\tau \cost_{\res}^{1-\tau} + \left(\cost_{\res}^{1-\tau}\cost_{\hat{\bu}}^{1-\tau}\right)\|\res_{d}\|_{L^2(D^{\prime})}^{2\tau}\right),
\end{equation}
with constants,
\begin{equation}
\label{eq:plst2}
\cost_{\res} = \|\res_{\bu}\|^2_{L^2(D)} + \|\res_{div}\|^2_{L^2(D)}, \quad \cost_{\hat{\bu}} = \|\bu\|^2_{L^2(D)} + \|\bu^{\ast}\|_{L^2(D)}.
\end{equation}
By observing that $\er_{d,T}^2$ is the quadrature approximation of $\|\res_{d}\|_{L^2(D^{\prime})}^{2}$ with the quadrature rule \eqref{eq:dqd} and $\er_{p,T}^2$ is the quadrature approximation of $\cost_{\res}$ with the quadrature ruel \eqref{eq:quad}, we can directly apply \eqref{eq:qassm} and \eqref{eq:dqassm} to obtain the desired inequality \eqref{eq:stbd}.
\end{proof}
\begin{remark}
The bound \eqref{eq:stbd} is more complicated than the corresponding generalization error estimates for the Poisson, Heat and Wave equations. Nevertheless, the same general structure can be observed. There are two types of terms, one based on the training errors $\er_{p,T},\er_{d,T}$, which can be computed a posteriori and should be made small during the training process. The other set of terms are decreasing in the number $N_{int},N_d$ of training samples. Thus the total error can be reduced by increasing the number of training points, while keeping the resulting training errors low.
\end{remark}
\begin{remark}
We observe that the generalization error in \eqref{eq:stbd} is given in terms of balls. Arbitrary sets can be readily included by considering the estimate in a union of balls containing that set. Moreover, the generalization error \eqref{eq:gerst} only considers errors in the velocity field. In principle, this estimate can be used to bound pressure errors by working with the corresponding pressure Poisson equation. 
\end{remark}

\begin{figure}[h!]
\centering

    \begin{subfigure}{.3\textwidth}
        \centering
        \includegraphics[width=1\linewidth]{{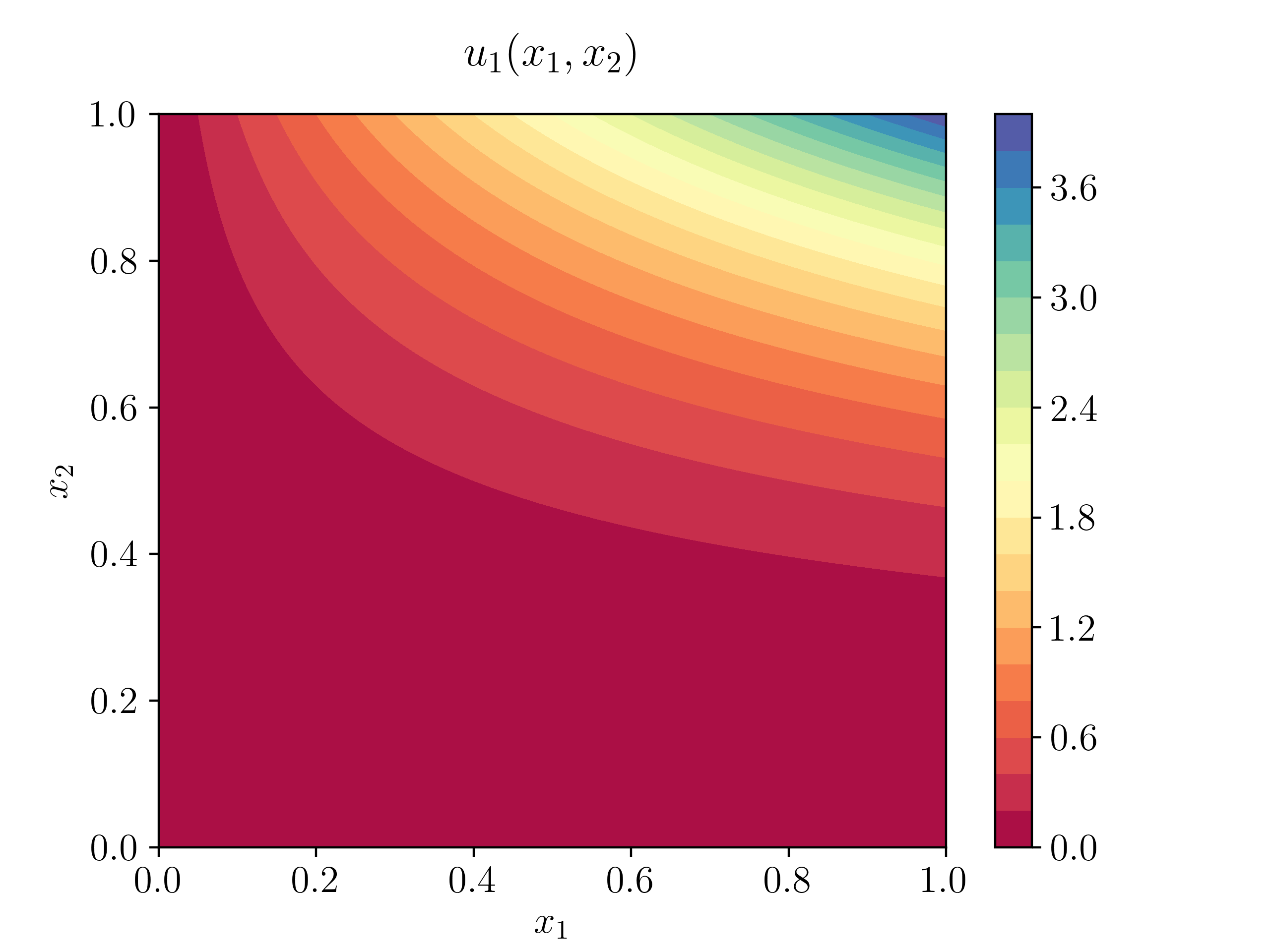}}
        \caption{Exact solution $\bu_1$}
    \end{subfigure}
      \begin{subfigure}{.3\textwidth}
        \centering
        \includegraphics[width=1\linewidth]{{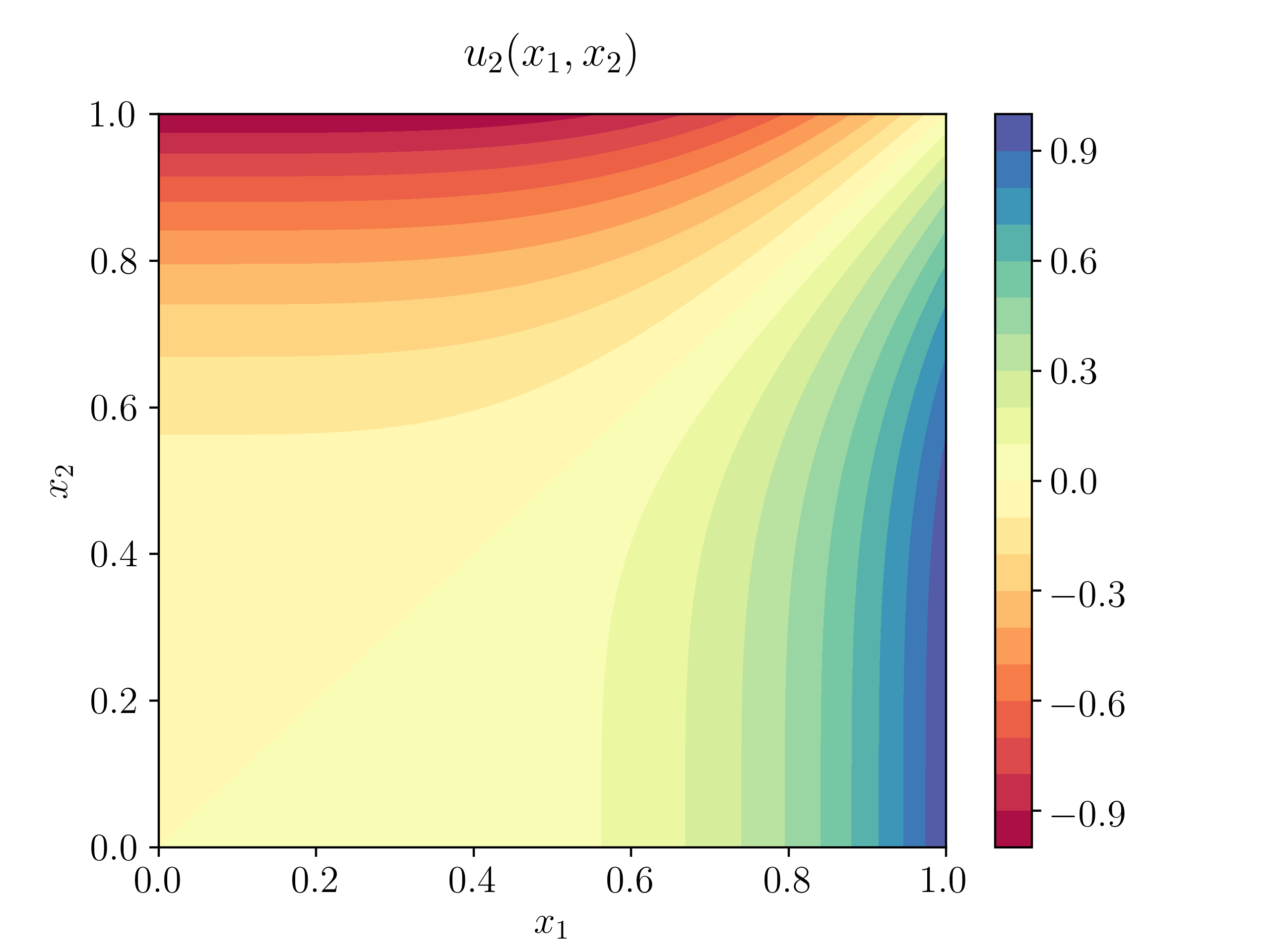}}
        \caption{Exact solution $\bu_2$}
    \end{subfigure}
    \begin{subfigure}{.3\textwidth}
        \centering
        \includegraphics[width=1\linewidth]{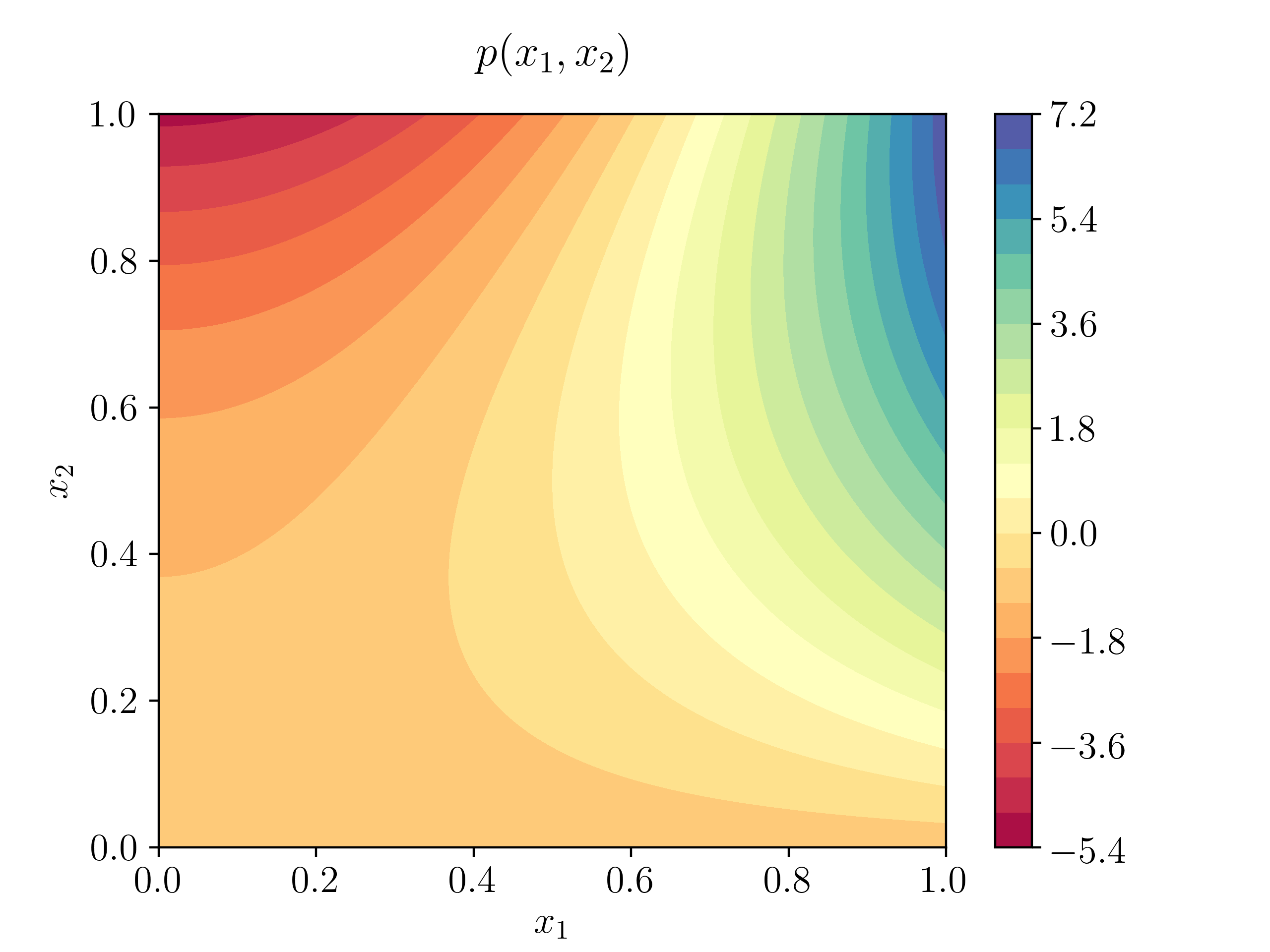}
        \caption{Exact solution $p$}
    \end{subfigure}
    
     \begin{subfigure}{.3\textwidth}
        \centering
        \includegraphics[width=1\linewidth]{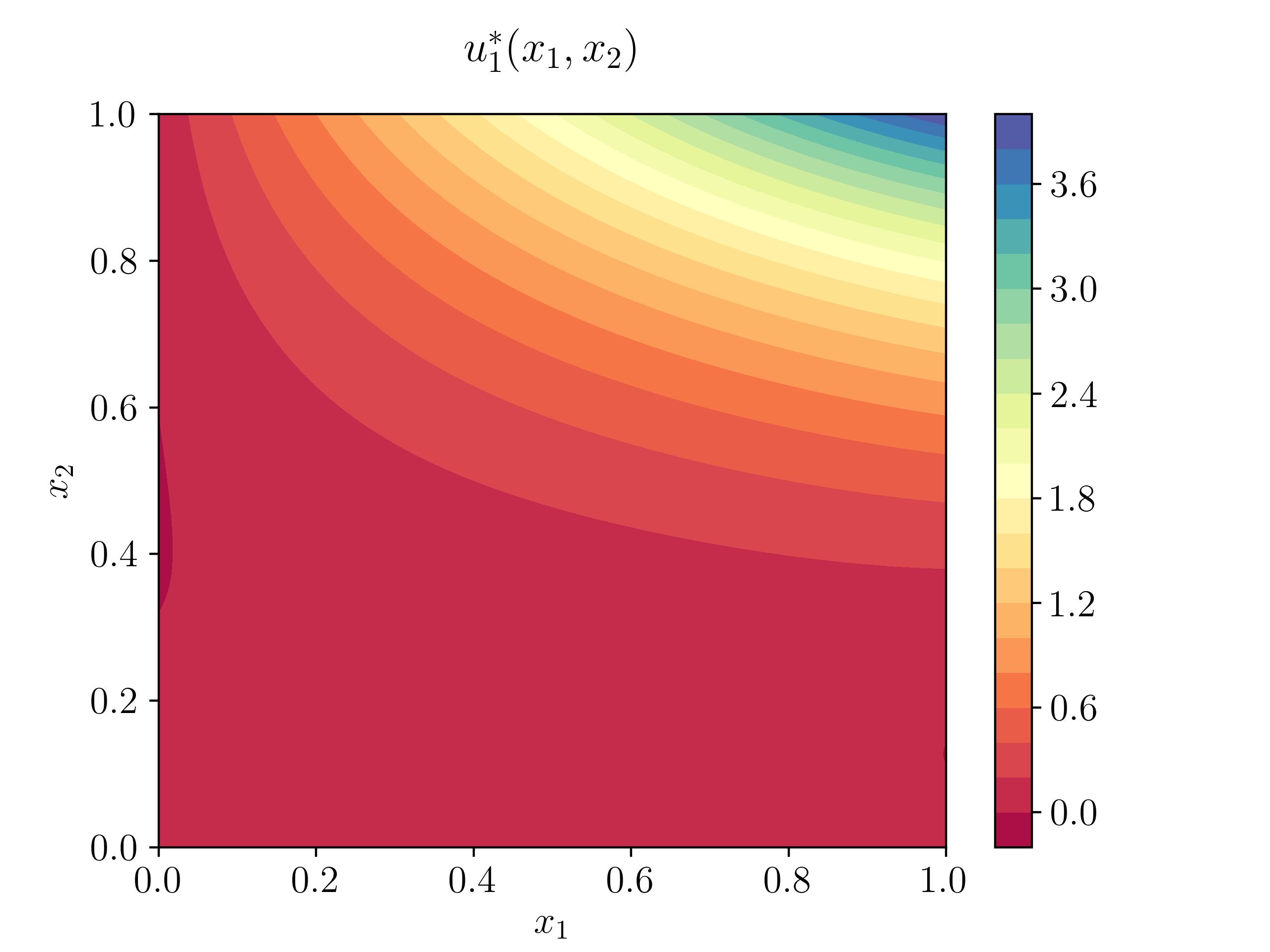}
        \caption{PINN $\bu_1^{\ast}$}
    \end{subfigure}
    \begin{subfigure}{.3\textwidth}
        \centering
        \includegraphics[width=1\linewidth]{{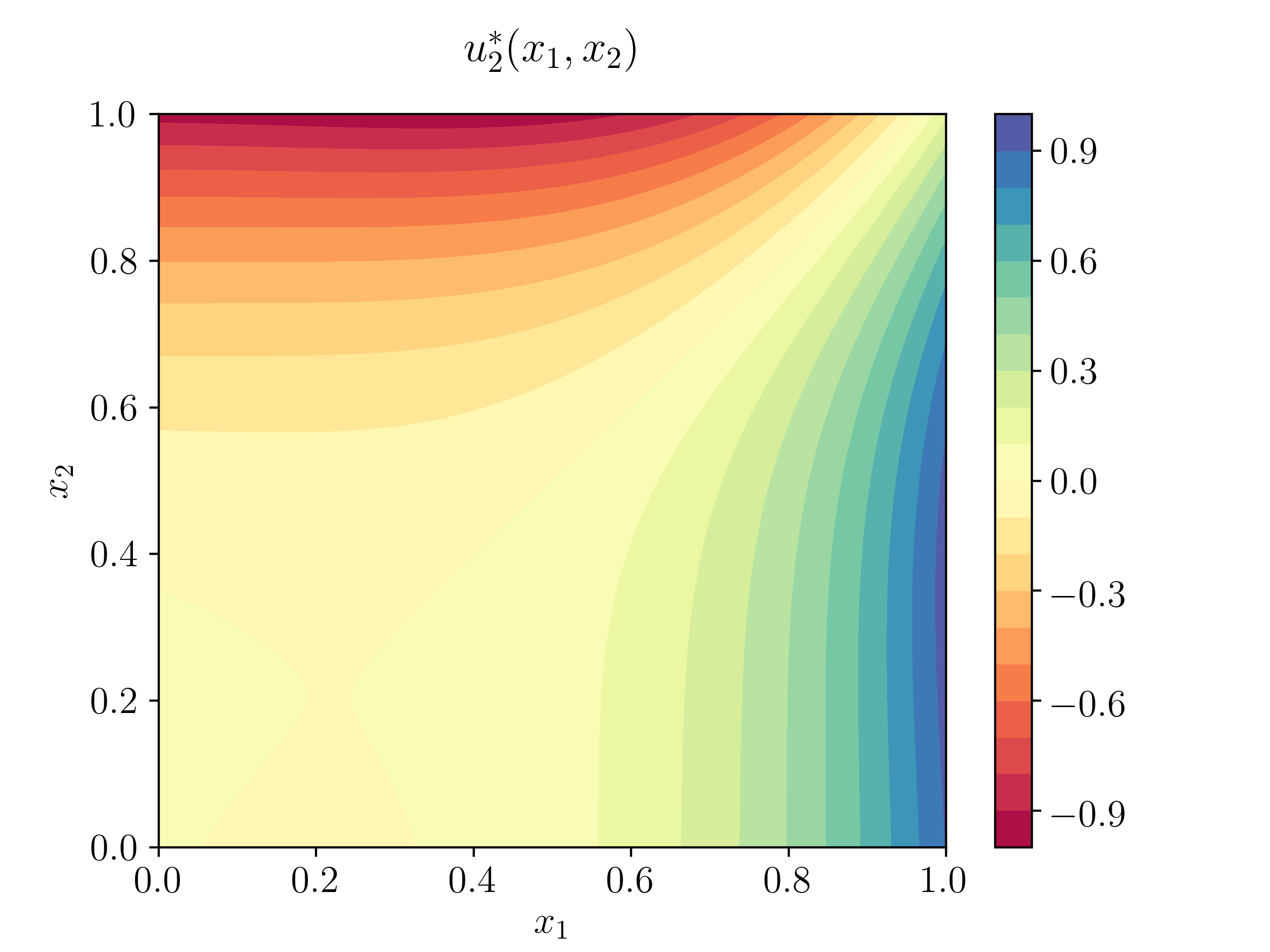}}
        \caption{PINN $\bu_2^{\ast}$}
    \end{subfigure}
    \begin{subfigure}{.3\textwidth}
        \centering
        \includegraphics[width=1\linewidth]{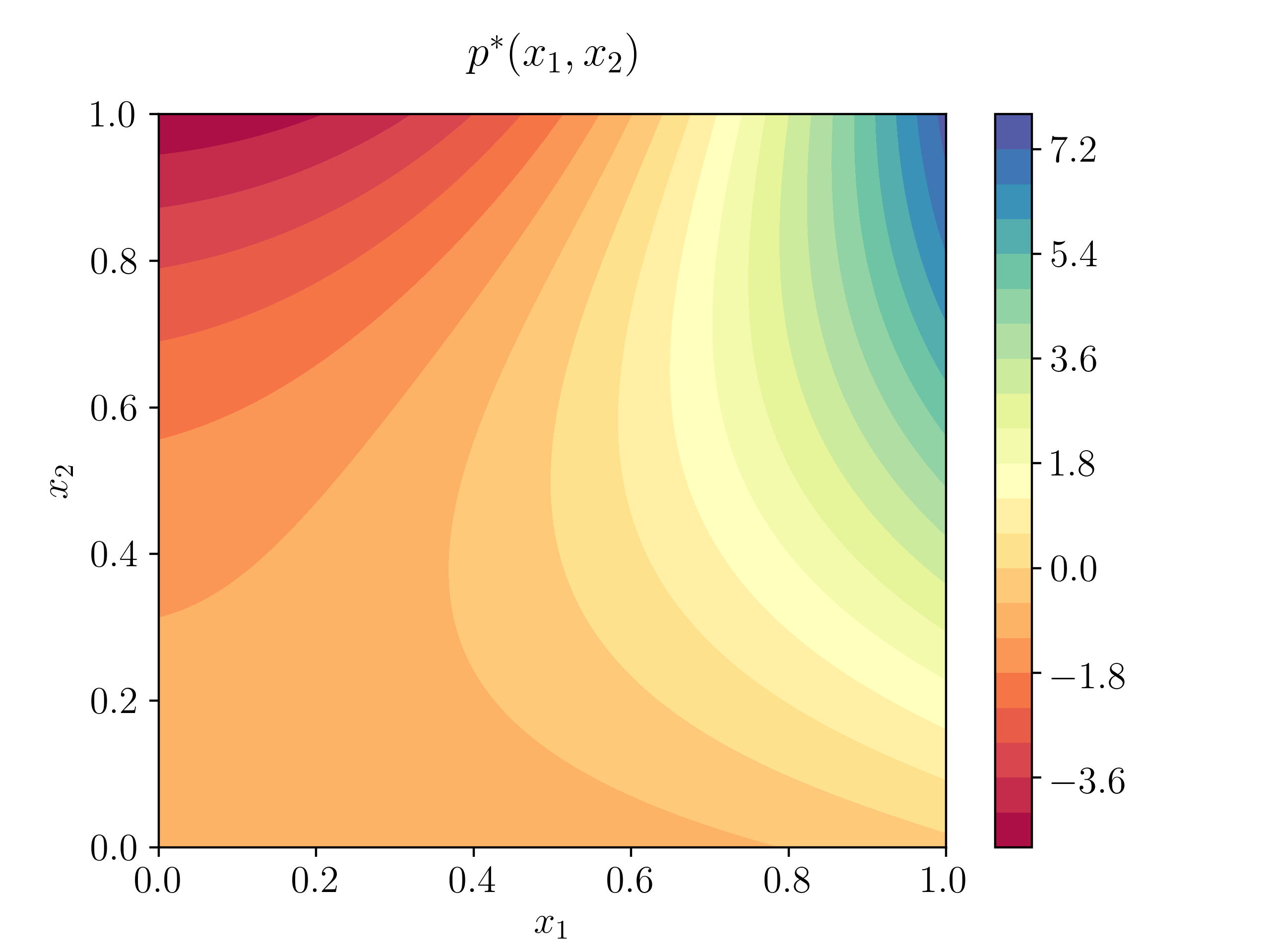}
        \caption{PINN $p^{\ast}$}
    \end{subfigure}
    
\caption{Comparison of the Exact solution and the PINN approximations  for the inverse problem for the Stokes equation, with $N = 20 \times 20$ training points.}
\label{fig:st2}
\end{figure}
\subsection{Numerical experiments}
We follow \cite{BH1} and consider the homogeneous version of Stokes equation i.e $\f,f_d \equiv 0$, in the two-dimensional domain $(0,1)^2$. We consider the exact solutions,
\begin{equation}
\label{eq:stex}
    \bu(x_1,x_2)=(4x_1x_2^3, x_1^4-x_2^4), \quad p(x_1,x_2)=12x_1^2x_2 - 4x_2^3 - 1,
\end{equation}
and define the data term $\g = \bu|_{D^{\prime}}$ on the sub-domain,
\begin{equation}
\label{eq:stdom}
D^{\prime} = \{(x_1,x_2)\in \R^2 : \sqrt{(x_1-0.5)^2 + (x_2-0.5)^2} <0.25\}.
\end{equation}
The domain $D$ and the sub-domain $D^{\prime}$ are illustrated in figure \ref{fig:st1}. We chose Sobol points as the training set $\train_{int}$ and  equally spaced (Cartesian) radial points as the training set $\train_{d} \subset D^{\prime}$. See figure \ref{fig:st1} for an illustration of these training sets. 

With these training sets, we run algorithm \ref{alg:PINN} with loss function \eqref{eq:lf2}, \eqref{eq:lfst} to obtain PINNs approximating the data assimilation problem for the Stokes equation \eqref{eq:st}, \eqref{eq:dtst}. The results with a PINN, trained with $N=40\times40$ and $N_d = rN $ points, with $r = {\rm meas}(D^{\prime})/{\rm meas}(D)$, are shown in figure \ref{fig:st2}, where we plot the exact and PINNs solutions for the velocity components $\bu_1$, $\bu_2$ and pressure $p$. We see from this figure that the PINN is approximating the exact solution quite well, at least to the eye. 

In order to quantify the approximation abilities of PINN for this example, in Table \ref{tab:st}, we present the training errors and relative percentage errors of $\|\bu-\bu^{\ast}\|_{L^2(D)}$ and $\|p-p^{\ast}\|_{L^2(D)}$. From this table, we observe a slow, yet consistent, decay of the $L^2$-errors of the velocity field. This is consistent with the estimate \eqref{eq:stbd} on the generalization error. More surprisingly, we also observe from the Table \ref{tab:st} that the pressure errors are reasonably small and decay with increasing numbers of training samples. Note that we did not provide any estimate on pressure errors, nor was pressure observed in the data in \eqref{eq:dtst}. Yet, given that pressure is a Lagrange multiplier for the Stokes equations and we can control pressure in terms of the velocity through the pressure Poission equations, we observe good approximation for the pressure by PINNs. If the pressure was also specified in the observation domain $D^{\prime}$, the data residual \eqref{eq:resdst}, and the loss function \eqref{eq:lfst} can de readily modified and the resulting PINN is likely to lead to a better approximation of the pressure.

However, the amplitude of the error, for both the velocity field and the pressure,  is higher than the other three equations that we considered earlier in the paper. This is not unexpected as the observation domain $D^{\prime}$ is smaller in this case (see figure \ref{fig:st1}) and has approximately $20\%$ of the area of the whole domain $D$. To be able to reconstruct the velocity and pressure fields with reasonable accuracy from such a small observation domain is significant. Note that we have used the best retraining i.e, the PINN with the smallest training loss among the different random initializations of the optimizer. Choosing the average over all the retrainings led to slightly greater generalization error. 
\begin{table}[htbp] 
    \centering
    \renewcommand{\arraystretch}{1.1} 
    
    \footnotesize{
       \begin{tabular}{ c c c c  c c c c } 
            \toprule
            $N$  &\bfseries $K-1$ & \bfseries $\tilde{d}$  &$\lambda_{reg}$ &\bfseries $\lambda$&  $\er_T$ &  $||\bu - \bu^\ast||_{L^2}$   &   $||p - p^\ast||_{L^2}$ \\ 
            \midrule
             \midrule 
            $20\times20$   & 4 & 24&0.0&0.001 & 0.0007 & 2.3 \% & 5.6 \%  \\
            \midrule
            $40\times40$  &4 & 24&0.0&0.001 & 0.0004& 1.7 \%   &4.0 \%\\
            \midrule 
            $80\times80$   & 4 & 20&0.0&0.01 &0.00046  &  1.6 \%  &3.5 \% \\
         
            \bottomrule
        \end{tabular}
        \caption{Stokes equation: Relative percentages errors for different numbers of training points.}
        \label{tab:st}
    }
\end{table}

\section*{Code} The building and the training of PINNs, together with the ensemble training for the selection of the model hyperparameters, are performed with a collection of Python scripts, realized with the support of PyTorch \url{https://pytorch.org/}. The scripts can be downloaded from \url{https://github.com/mroberto166/PinnsSub}.

\section{Discussion}
\label{sec:7}
Inverse problems for PDEs are very challenging computationally on account of the difficulties of finding stable regularizations and possibly high computational cost. Physics informed neural networks have recently been very successfully applied to efficiently simulate the solutions of several inverse problems for (nonlinear) PDEs, \cite{KAR2,KAR4,KAR6} and references therein. However, rigorous justifications are currently lacking. 

Our main aim in this paper was to provide a rigorous justification for using PINNs to approximate the solutions of a large class of inverse problems for PDEs. To this end, we focussed on the so-called \emph{data assimilation} or \emph{unique continuation} inverse problems in this paper. For these problems, the inputs necessary for the forward problem of the underlying PDE (for instance the abstract pde \eqref{eq:pde}), such as initial and boundary conditions are unknown. However, one has access to (possibly noisy) observables of the solution (\eqref{eq:dt} for the abstract pde \eqref{eq:pde}), on the so-called \emph{observation domain}, which a subset of the underlying domain. Such data assimilation problems arise in many applications, particularly in geophysics and meteorology. 

In section \ref{sec:2}, we posed the data assimilation inverse problem for the abstract PDE \eqref{eq:pde}, \eqref{eq:dt}. The corresponding algorithm \ref{alg:PINN} to generate a PINN for approximating this inverse problem was proposed. Key ingredients in this algorithm included using quadrature points, corresponding to underlying quadrature rules \eqref{eq:quad} as \emph{training points}, and the (strong form of) PDE residual \eqref{eq:res1}, collocated at these training points as the loss function \eqref{eq:lf1}, for training the neural network. Under the assumption that solutions to the underlying inverse problem \eqref{eq:pde}, \eqref{eq:dt}, satisfy a \emph{conditional stability estimate} \eqref{eq:assm}, we were able to prove a rigorous upper bound on the generalization (approximation) error \eqref{eq:egen} of the PINN, in terms of the (computable) training error \eqref{eq:etrn} and the number of training samples (with a rate prescribed by the quadrature errors \eqref{eq:qassm},\eqref{eq:dqassm}). Thus, we are able to leverage the conditional stability estimate on the inverse problem for the underlying PDE into a bound on the generalization error for the approximating PINN. 

We illustrate this abstract framework for concrete examples for linear PDEs that arise in a wide variety of models. We consider the Poisson, heat, wave and Stokes equations in this paper. All these PDEs possess conditional stability estimates for the underlying data assimilation inverse problem, proved either by the well-known \emph{three balls inequalities} or \emph{Carleman estimates}. We adapt the abstract formalism to each example and provide concrete estimates on the generalization error. Numerical experiments are presented and validate the proposed theory for each of the linear PDEs considered here. We observe from the numerical experiments that PINNs are very efficient at approximating the underlying inverse problem. The resulting errors are very small and are less than $1\%$, even for a few training samples.

It is instructive at this point to compare the results obtained with PINNs to those with other methods. We select stabilized non-confirming finite element methods, such as the ones presented in \cite{BO1,BO2,BO3,BH1} and references therein. In these papers, the authors also employed conditional stability estimates to obtain rigorous error estimates for the finite element method. Comparing our numerical results to these papers, we notice that the PINNs were comparable in terms of the amplitude of the error, when compared to finite element methods on fine meshes. On the other hand, PINNs are extremely simple to program, with the same level of code complexity for both the forward and the inverse problem \cite{KAR2}. Moreover, PINNs are able to obtain very small errors with a training time of less than $1-2$ minutes for most examples presented here. Given their simplicity and efficiency, we believe PINNs constitute a promising alternative to available methods for the data assimilation problem. Another interesting finding was the ability of PINNs to solve inverse problems in very high dimensions. We illustrated this performance for the heat equation in domains upto $100$ space dimensions and observed that PINN, based on random training points, were able to approximate the solution of the inverse problem to high accuracy and without incurring the \emph{curse of dimensionality}. 

Summarizing, we are able to provide a unified framework and to the best of our knowledge, \emph{the first rigorous justification} for using PINNs in the context of data assimilation inverse problems for PDEs. 

The current paper, as well as the general methodology of PINNs, does have a few shortcomings that we point out below,
\begin{itemize}
    \item The bound \eqref{eq:egenb} on the generalization error \eqref{eq:egen} is in terms of the training error. We do not estimate this error here. However, this is standard in machine learning \cite{MLbook2}, where the training error is computed \emph{a posteriori} (see the numerical results for training errors in respective experiments). The training error stems from the solution of a non-convex optimization problem in very high dimensions and there are no robust estimates on this problem. Once such estimates become available, they can be readily incorporated in the rhs of \eqref{eq:egenb}. For the time being, the estimate \eqref{eq:egenb} should be interpreted as \emph{as long as the PINN is trained well, it generalizes well}. This is indeed borne out by the results of the numerical experiments.
    \item A particular manifestation of this issue with the training error was encountered in some of the numerical experiments, where the training error reached a very low value, even for a few training samples. Adding further samples led to a saturation of the already low generalization error as the training error could not be reduced further. If this happens, one needs to either change the architecture of the neural network, for instance by using residual or convolutional neural networks or change the optimizer or use standard tricks in machine learning such as batch normalization and dropout to further reduce the training and generalization errors.
    \item Although our abstract formalism was very general and covered nonlinear PDEs, we only considered concrete examples of linear PDEs in this paper. This is largely on account of simplicity and the ready availability of conditional error estimates. On the other hand, some of the striking success of PINNs in the context of inverse problems, has been for nonlinear PDEs such as the Navier-Stokes equations \cite{KAR4} and even the compressible Euler equations \cite{KAR6}. Although some Carleman estimates, leading to conditional stability bounds, are available for the Navier-Stokes equations \cite{Im2}, applying them to derive bounds on the generalization error is very challenging and will be considered in a forthcoming paper. However, extending our results to other linear PDEs such as Maxwell's equation, Helmholtz equation, advection-difusion equations and even the linearized Navier-Stokes equations, where conditional stability estimates of the form \eqref{eq:assm} are available, is relatively straightforward. 
    \item We have focused on the \emph{deterministic data assimilation problem} here. In practice, data in the observation domain \eqref{eq:dt} is \emph{noisy}. For small amplitude noise, the estimates can be readily extended to bound the generalization error in \eqref{eq:egenbn} and the efficiency of PINNs in this regime is validated in a numerical experiment for the Poisson's equation with noisy data. However, dealing with data, perturbed by larger amplitudes of noise, necessitates a Bayesian framework, such as the one considered in \cite{KAR9}. Deriving rigorous estimates on the generalization error in a Bayesian framework will be considered in a future paper.

\end{itemize}

\section*{Acknowledgements.} The research of SM and RM was partially supported by European Research Council Consolidator grant ERCCoG 770880: COMANFLO.

\bibliographystyle{abbrv}
\bibliography{MM2refs}

\end{document}